\renewcommand{\@listI}%
{\leftmargin=\parindent
\partopsep=0pt
\parskip=-5pt
\topsep=3pt
\itemsep=3pt
\labelwidth=\leftmargini }
\makeatletter \@addtoreset {equation}{section}
\newtheorem{theorem}{Theorem}[section]
\newtheorem{lemma}[theorem]{Lemma}
\newtheorem{corollary}[theorem]{Corollary}
\theoremstyle{remark}
\newtheorem{example}{Example}[section]
\newtheorem{remark}{Remark}[section]
\newtheorem{assumption}{Assumption}[section]
\theoremstyle{definition}
\newcommand{\const}{{\mathop{\rm const}\nolimits}}
\newcommand{\myp}{\mbox{$\:\!$}}
\newcommand{\mypp}{\mbox{$\;\!$}}
\newcommand{\myn}{\mbox{$\;\!\!$}}
\newcommand{\mynn}{\mbox{$\:\!\!$}}
\newcommand{\PP}{P}
\newcommand{\QQ}{Q}
\newcommand{\EE}{E}
\newcommand{\Var}{\mathrm{Var}}
\newcommand{\Cov}{\mathrm{Cov}}
\renewcommand{\det}{\mathop{\rm det}\nolimits}
\newcommand{\supp}{\mathop{\rm supp}\nolimits}
\newcommand{\bfOne}{\mathbbm{1}}
\newcommand{\NN}{\mathbb{N}}
\newcommand{\ZZ}{\mathbb{Z}}
\newcommand{\RR}{\mathbb{R}}
\newcommand{\CC}{{\mathbb C}}
\newcommand{\calH}{{\mathcal H}}
\newcommand{\CP}{\varPi}
\newcommand{\calX}{\mathcal{X}}
\newcommand{\rme}{{\mathrm e}}
\newcommand{\rmi}{{\mathrm i}}
\newcommand{\dif}{{\mathrm d}}
\newcommand{\D}{D}
\newcommand{\topp}{{\mbox{\tiny$\top$}}}
\begin{document}
\begin{center}
{\LARGE Limit shape of random
convex polygonal 
lines:\\
Even more universality

}

\vspace{1.1pc} {\large Leonid V.\ Bogachev}
\\[1pc]
\textrm{Department of Statistics, School of Mathematics, University
of Leeds, Leeds LS2 9JT, UK.}\\
\textrm{E-mail:} {\tt L.V.Bogachev@leeds.ac.uk}
\end{center}

\vspace{.5pc} \hfill \textit{To the memory of\/
Yu.\,V.~Prokhorov}\hspace{1pc}


\noindent
\begin{abstract}
The paper concerns the limit shape (under some probability measure)
of convex polygonal lines with vertices on $\ZZ_+^2$\myp, starting
at the origin and with the right endpoint $n=(n_1,n_2)\to\infty$. In
the case of the uniform measure, an explicit limit shape
$\gamma^*:=\{(x_1,x_2)\in\RR_+^2\myp\colon
\sqrt{1-x_1}+\sqrt{x_2}=1\}$ was found independently by Vershik
(1994),
B\'ar\'any (1995),
and Sinai (1994).
Recently, Bogachev and Zarbaliev (2011)
proved that the limit shape $\gamma^*$ is universal for a certain
parametric family of multiplicative probability measures
generalizing the uniform distribution. In the present work, the
universality result is extended to a much wider class of
multiplicative measures, including (but not limited to) analogs of
the three meta-types of decomposable combinatorial structures ---
multisets, selections, and assemblies. This result is in sharp
contrast with the one-dimensional case where the limit shape of
Young diagrams associated with integer partitions heavily depends on
the distributional type.

\medskip \noindent
\emph{Keywords}: Convex lattice polygonal line; Limit shape;
Multiplicative measures; Local limit theorem; M\"obius inversion
formula; Generating function; Cumulants

\medskip \noindent \emph{2010 MSC}:
Primary 52A22; Secondary 05A17, 05D40, 60F05, 60G50
%
%
%
%
\end{abstract}

\tableofcontents

\vspace{1.5mm}

\normalfont

\section{Introduction}\label{sec1}

In this paper, a \textit{convex lattice polygonal line} $\varGamma$
is understood as a piecewise linear continuous path on the plane
starting at the origin $0=(0,0)$, with vertices on the
two-dimensional integer lattice $\ZZ^2$ and such that the
inclination of its consecutive edges is strictly increasing, staying
between $0$ and $\pi/2$ (clearly, any such $\varGamma$ lies within
the first coordinate quadrant). Let $\CP$ be the set of all convex
lattice polygonal lines \emph{with finitely many edges}, and denote
by $\CP_{n}\subset\CP$ the subset of polygonal lines
$\varGamma\in\CP$ with the right endpoint
$\xi_\varGamma=(\xi_1,\xi_2)$ \emph{fixed} at
$n=(n_1,n_2)\in\ZZ_+^2:=\{(k_1,k_2)\in\ZZ^2\colon k_j\ge 0\}$.

If each space $\CP_n$ is endowed with a probability measure $\PP_n$,
respectively (e.g., a uniform measure making all $\varGamma\in\CP_n$
equiprobable), then one can speak of \emph{random polygonal lines},
and it is of interest to study their asymptotic statistics as
$n\to\infty$ (say, assuming that $n_2/n_1\to c\in(0,\infty)$). In
particular, the \emph{limit shape} of random polygonal lines,
whenever it exists, is defined as a planar curve $\gamma^*$ such
that, for any $\varepsilon>0$,
\begin{equation}\label{eq:LLN}
\lim_{n\to\infty}\PP_n\{\varGamma\in\CP_n\colon
d\myp(\tilde{\varGamma}_n,\gamma^*)\le\varepsilon\}=1,
\end{equation}
where $\tilde{\varGamma}_n:=\mathfrak{s}_n(\varGamma)$, with a
suitable scaling transform $\mathfrak{s}_n\colon\RR^2\to\RR^2$, and
$d(\cdot,\cdot)$ is some metric on the path space, for instance
induced by the Hausdorff distance between compact sets,
\begin{equation}\label{eq:dH}
d_{\mathcal H}(A,B):=\max\myn \Bigl\{\max_{x\in A}\min_{y\in
B}|x-y|, \,\max_{y\in B}\min_{x\in A}|x-y|\Bigr\},
\end{equation}
where $|\,{\cdot}\,|$ is the Euclidean vector norm in $\RR^2$.

\begin{remark}
By definition, for a polygonal line $\varGamma\in\CP_n$ the vector
sum of its consecutive edges equals $n=(n_1,n_2)$; due to the
convexity property, the order of parts in the sum is determined
uniquely. Hence, any such $\varGamma$ represents a (two-dimensional)
integer partition of $n\in\ZZ_+^2$ which is \textit{strict} (i.e.,
without proportional parts; see \cite[\S\myp3]{V1}). Let us remark
that for ordinary (one-dimensional) integer partitions the limit
shape problem is set out differently, in terms of the associated
\emph{Young diagrams} \cite{V3,Bogachev,Yakubovich}.
\end{remark}

The limit shape and its very existence may depend on the family of
probability laws $\PP_n$\myp. With respect to the \emph{uniform}
distribution on $\CP_n$\myp, the problem was solved independently by
Vershik \cite{V1}, B\'ar\'any \cite{Barany} and Sinai \cite{Sinai},
who showed that under the natural scaling
\begin{equation}\label{eq:scaling}
\mathfrak{s}_n\colon(x_1,x_2)\mapsto (n_1^{-1}x_1,\myp
n_2^{-1}x_2),\qquad n=(n_1,n_2),\ \ n_1,n_2>0,
\end{equation}
and with respect to the Hausdorff metric $d_\calH$, the limit
\eqref{eq:LLN} holds with the limit shape $\gamma^*$ given by a
parabola arc
\begin{equation}\label{eq:gamma0}
\sqrt{1-x_1}+\sqrt{x_2}=1,\qquad 0\le x_1,x_2\le1.
\end{equation}

Recently, Bogachev and Zarbaliev \cite{BZ-DAN,BZ4} considered the
limit shape problem for a more general class of ``multiplicative''
measures $\{\PP_n\}$ of the form
\begin{equation}\label{eq:P-r}
\PP_n(\varGamma):=\frac{b(\varGamma)}{\displaystyle B_n}
  \myp,\qquad\varGamma\in\CP_n,
\end{equation}
with
\begin{equation}\label{eq:b-Gamma}
 b(\varGamma):=\prod_{e_i\in\varGamma} b_{k_i},\qquad B_n:={}\sum_{\varGamma\in\CP_n}
  b(\varGamma),
\end{equation}
where the product is over all edges $e_i$ of $\varGamma\in\CP_n$,
index $k_i$ equals the number of lattice points on the edge $e_i$
except its left endpoint, and $\{b_k\}$ is a given nonnegative real
sequence. Specifically, it has been proved in \cite{BZ-DAN,BZ4}
that, under the scaling \eqref{eq:scaling}, the same limit shape
\eqref{eq:gamma0} is valid for a parametric class of measures
$\PP_n=\PP_n^{(r)}$ \,($0<r<\infty)$ with the
coefficients\footnote{\mypp Note that for $r=1$ the formula
\eqref{eq:b-k-r} gives $b_k\equiv 1$, which implies that
$b(\varGamma)=1$ for any $\varGamma\in\CP_n$ and hence the measure
\eqref{eq:P-r} is reduced to the uniform distribution on the space
$\CP_n$.}
\begin{equation}\label{eq:b-k-r}
b_k=b_k^{(r)}:=\binom{r+k-1}{k}=\frac{r(r+1)\cdots(r+k-1)}{k!}\myp.
\end{equation}

This result has provided the first evidence in support of a
conjecture on the \emph{limit shape universality}, put forward
independently by Vershik \cite[Remark 2, p.~20]{V1}\footnote{\mypp
Page reference is given to the English translation of \cite{V1}.}
and Prokhorov \cite{Prokhorov}. The goal of the present paper is to
show that the limit shape $\gamma^*$ given by \eqref{eq:gamma0} is
universal in a much wider class of probability measures of the
multiplicative form \eqref{eq:P-r},~\eqref{eq:b-Gamma}. For
instance, along with the uniform measure on $\CP_n$ this class
contains the uniform measure on the subset $\check{\CP}_n\subset
\CP_n$ of polygonal lines that do not have any lattice points other
than vertices. More generally, measures covered by our method
include (but are not limited to) analogs of the three classical
meta-types of decomposable combinatorial structures
--- multisets, selections, and assemblies
\cite{ABT,AT} (see examples in Section \ref{sec6} below).

\begin{remark}
It should be stressed that our universality result is in sharp
contrast with the one-dimensional case, where the limit shape of
random Young diagrams heavily depends on the distributional type
\cite{Bogachev,EG,V3,Yakubovich}. Thus, the limit shape of (strict)
vector partitions is a relatively ``soft'' property; such a
distinction is essentially due to the different ways of
geometrization used in the two models (i.e., convex polygonal lines
vs.\ Young diagrams), resulting in similar but not identical
functionals responsible for the limit shape (cf.\
\cite[Sec.\,1.1]{Bogachev}).
\end{remark}

Let us state our result more precisely. Using the \emph{tangential
parameterization} of convex paths introduced in
\cite[\S\myp{}A.1]{BZ4}, consider the scaled polygonal line
$\tilde{\varGamma}_n=\mathfrak{s}_n(\varGamma)$
(see~\eqref{eq:scaling}) and let $\tilde\xi_n(t)$ denote the right
endpoint of the part of $\tilde\varGamma$ with the tangent slope
(where it exists) not exceeding $t\in[0,\infty]$. Similarly, the
tangential parameterization of the parabola arc $\gamma^*$
(see~\eqref{eq:gamma0}) is given by\footnote{\mypp It is easy to
check that the coordinate functions $(g^*_1(t),g^*_2(t))$ in
\eqref{eq:g*} satisfy the equation \eqref{eq:gamma0} (and therefore
parametrically define the curve $\gamma^*$) and, furthermore,
$g^{*\prime}_2(t)/g^{*\prime}_1(t)\equiv t$, so that the parameter
$t$ has the meaning of the tangent slope at the corresponding point
on the curve, as required.}
\begin{equation}\label{eq:g*}
g^*\myn(t)=\left(\frac{t^2+2t}{(1+t)^2}\myp,
\frac{t^2}{(1+t)^2}\right), \qquad 0\le t\le\infty,
\end{equation}
with $g^*(\infty):=\lim_{t\to\infty}g^*(t)=(1,1)$. Then the
\textit{tangential distance} between $\tilde{\varGamma}_n$ and
$\gamma^*$ is defined as
\begin{equation}\label{eq:d-T}
d_{\mathcal T}(\tilde{\varGamma}_n,\gamma^*):= \sup_{0\le
t\le\infty}\mynn|\myp\tilde\xi_n(t)-g^*\myn(t)|.
\end{equation}
It is known \cite[\S\myp{}A.1]{BZ4} that \emph{the Hausdorff\/
distance $d_{\mathcal H}$} (see~\eqref{eq:dH}) \emph{is dominated by
the tangential distance $d_{\mathcal T}$}.

A loose formulation of our result about the universality of the
limit shape is as follows.\footnote{\mypp For an exact statement and
its proof, see Theorem~\ref{th:LSP} in Section \ref{sec5.4} below.}
\begin{theorem}\label{th:main1}
Suppose that the family of\/ measures $\PP_n$ on the respective
spaces $\CP_n$ is defined via the multiplicative formulas
\eqref{eq:P-r}, \eqref{eq:b-Gamma} with the coefficients $\{b_k\}$
satisfying some mild technical conditions expressed in terms of\/
the power series expansion of\/ the function\/ $u\mapsto
\ln\mynn\bigl(\sum_{k=0}^\infty b_k u^k\bigr)$. Then, under the
scaling \eqref{eq:scaling}, for any\/ $\varepsilon>0$
\begin{equation*}
\lim_{n\to\infty}\PP_n\{\varGamma\in\CP_n\colon \,d_{\mathcal
T}(\tilde{\varGamma}_n,\gamma^*)\le\varepsilon\}=1.
\end{equation*}
\end{theorem}

\begin{remark}
Universality of the limit shape $\gamma^*$ has its boundaries: as
has been demonstrated by Bogachev and Zarbaliev
\cite{BZ2,BZ_inverse}, any $C^3$-smooth, strictly convex curve
$\gamma$ starting at the origin may serve as the limit shape with
respect to a suitable family of multiplicative probability measures
$\PP_n=\PP_n^{\gamma}$ on $\CP_n$.
\end{remark}

Following \cite{BZ-DAN, BZ4} our proof employs an elegant
probabilistic approach based on randomization and conditioning
(see~\cite{ABT,AT}) first used in the polygonal context by Sinai
\cite{Sinai}. The idea is to randomize the right endpoint
$\xi_\varGamma$ of the polygonal line $\varGamma$, originally fixed
at $n=(n_1,n_2)$, by introducing a probability measure $\QQ_z$ on
the space $\CP=\bigcup_n\myn\CP_n$ (conveniently depending on an
auxiliary ``free'' parameter $z=(z_1,z_2)$, \,$0<z_j<1$), such that
for each $n\in\ZZ_+^2$ the measure $\PP_n$ on $\CP_n$ is recovered
as the conditional distribution
$\PP_n(\cdot)=\QQ_z(\cdot\,|\mypp\CP_n)$. By virtue of the
multiplicativity of $\PP_n$ (see
\eqref{eq:P-r},~\eqref{eq:b-Gamma}), $\QQ_z$ may be constructed as a
\emph{product measure}, under which the coefficients $\{k_i\}$ in
\eqref{eq:b-Gamma} become independent (although not identically
distributed) random variables, so that $\xi_\varGamma$ is
represented as a sum of independent vectors. Thus the asymptotics of
the probability $\QQ_z(\CP_n)=\QQ_z\{\xi_\varGamma=n\}$, needed in
order to return from $\QQ_z$ to $\PP_n$, can be obtained by proving
the corresponding (two-dimensional) local limit theorem. Let us
point out that we find it more convenient to calibrate the parameter
$z$ from the asymptotic equation
$\EE_z(\xi_\varGamma)=n\left(1+o(1)\right)$ as $n\to\infty$, rather
than from the exact relation $\EE_z(\xi_\varGamma)=n$; however, this
necessitates obtaining a refined asymptotic bound on the error term
$\EE_z(\xi_\varGamma)-n$. Last but not least, the main technical
novelty that has allowed us to extend and enhance the argumentation
of \cite{BZ4} to a much more general setting considered here is that
we work with \emph{cumulants} rather than moments (see
Section~\ref{sec2.2}), which proves extremely efficient throughout.

\subsubsection*{Layout.}
The rest of the paper is organized as follows. In
Section~\ref{sec2}, we define the families of measures $\QQ_z$ and
$\PP_n$. In Section~\ref{sec3}, suitable values of the parameter
$z=(z_1,z_2)$ are chosen (Theorem~\ref{th:delta12}), which implies
convergence of ``expected'' polygonal lines to the limit curve
$\gamma^*$ (Theorems \ref{th:3.2} and~\ref{th:8.1.1a}). Refined
first-order moment asymptotics are obtained in Section~\ref{sec3.3}
(Theorem~\ref{th:4.1}), while higher-order moment sums are analyzed
in Section~\ref{sec4}. Most of Section~\ref{sec5} is devoted to the
proof of the local limit theorem (Theorem~\ref{th:LCLT}). Finally,
the limit shape results, with respect to both $\QQ_z$ and $\PP_n$,
are proved in Section~\ref{sec5.4} (Theorems \ref{th:LSQ}
and~\ref{th:LSP}).

\subsubsection*{Some general notation.}
We denote $\ZZ_+:=\{k\in\ZZ\colon k\ge0\}$,
\,$\ZZ_+^2:=\ZZ_+\mynn\times\ZZ_+$\myp, and similarly
$\RR_+:=\{x\in\RR\colon x\ge0\}$,
\,$\RR_+^2:=\RR_+\mynn\times\RR_+$\myp. The notation $\#(\cdot)$
stands for the number of elements in a set. The symbol $\lfloor
x\rfloor:=\max\myn\{k\in\ZZ\colon k\le x\}$ denotes the (floor)
integer part of $x\in\RR$\myp. The real part of a complex number
$s=\sigma + \rmi t\in\CC$ is denoted $\Re\myp(s)=\sigma$\strut{}.
For a (row-)vector $x=(x_1,x_2)\in \RR^2$, its Euclidean norm is
defined as $|x|:=\sqrt{x_1^2+x_2^2}$\mypp, and $\langle
x,y\rangle:=x\mypp y^{\myn\topp\!}=x_1y_1+x_2\myp y_2$ is the
corresponding inner product of vectors $x,y\in \RR^2$, where
$y^{\myn\topp\!}=\genfrac{(}{)}{0pt}{}{\,y_1\,}{\,y_2\,}$ is the
transpose of $y=(y_1,y_2)$. More generally,
$A^{\myn\topp\!}=(a_{ji})$ is the transpose of matrix $A=(a_{ij})$.
The matrix norm induced by the vector norm $|\,{\cdot}\,|$ is
defined by $\|A\|:=\sup_{|x|=1}\mynn|x A|$. For
$x=(x_1,x_2)\in\ZZ_+^2$ and $z=(z_1,z_2)\in \RR^2_+$ with
$z_1,z_2>0$, we use the multi-index notation
$z^{x}:=z_1^{x_1}z_2^{x_2}$. The gamma function is denoted
$\Gamma(s)= \int_0^\infty u^{s-1}\,\rme^{-u}\,\dif{u}$, and
$\zeta(s)=\sum_{k=1}^\infty k^{-s}$ is the Riemann zeta function.

Throughout the paper, the notation $n\to\infty$ (with
$n=(n_1,n_2)\in\ZZ^2_+$) is understood as $n_1,n_2\to\infty$ in such
a way that the ratio $n_2/n_1$ stays bounded, that is, $c_*\mynn\le
n_2/n_1\le c^*$ with some constants $0<c_*\mynn\le c^*\mynn<\infty$.
The asymptotic relation $x_n\asymp y_n$ between real-valued
sequences $\{x_n\}$ and $\{y_n\}$ ($n\in\ZZ^2_+$\myp) signifies that
$0<\liminf_{n\to\infty} x_n/y_n\le\limsup_{n\to\infty}
x_n/y_n<\infty$, whereas $x_n\sim y_n$ is a standard shorthand for
$\lim_{n\to\infty}x_n/y_n=1$. Thus, the limit $n\to\infty$ defined
above can itself be characterized via the asymptotic condition
$n_1\asymp n_2$; in particular, this implies that $n_1\asymp |n|$,
$n_2\asymp |n|$, where $|n|=\sqrt{n_1^2+n_2^2}\to\infty$.

\section{Probability measures on spaces of convex polygonal lines}
\label{sec2}

\subsection{Global measure\/ \myp$\QQ_{z}$ and conditional
measure\/ $\PP_n$}\label{sec2.1}

\subsubsection{Encoding of polygonal lines.}\label{sec2.1.1}
Let $\calX\subset\ZZ^2_+$ be the subset of integer vectors with
co-prime coordinates,
\begin{equation}\label{eq:X}
\calX:=\{x=(x_1,x_2)\in\ZZ^2_+\colon\gcd\mynn(x_1,x_2)=1\},
\end{equation}
where ``$\gcd$'' stands for \emph{greatest common divisor}. Note
that the set $\ZZ^2_+$ can be viewed as an integer cone (i.e., with
nonnegative integer multipliers) generated by $\calX$ as a base;
more precisely, $\ZZ^2_+$ is a disjoint union of the multiples of
$\calX$,
\begin{equation}\label{eq:cone}
\ZZ^2_+=\bigsqcup_{k=0}^\infty k\calX.
\end{equation}
That is, for each nonzero $y\in\ZZ^2_+$ there are unique $x\in\calX$
and $k\in\NN$ such that $y=k x$.

Let $\Phi:= (\ZZ_+)^{\calX}$ be the space of functions on $\calX$
with nonnegative integer values, and consider the subspace of
functions with \textit{finite support}\/,
$\Phi_0:=\{\nu\in\Phi\myp\colon\mypp\#(\supp\myp\nu)<\infty\}$,
where $\supp\myp\nu:=\{x\in \calX\colon\mypp\nu(x)>0\}$. It is easy
to see that the space $\Phi_0$ is in one-to-one correspondence with
the space $\CP=\bigcup_{n\in\ZZ_+^2}\myn\CP_n$ of all (finite)
convex lattice polygonal lines. Indeed, given a configuration
$\nu=\{\nu(x)\}\in\Phi_0$, each $x\in \calX$ specifies the
\emph{direction} of a potential edge, only utilized if
$x\in\supp\myp\nu$, in which case the value $\nu(x)=k>0$ specifies
the \emph{scaling factor}, altogether yielding a vector edge $kx$;
finally, assembling (a finite set of) all such edges into a
polygonal line is uniquely determined by fixation of the starting
point at the origin and the convexity property. Conversely, via the
same interpretation of vector edges it is evident, in view of the
decomposition \eqref{eq:cone}, that any finite polygonal line
$\varGamma\in\CP$ determines uniquely a finitely supported
configuration $\nu\in\Phi_0$. Let us point out that the case
$\nu(x)\equiv0$ corresponds to the ``trivial'' polygonal line
$\varGamma_\emptyset$ with no edges (and with coinciding endpoints).

Under the association $\CP\ni\varGamma\leftrightarrow \nu\in\Phi_0$
described above, the vector
\begin{equation}\label{eq:xi}
\xi\equiv\xi_\varGamma:=\sum_{x\in\calX} x\myp\nu(x)
\end{equation}
has the meaning of the \emph{right endpoint} of the corresponding
polygonal line $\varGamma$. In particular, the space $\CP_n$
($n\in\ZZ_+^2$) is identified as
$\CP_n=\{\varGamma\in\CP\colon\myp\xi_\varGamma=n\}$.

\subsubsection{Family\/ of multiplicative\/ measures\/ $\QQ_z$\myp.}\label{sec2.1.2} Let
$b_0,b_1,b_2,\dots$ be a sequence of nonnegative numbers such that
$b_0>0$ (without loss of generality, we put $b_0=1$) and \emph{not
all\/ $b_k$ vanish for $k\ge1$}, and assume that the corresponding
(ordinary) generating function
\begin{equation}\label{eq:beta}
\beta(u):=1+\sum_{k=1}^\infty b_k u^k,\qquad u\in\CC,
\end{equation}
is finite for $|u|<1$ (i.e., the radius of convergence of the power
series \eqref{eq:beta} is not smaller than~$1$). Let us now define a
family of probability measures $\QQ_{z}$ on the space
$\Phi=\ZZ_+^{\calX}$, indexed by the parameter $z=(z_1,z_2)\in
(0,1)\times(0,1)$, as the distribution of a random field
$\nu=\{\nu(x)\}_{x\in \calX}$ with mutually independent values and
marginal distributions
\begin{equation}\label{Q}
\QQ_{z}\{\nu(x)=k\}=\frac{b_k z^{k x}}{\beta(z^x)}\myp,\qquad
k=0,1,2,\dots\ \quad (x\in\calX).
\end{equation}

\begin{lemma}\label{pr:F0} For each $z\in (0,1)^2$, the
condition
\begin{equation}\label{N}
\tilde\beta(z):=\prod_{x\in \calX}\beta(z^{x})<\infty
\end{equation}
is necessary and sufficient in order that $\QQ_{z}(\Phi_0)=1$.
Furthermore, if\/ $\beta(u)$ is finite for all $|u|<1$ then the
condition \eqref{N} is satisfied.
\end{lemma}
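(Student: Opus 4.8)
The plan is to compute the $\QQ_z$-probability of the event $\{\nu\in\varPhi_0\}$ directly. Since under $\QQ_z$ the coordinates $\{\nu(x)\}_{x\in\calX}$ are mutually independent, the event that $\nu$ has finite support is, up to a null set, the event that only finitely many of the $\nu(x)$ are nonzero. By the Borel--Cantelli lemma (second, independent version), this has probability $1$ if and only if $\sum_{x\in\calX}\QQ_z\{\nu(x)\ge1\}<\infty$, and probability $0$ otherwise. So the first task is to relate the convergence of $\sum_{x\in\calX}\QQ_z\{\nu(x)\ge1\}$ to condition \eqref{N}. From \eqref{Q} we have
\[
\QQ_z\{\nu(x)\ge1\}=1-\QQ_z\{\nu(x)=0\}=1-\frac{1}{\beta(z^x)}=\frac{\beta(z^x)-1}{\beta(z^x)}\myp.
\]
Since $z\in(0,1)^2$, for all but finitely many $x\in\calX$ the quantity $z^x$ is small, hence $\beta(z^x)\to1$; thus $\QQ_z\{\nu(x)\ge1\}$ is, for large $x$, comparable to $\beta(z^x)-1=\sum_{\ell\ge1}b_\ell z^{\ell x}$, and also comparable to $\ln\beta(z^x)$ (because $\beta(z^x)-1\sim\ln\beta(z^x)$ as $\beta(z^x)\to1$). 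Consequently $\sum_x\QQ_z\{\nu(x)\ge1\}<\infty$ if and only if $\sum_{x\in\calX}\ln\beta(z^x)<\infty$, which is exactly the statement that the product $\tilde\beta(z)=\prod_{x\in\calX}\beta(z^x)$ converges (each factor being $\ge1$). This establishes the equivalence of \eqref{N} with $\QQ_z(\varPhi_0)=1$.

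For the comparison estimates above I would be careful about which terms are ``large'': $b_0=1$ guarantees $\beta(z^x)\ge1$, so all factors and summands are nonnegative and no cancellation occurs; and the elementary inequalities $\tfrac12\,u\le\ln(1+u)\le u$ for $0\le u\le1$, applied with $u=\beta(z^x)-1$ once this is $\le1$, give the two-sided bound linking $\sum_x\ln\beta(z^x)$ and $\sum_x(\beta(z^x)-1)$; likewise $\tfrac12\le 1/\beta(z^x)\le1$ eventually links the latter to $\sum_x\QQ_z\{\nu(x)\ge1\}$. Only finitely many $x$ fail these smallness conditions, and they contribute a finite amount regardless, so they do not affect convergence.

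It remains to show that if $\beta(s)<\infty$ for all $|s|<1$ then \eqref{N} holds automatically. Fix $z=(z_1,z_2)\in(0,1)^2$ and set $q:=\max\{z_1,z_2\}<1$. For $x=(x_1,x_2)\in\calX$ we have $z^x=z_1^{x_1}z_2^{x_2}\le q^{x_1+x_2}$. By the comparison step it suffices to bound $\sum_{x\in\calX}\bigl(\beta(z^x)-1\bigr)\le\sum_{x\in\ZZ_+^2}\bigl(\beta(q^{x_1+x_2})-1\bigr)$. Grouping by $m=x_1+x_2$ (there are $m+1$ lattice points $x\in\ZZ_+^2$ with $x_1+x_2=m$), and using monotonicity of $\beta$, this is at most $\sum_{m=0}^\infty (m+1)\bigl(\beta(q^m)-1\bigr)$. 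Now pick any $\rho\in(q,1)$, so $q<\rho<1$; since $\beta$ is finite on $|s|<1$, $\beta(\rho)=1+\sum_{\ell\ge1}b_\ell\rho^\ell<\infty$, and for $m$ large enough $q^m\le\rho$ forces
\[
\beta(q^m)-1=\sum_{\ell\ge1}b_\ell\,q^{m\ell}\le\sum_{\ell\ge1}b_\ell\,\rho^\ell\cdot (q^m/\rho)^{\ell}\le (q/\rho)^{m}\sum_{\ell\ge1}b_\ell\rho^\ell=C\,(q/\rho)^{m},
\]
where $C:=\beta(\rho)-1<\infty$ and $q/\rho<1$; here I used $q^{m\ell}=\rho^\ell\cdot(q^m/\rho)^\ell\le\rho^\ell\cdot(q^m/\rho)$ since $q^m/\rho\le1$ and $\ell\ge1$. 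Hence $\sum_m(m+1)(\beta(q^m)-1)\le\const+C\sum_m(m+1)(q/\rho)^m<\infty$, and therefore $\tilde\beta(z)<\infty$. The only mildly delicate point in the whole argument is the two-sided comparison in the first part — making sure the finitely many ``bad'' directions $x$ (where $z^x$ is not small) are harmless and that the three series $\sum_x\QQ_z\{\nu(x)\ge1\}$, $\sum_x(\beta(z^x)-1)$, $\sum_x\ln\beta(z^x)$ converge or diverge together — but this is routine given $b_0=1$ and the elementary logarithm inequalities.
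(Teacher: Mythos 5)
Your first part (Borel--Cantelli plus the identity $\QQ_z\{\nu(x)\ge1\}=1-\beta(z^x)^{-1}$) is exactly the paper's argument, just with the routine comparison of $\sum_x(1-\beta(z^x)^{-1})$, $\sum_x(\beta(z^x)-1)$ and $\sum_x\ln\beta(z^x)$ spelled out rather than asserted. The second part, however, takes a genuinely different route. The paper swaps the order of summation, writing $\sum_{x\in\calX}(\beta(z^x)-1)=\sum_{\ell\ge1}b_\ell\sum_{x\in\calX}z^{\ell x}$, and then bounds the inner lattice sum directly by geometric series, arriving at the clean closed-form estimate $\ln\tilde\beta(z)\le\beta(z_1)/(1-z_1)+\beta(z_2)/\bigl((1-z_1)(1-z_2)\bigr)$. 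You instead dominate $z^x$ by $q^{x_1+x_2}$ with $q=\max\{z_1,z_2\}$, group by $m=x_1+x_2$ (getting the factor $m+1$), and then use the finiteness of $\beta(\rho)$ for some intermediate $\rho\in(q,1)$ to extract geometric decay $\beta(q^m)-1=O((q/\rho)^m)$. Both arguments are short and elementary; the paper's yields an explicit numerical bound on $\ln\tilde\beta(z)$, whereas yours is somewhat more robust in spirit (it only needs that $\beta$ is finite at a single point $\rho>q$) at the cost of a less explicit constant. Your chain $(q^m/\rho)^\ell\le q^m/\rho\le(q/\rho)^m$ is valid, though the detour through $(q/\rho)^m$ is unnecessary: once you have $\beta(q^m)-1\le Cq^m/\rho$ you can already sum $\sum_m(m+1)q^m<\infty$. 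Overall the proposal is correct.
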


\begin{proof} According to \eqref{Q},
$\QQ_{z}\{\nu(x)>0\}=1-\beta(z^x)^{-1}$ ($x\in\calX$). Hence,
Borel--Cantelli's lemma implies that $\QQ_{z}\{\nu\in\Phi_0\}=1$ if
and only if $\sum_{x\in\calX} \bigl(1-\beta(z^x)^{-1}\bigr)<\infty$.
In turn, the latter inequality is equivalent to~\eqref{N}.

To prove the second statement, observe using \eqref{eq:beta} that
\begin{equation}\label{eq:ln-beta0}
\ln\tilde{\beta}(z)=\sum_{x\in\calX}\ln \beta(z^x) \le
\sum_{x\in\calX} \bigl(\beta(z^x)-1\bigr)=\sum_{k=1}^\infty b_k
\sum_{x\in\calX} z^{kx}.
\end{equation}
Furthermore, for any $k\ge1$
\begin{align*}
\sum_{x\in \calX}z^{kx}&\le \sum_{x_1=1}^\infty z_1^{k x_1}+
\sum_{x_1=0}^\infty z_1^{kx_1}\sum_{x_2=1}^\infty z_2^{kx_2}\\
&=\frac{z_1^k}{1-z_1^k}+ \frac{z_2^k}{(1-z_1^k)(1-z_2^k)}\le
\frac{z_1^k}{1-z_1}+ \frac{z_2^k}{(1-z_1)(1-z_2)}\myp.
\end{align*}
Substituting this into \eqref{eq:ln-beta0} and recalling
\eqref{eq:beta}, we obtain
$$
\ln\tilde{\beta}(z) \le \frac{\beta(z_1)}{1-z_1}+
\frac{\beta(z_2)}{(1-z_1)(1-z_2)}<\infty,
$$
which implies~\eqref{N}.
\end{proof}

Lemma \ref{pr:F0} ensures that a sample configuration of the random
field $\nu(\cdot)$ belongs ($\QQ_{z}$-almost surely) to the space
$\Phi_0$, and therefore determines a \emph{finite} polygonal line
$\varGamma\in \CP$. By the mutual independence of the random values
$\nu(x)$ ($x\in\calX$), the corresponding $\QQ_z$-probability is
given by
\begin{equation}\label{Q1}
\QQ_{z}(\varGamma) =\prod_{x\in \calX}\frac{b_{\nu(x)}\myp
z^{x\myp\nu(x)}}{\beta(z^x)} =\frac{b(\varGamma)\myp
z^{\myp\xi}}{\tilde \beta(z)}\myp,\qquad \varGamma\in\CP,
\end{equation}
where $\xi:=\sum_{x\in\calX} x\myp\nu(x)$ (see the
definition~\eqref{eq:xi}) and
\begin{equation}\label{eq:b}
b(\varGamma):=\prod_{x\in \calX} \myn b_{\nu(x)}<\infty,\qquad
\varGamma\in\CP.
\end{equation}

\begin{remark}
The infinite product in \eqref{eq:b} contains only finitely many
terms different from $1$ (since $b_{\nu(x)}=b_0=1$ for
$x\notin\supp\myp\nu$).
\end{remark}

In particular, for the trivial polygonal line
$\varGamma_\emptyset\leftrightarrow \nu\equiv0$ (see
Section~\ref{sec2.1.1}) the formula \eqref{Q1} yields
\begin{equation*}
\QQ_z(\varGamma_\emptyset)=\tilde \beta(z)^{-1}>0.
\end{equation*}
On the other hand, we have $\QQ_z(\varGamma_\emptyset)<1$, since
$\beta(u)>\beta(0)=1$ for all $u>0$ and hence, according to the
definition~\eqref{N}, $\tilde\beta(z)>1$ for any $z\in(0,1)^2$.

\subsubsection{Conditional measure $\PP_n$\myp.}
On the subspace $\CP_{n}\subset\CP$ of polygonal lines with the
right endpoint fixed at $n\in\ZZ_+^2$, the measure $\QQ_{z}$
($z\in(0,1)^2$) induces the conditional distribution
\begin{equation}\label{Pn}
\PP_n(\varGamma):=\QQ_{z}(\varGamma\myp|\myp\CP_n)
=\frac{\QQ_{z}(\varGamma)}{\QQ_{z}(\CP_n)}\myp,\qquad \varGamma\in
\CP_n.
\end{equation}
The formula \eqref{Pn} is well defined as long as
$\QQ_{z}(\CP_n)>0$, that is, there is at least one polygonal line
$\varGamma\in\CP_n$ with $b(\varGamma)>0$ (see
\eqref{Q1},~\eqref{eq:b}). A simple sufficient condition is as
follows.
\begin{lemma}
Suppose that $b_1>0$. Then $\QQ_z(\CP_n)>0$ for all $n\in\ZZ_+^2$
such that $n_1,n_2>0$.
\end{lemma}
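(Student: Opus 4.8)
The plan is to exhibit an explicit polygonal line $\varGamma\in\CP_n$ with $b(\varGamma)>0$, which by the remarks following \eqref{Pn} suffices to conclude $\QQ_z(\CP_n)>0$. Since $b_1>0$ and $b_0=1>0$, it is enough to produce a $\varGamma\in\CP_n$ all of whose edges $e_i$ have $\ell_i\in\{0,1\}$; more simply, it suffices that every edge be a \emph{primitive} lattice vector (i.e. $\ell_i=1$), so that $b(\varGamma)=\prod_i b_1=b_1^{k}>0$, where $k$ is the number of edges. In the functional language of Section~\ref{sec2.1}, this means finding $\nu\in\varPhi_0$ with $\nu(x)\in\{0,1\}$ for all $x\in\calX$ and $\sum_{x\in\calX}x\,\nu(x)=n$; equivalently, writing $n=(n_1,n_2)$ with $n_1,n_2>0$ as a sum of \emph{distinct} primitive vectors from $\calX$.

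First I would handle the case $\gcd(n_1,n_2)=1$ trivially: then $n\in\calX$ itself, and $\varGamma$ consisting of the single edge $n$ already has $\ell=1$ (no interior lattice points by definition of $\calX$), so $b(\varGamma)=b_1>0$. For the general case I would split $n$ into two primitive pieces that are not proportional: since $n_1\ge1$ and $n_2\ge1$, consider $x^{(1)}=(1,0)$ and $x^{(2)}=(n_1-1,n_2)$. Here $x^{(1)}\in\calX$ as $\gcd(1,0)=1$. If $\gcd(n_1-1,n_2)=1$ then $x^{(2)}\in\calX$ as well, and since these two vectors are clearly non-proportional and distinct, the polygonal line with edges $x^{(1)},x^{(2)}$ (ordered by slope) lies in $\CP_n$ with $b(\varGamma)=b_1^2>0$. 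If instead $\gcd(n_1-1,n_2)=d>1$, I would adjust: use the decomposition $n=(1,0)+(1,1)+(n_1-2,n_2-1)$ valid when $n_1\ge2$, or $n=(0,1)+(1,1)+(n_1-1,n_2-2)$ when $n_2\ge2$, and iterate; a clean way is the following general claim.

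The key lemma I would isolate is: \emph{every $n=(n_1,n_2)$ with $n_1,n_2>0$ can be written as a sum of distinct primitive vectors in $\calX$.} To prove it, induct on $n_1+n_2$. If $\gcd(n_1,n_2)=1$ we are done with one part. Otherwise peel off the vector $(1,1)\in\calX$ (legitimate since $n_1,n_2\ge1$): the remainder $n':=(n_1-1,n_2-1)$ has nonnegative coordinates. If both coordinates of $n'$ are positive, apply the inductive hypothesis to $n'$ to write it as a sum of distinct primitive vectors; none of these can equal $(1,1)$ because $(1,1)$ contributes $2$ to the coordinate sum whereas—more carefully—one simply notes that if $(1,1)$ appeared in the decomposition of $n'$, we could instead absorb it to write $n$ using $2\cdot(1,1)$, but that violates distinctness, so one argues a bit differently: choose the decomposition of $n'$ and observe that if it uses $(1,1)$ then $n'-(1,1)=(n_1-2,n_2-2)$ still has a valid decomposition by induction, giving $n=(1,1)+(1,1)+\dots$—still bad. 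The honest fix is to peel a different vector: whenever $\gcd(n_1,n_2)=d>1$, pick $x=(1,0)$ if $n_1\ge 1$ and $n_1-1$ is \emph{coprime} to $n_2$ after removing it is not guaranteed, so instead use the two-part split $n=(a,b)+(n_1-a,n_2-b)$ and invoke the fact that one can always choose $a\in\{1,\dots,n_1\}$ with $\gcd(a,n_2)=1$ and then $\gcd(n_1-a,?)$—the cleanest route is: since there are infinitely many primes, choose a prime $p>\max(n_1,n_2)$; then $(1,0),(2,0),\dots$ won't help, but one can use that for $n_2\ge1$ there exists $a$ with $1\le a\le n_1$ and both $\gcd(a,n_2)=1$; setting the first edge $(a, n_2)$ leaves $(n_1-a,0)$, which if positive is $(n_1-a,0)=(n_1-a)\cdot(1,0)$ with $\ell=n_1-a$, not primitive.

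Given the subtlety above, the route I would actually commit to in the write-up is the minimal one: produce \emph{one} concrete $\varGamma\in\CP_n$ with $b(\varGamma)>0$, not necessarily with all $\ell_i=1$. Take the ``staircase'' line with edges consisting of $n_1$ copies of $(1,0)$ followed by $n_2$ copies of $(0,1)$—no wait, convexity forbids repeating a direction. Instead: take the \emph{two-edge} line $\varGamma$ with first edge the primitive vector $(1,0)$ and second edge $(n_1-1,n_2)$ when $n_1\ge 1$; its second edge has $\ell=\gcd(n_1-1,n_2)\ge 1$ when $n_1\ge2$ or $=\gcd(0,n_2)=n_2\ge1$ when $n_1=1$. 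In all these cases $\ell\ge1$, so $b_\ell=\binom{r+\ell-1}{\ell}$—no, here $b_\ell$ is a \emph{general} nonnegative sequence, and we are only told $b_1>0$. So the two-edge line works only if its second edge is primitive. Therefore the final plan is: if $\gcd(n_1,n_2)=1$, use the single primitive edge $n$; otherwise, since $n_1,n_2\ge 1$, the vectors $(1,0)$ and $(n_1-1,n_2)$ are distinct and non-proportional, and we reduce to making $(n_1-1,n_2)$ a sum of distinct primitive vectors—inductively, since $n_1-1<n_1$ and $n_2\ge1$, with the base case $\gcd=1$ handled and the coordinate sum strictly decreasing. The one remaining gap, distinctness of $(1,0)$ from the primitive pieces of $(n_1-1,n_2)$, is resolved because $(1,0)$ has second coordinate $0$ while, in the reduced problem, we may always arrange the pieces to avoid $(1,0)$ (e.g. peel $(1,1)$ or $(0,1)$ first as appropriate). \textbf{The main obstacle} is precisely this bookkeeping of distinctness in the inductive decomposition into primitive vectors; everything else (positivity of the product $\prod b_{\nu(x)}$ given $b_0,b_1>0$, the equivalence with $\QQ_z(\CP_n)>0$ via \eqref{Q1}) is immediate.
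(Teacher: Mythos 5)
You have the right overall strategy (exhibit one $\varGamma\in\CP_n$ with $b(\varGamma)>0$, so $\QQ_z(\CP_n)\ge\QQ_z(\varGamma)>0$), but your proposal does not close. You candidly acknowledge this at the end: the inductive decomposition of $n$ into \emph{distinct} primitive vectors from $\calX$ has an unresolved bookkeeping issue, because the vector you peel off (e.g.\ $(1,0)$ or $(1,1)$) may recur among the pieces of the remainder, and the suggestion to ``arrange the pieces to avoid it'' is not actually justified. As written, the key claim (every $n$ with $n_1,n_2>0$ is a sum of distinct elements of $\calX$) is not proved, so the argument is incomplete.

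The gap has a much simpler resolution, which is the one the paper uses and which your write-up circles around but never lands on: you do not need an induction or an arbitrary number of parts. Take the two-term split
\[
n=(n_1-1,\mypp 1)+(1,\mypp n_2-1).
\]
Both summands have a coordinate equal to $1$, so each has $\gcd=1$ automatically and lies in $\calX$ --- this is the trick your attempts $(1,0)+(n_1-1,n_2)$ and $(a,n_2)+(n_1-a,0)$ were missing, since in those splits the second piece need not be primitive. The two vectors $(n_1-1,1)$ and $(1,n_2-1)$ are distinct (hence non-proportional, being primitive) except when $n_1=n_2=2$, in which case one substitutes $(2,2)=(1,0)+(1,2)$. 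Setting $\nu=1$ on these two directions and $\nu=0$ elsewhere defines $\varGamma^*\in\CP_n$ with $b(\varGamma^*)=b_1^2>0$, and then $\QQ_z(\CP_n)\ge\QQ_z(\varGamma^*)=b_1^2\mypp z^n\mypp\tilde\beta(z)^{-1}>0$ by \eqref{Q1}. So the fix is not ``more careful induction'' but rather a different explicit two-part decomposition exploiting that any lattice vector with a coordinate equal to $1$ is primitive.
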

\proof Observe that $n=(n_1,n_2)\in\ZZ_+^2$ (with $n_1,n_2\ge1$) can
be represented as
\begin{equation}\label{eq:n=n1+n2}
(n_1,n_2)=(n_1-1,1)+(1,n_2-1),
\end{equation}
where both points $x^{(1)}=(n_1-1,1)$ and $x^{(2)}=(1,n_2-1)$ belong
to the set $\calX$. Moreover, $x^{(1)}\ne x^{(2)}$ unless
$n_1=n_2=2$, in which case instead of \eqref{eq:n=n1+n2} we can
write $(2,2)=(1,0)+(1,2)$, where again $x^{(1)}=(1,0)\in\calX$,
$x^{(2)}=(1,2)\in\calX$. If $\varGamma^*\in\CP_n$ is a polygonal
line with two edges determined by the values $\nu(x^{(1)})=1$,
$\nu(x^{(2)})=1$ (and $\nu(x)=0$ otherwise), then, according to the
definition~\eqref{Q1}, $\QQ_z(\CP_n)\ge \QQ_z(\varGamma^*)=b_1^2
z^n\tilde{\beta}(z)^{-1}>0$.
\endproof

The parameter $z$ may be dropped in the notation \eqref{Pn} due to
the following key fact.

\begin{lemma}
The measure\/ $\PP_n$ in\/ \eqref{Pn} does not depend on~$z$.
\end{lemma}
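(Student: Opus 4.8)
The plan is to compute the conditional probability $\QQ_z(\varGamma\,|\,\CP_n)$ explicitly and observe that the $z$-dependent factors cancel. Starting from formula \eqref{Q1}, for any $\varGamma\in\CP_n$ we have $\QQ_z(\varGamma)=b(\varGamma)\mypp z^{\xi_\varGamma}/\tilde\beta(z)=b(\varGamma)\mypp z^{n}/\tilde\beta(z)$, since by definition every polygonal line in $\CP_n$ has right endpoint $\xi_\varGamma=n$. Summing over $\varGamma\in\CP_n$ gives $\QQ_z(\CP_n)=z^{n}\tilde\beta(z)^{-1}\sum_{\varGamma\in\CP_n}b(\varGamma)=z^{n}\tilde\beta(z)^{-1}B_n$, with $B_n$ as in \eqref{eq:b-Gamma}.

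Dividing, the common factor $z^{n}\tilde\beta(z)^{-1}$ cancels, leaving
\begin{equation*}
\PP_n(\varGamma)=\frac{\QQ_z(\varGamma)}{\QQ_z(\CP_n)}=\frac{b(\varGamma)}{B_n}\myp,\qquad\varGamma\in\CP_n,
\end{equation*}
which is exactly \eqref{eq:P-r}--\eqref{eq:b-Gamma} and manifestly independent of $z$. One should remark that this computation is valid for every $z\in(0,1)^2$ for which $\QQ_z(\CP_n)>0$; by the preceding lemma (assuming $b_1>0$, or more generally whenever $B_n>0$) this holds, so the conditional measure is well defined and the formula applies.

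There is essentially no obstacle here: the only point requiring a word of care is that the cancellation hinges on the \emph{exact} identity $\xi_\varGamma=n$ for all $\varGamma\in\CP_n$ — it is crucial that $z^{\xi_\varGamma}$ is constant on $\CP_n$, which is precisely what makes the conditioning collapse to a reweighting by $b(\varGamma)$ alone. The parameter $z$ thus plays no role in $\PP_n$ itself and is genuinely a free auxiliary device, to be calibrated later (in Section~\ref{sec3}) purely for the sake of the asymptotic analysis of $\QQ_z(\CP_n)$.
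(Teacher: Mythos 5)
Your proof is correct and follows essentially the same route as the paper's: observe that $\xi_\varGamma=n$ for all $\varGamma\in\CP_n$ so that $\QQ_z(\varGamma)=b(\varGamma)z^n/\tilde\beta(z)$, sum over $\CP_n$, and divide so that the $z$-dependent factor $z^n/\tilde\beta(z)$ cancels. The remarks on well-definedness via $\QQ_z(\CP_n)>0$ and on the key role of $\xi_\varGamma$ being constant on $\CP_n$ are accurate and consistent with the paper.
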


\begin{proof} If $\CP_n\ni\varGamma \leftrightarrow\nu_\varGamma\myn\in\Phi_0$ then
$\xi_\varGamma=n$ (see~\eqref{eq:xi}) and the formula \eqref{Q1} is
reduced to
\begin{equation*}
\QQ_{z}(\varGamma)= \frac{b(\varGamma)\myp z^n}{\tilde
\beta(z)}\myp,\qquad \varGamma\in \CP_n.
\end{equation*}
Accordingly, using \eqref{Pn} we get the expression
(cf.~\eqref{eq:P-r})
\begin{equation}\label{condP}
\PP_n(\varGamma)=\frac{b(\varGamma)}{\sum_{\varGamma'\myn\in\CP_n}\mynn
b(\varGamma')}\myp,\qquad \varGamma\in \CP_n,
\end{equation}
which is $z$-free.
\end{proof}

\subsection{Generating functions and cumulants}\label{sec2.2}

\subsubsection{Cumulant expansions.}
Recalling the expansion \eqref{eq:beta} for the generating function
$\beta(u)$ (with $\beta(0)=b_0=1$), consider the corresponding power
series expansion of its logarithm,
\begin{equation}\label{eq:ln-beta}
\ln\beta(u)=\sum_{k=1}^\infty a_k u^k,\qquad u\in\CC,
\end{equation}
assuming that the series \eqref{eq:ln-beta} is (absolutely)
convergent for all $|u| < 1$. Here and below, $\ln s$ with $s\in\CC$
means the principal branch of the logarithm specified by the value
$\ln 1=0$.

\begin{remark}\label{rm:b=a}
On substituting the expansion \eqref{eq:beta} into
\eqref{eq:ln-beta}, it is evident that $a_1=b_1$; more generally, if
$j^*\mynn:=\min\{j\ge 1\colon a_j\ne0\}$ and $k^*\mynn:=\min\{k\ge
1\colon b_k>0\}$ then $j^*\mynn=k^*$ and
$a_{j^*}\mynn=b_{k^*}\mynn>0$.
\end{remark}

Under the measure $\QQ_z$ defined in \eqref{Q}, the characteristic
function $\varphi_{\nu(x)}(t):=\EE_z(\rme^{\myp\rmi t\myp\nu(x)})$
of the random variable $\nu(x)$ ($x\in\calX$) is given by
\begin{equation}\label{eq:c.f.}
\varphi_{\nu(x)}(t)=\frac{\beta(z^x\rme^{\myp\rmi
t})}{\beta(z^x)}\myp, \qquad t\in\RR.
\end{equation}
[For notational simplicity, we suppress the dependence on $z$ in the
notation, which should cause no confusion.] Hence, with the help of
\eqref{eq:ln-beta} the (principal branch of the) logarithm of
$\varphi_{\nu(x)}(t)$ is expanded as
\begin{equation}\label{eq:ln_c.f.}
\ln \varphi_{\nu(x)}(t)=\ln \beta(z^x\rme^{\myp\rmi t})-\ln
\beta(z^x) =\sum_{k=1}^\infty a_k(\rme^{\myp\rmi kt}-1)\mypp
z^{kx},\qquad t\in\RR\myp.
\end{equation}

For a generic random variable $X$, let $\varkappa_q=\varkappa_q[X]$
denote its \emph{cumulants} of order $q\in\NN$ (see
\cite[\S\myp3.12, p.~69]{KS}), defined by the formal identity in
indeterminant $t$
\begin{equation}\label{eq:varkappa}
\ln \varphi(t)=\sum_{q=1}^\infty \frac{(\rmi\myp
t)^q}{q!}\mypp\varkappa_q,
\end{equation}
where $\varphi(t)=\EE\myp(\rme^{\rmi tX})$ is the characteristic
function of $X$. By differentiating \eqref{eq:varkappa} at $t=0$, it
is easy to see (cf.\ \cite[\S\myp3.14, Eq.~(3.37), p.~71]{KS}) that
\begin{equation}\label{eq:m1}
\EE\myp(X)=\varkappa_1,\qquad \Var\myp(X)=\varkappa_2\myp.
\end{equation}
Let us also point out the standard expressions for the next few
central moments of $X$ through the cumulants (see \cite[\S\myp3.14,
Eq.~(3.38), p.~72]{KS}): if $X_0:=X-\EE\myp(X)$ then
\begin{equation}\label{eq:kappa's}
\begin{aligned}
\EE\myp(X_0^3)&=\varkappa_3,\\
\EE\myp(X_0^4)&=\varkappa_4 +
3\myp\varkappa_2^2,\\
\EE\myp(X_0^5)&=\varkappa_5
+10\myp\varkappa_3\myp\varkappa_2,\\
\EE\myp(X_0^6)&=\varkappa_6 +
15\myp\varkappa_4\myp\varkappa_2 + 10\myp\varkappa_3^2+
15\myp\varkappa_2^3.
\end{aligned}
\end{equation}

Let us now turn to the cumulants $\varkappa_q[\nu(x)]$ of the random
variables $\nu(x)$ (under the probability distribution $\QQ_z$). The
following simple lemma will be instrumental in our analysis.
\begin{lemma}\label{lm:kappa(q)}
The cumulants of\/ $\nu(x)$ \textup{(}$x\in\calX$\textup{)} are
given by
\begin{equation}\label{eq:cumulants}
\varkappa_q[\nu(x)]=\sum_{k=1}^\infty k^q a_k\myp z^{kx},\qquad
q\in\NN.
\end{equation}
\end{lemma}

\begin{proof}
Taylor expanding the exponential function in \eqref{eq:ln_c.f.}, we
get
\begin{equation}\label{eq:ln(g)}
\ln\varphi_{\nu(x)}(t)=\sum_{k=1}^\infty a_k z^{kx}
\sum_{q=1}^\infty \frac{(\rmi\myp kt)^q}{q!}
 =\sum_{q=1}^\infty
\frac{(\rmi\myp t)^q}{q!}\sum_{k=1}^\infty k^q a_k\myp z^{kx},
\end{equation}
where the interchange of the order of summation in the double series
\eqref{eq:ln(g)} is justified by its absolute convergence. Comparing
the expansion \eqref{eq:ln(g)} with the identity
\eqref{eq:varkappa}, we obtain the formulas \eqref{eq:cumulants} for
the coefficients $\varkappa_q[\nu(x)]$.
\end{proof}

Lemma \ref{lm:kappa(q)} allows us to obtain series representations
for the cumulants of the components
$\xi_j=\sum_{x\in\mathcal{X}}x_j\mypp\nu(x)$ of the random vector
$\xi=(\xi_1,\xi_2)$ (see~\eqref{eq:xi}). Namely, using the rescaling
relation $\varkappa_q[cX]=c^q\varkappa_q[X]$ (see \cite[\S\myp3.13,
p.~70]{KS}) and the additivity property of the cumulants for
independent summands (see \cite[\S\myp7.18, pp.\ 201--202]{KS}),
from \eqref{eq:cumulants} we get for $q\in\NN$
\begin{equation}\label{eq:cumulants-xi}
\varkappa_q[\myp\xi_j\myp]=\sum_{x\in\mathcal{X}}x_j^q
\myp\varkappa_q[\nu(x)]=\sum_{x\in\mathcal{X}}x_j^q
\sum_{k=1}^\infty k^q a_k\myp z^{kx}\qquad (j=1,2).
\end{equation}
In particular, the expected value and the variance of \myp$\xi_j$
are given by (see~\eqref{eq:m1})
\begin{align*}
\EE_z(\xi_j)&=\sum_{x\in\mathcal{X}}x_j \sum_{k=1}^\infty k a_k\myp
z^{kx},\\
\Var_z(\xi_j)&=\sum_{x\in\mathcal{X}}x_j^2 \sum_{k=1}^\infty k^2
a_k\myp z^{kx}.
\end{align*}

\subsubsection{Dirichlet series associated with\/ $\ln\beta(u)$.}
For $s\in\CC$ such that $\Re\myp(s)=:\sigma>0$, consider the
Dirichlet series
\begin{equation}\label{eq:A2-}
A(s):=\sum_{k=1}^\infty \frac{a_{k}}{k^s}\myp,\qquad
A^+(\sigma):=\sum_{k=1}^\infty \frac{|a_{k}|}{k^\sigma}\myp,
\end{equation}
where $\{a_k\}$ are the coefficients in the power series expansion
of $\ln\beta(u)$ (see~\eqref{eq:ln-beta}).

Although some of the coefficients $\{a_k\}$ may be negative, it
turns out that the quantity $A(2) = \sum_{k=1}^\infty a_k\myp
k^{-2}$, whenever it is finite, cannot vanish or take a negative
value.

\begin{lemma}\label{lm:A(2)>0} If $A^+(2)<\infty$ then
$0<A(2)<\infty$ and the following integral formula holds
\begin{equation}\label{eq:kappa1}
A(2)=\int_0^1\mynn u^{-1}\mynn\left(\int_0^u\mynn v^{-1}
\ln\beta(v)\,\dif{v}\right)\dif{u}.
\end{equation}
\end{lemma}
\begin{proof}
From \eqref{eq:beta} it is evident that $\ln\beta(u)>\ln 1=0$ for
all $u\in(0,1)$, and it readily follows that the double integral on
right-hand side of \eqref{eq:kappa1} is positive (and possibly
infinite). Furthermore, substituting the expansion
\eqref{eq:ln-beta} and integrating term by term (which is
permissible for power series inside the interval of convergence), we
obtain for $s\in(0,1)$
\begin{align}
\notag \int_0^s\mynn u^{-1}\mynn\left(\int_0^u\myn v^{-1}
\ln\beta(v)\,\dif{v}\right)\dif{u}&=\int_0^s\mynn
u^{-1}\sum_{k=1}^\infty  a_k\myn \left(\int_0^u\mynn
v^{k-1}\,\dif{v}\right)\dif{u}\\
\label{eq:Abel} &=\sum_{k=1}^\infty  \frac{a_k}{k}\mynn
\int_0^s\mynn u^{k-1}\,\dif{u}=\sum_{k=1}^\infty
\frac{a_k}{k^2}\mypp s^k.
\end{align}
Passing here to the limit as $s\uparrow1$ and applying to the
right-hand side of \eqref{eq:Abel} Abel's theorem on power series
(see \cite[\S1.22, pp.\ 9--10]{Titch2}), we obtain the
identity~\eqref{eq:kappa1}.
\end{proof}

\begin{remark}
The condition $A^+(2)<\infty$ and the quantity $A(2)$ will play a
major role in our argumentation; in particular, $A(2)$ is involved
in a suitable calibration of the ``free'' parameter $z=(z_1,z_2)$ in
the definition \eqref{Q} of the measure $\QQ_z$ (see Section
\ref{sec3.1} below). However, some results (such as Theorem
\ref{th:4.1}, Lemma \ref{lm:7.1} and Theorem \ref{th:LCLT}) will
require a stronger condition $A^+(1)<\infty$. Our main result on the
limit shape under the measure $\PP_n$ (see Theorem~\ref{th:LSP}) is
dependent on these statements, and therefore is stated and proved
under the latter condition.
\end{remark}

\subsection{Auxiliary estimates for power-exponential
sums}\label{sec2.3}

In what follows, we frequently encounter power-exponential sums of
the form
\begin{equation}\label{eq:q}
S_q(t):=\sum_{k=1}^\infty k^{q-1}\rme^{-tk},\qquad t>0.
\end{equation}
For the first few integer values of $q$, explicit expressions of
$S_q(t)$ are easily available,
\begin{equation}\label{eq:Sq-exact}
S_1(t)=\frac{\rme^{-t}}{1-\rme^{-t}}\myp,\qquad
S_2(t)=\frac{\rme^{-t}}{(1-\rme^{-t})^2}\myp,\qquad
S_3(t)=\frac{\rme^{-t}\mypp(1+\rme^{-t})}{(1-\rme^{-t})^3}\myp.
\end{equation}

The purpose of this subsection is to obtain estimates on $S_q(t)$
with any integer~$q$.

\begin{lemma}\label{lm:6.2}
For\/ $q\in\NN$, the function $S_q(t)$ admits the representation
\begin{equation}\label{eq:S}
S_q(t)=\sum_{j=1}^{q} c_{j,\myp q}\,\frac{\rme^{-t
j}}{(1-\rme^{-t})^j},\qquad t>0,
\end{equation}
with some constants\/ $c_{j,\myp q}>0$
\,\textup{(}$j=1,\dots,q$\textup{)}\myp\textup{;} in particular,
$c_{1,\myp q}=1$ and $c_{q,\myp q}=(q-1)!$\mypp.
\end{lemma}

\proof For $q=1$, the expression for $S_1(t)$ from
\eqref{eq:Sq-exact} is a particular case of \eqref{eq:S} with
$c_{1,1}=1$. Assume now that the expansion \eqref{eq:S} is valid for
some $q\ge1$ (including the ``boundary'' values $c_{1,\myp q}=1$,
\,$c_{q,\myp q}=(q-1)!$\myp). Then, differentiating the identities
\eqref{eq:q} and \eqref{eq:S} with respect to $t$, we obtain
\begin{align*}
S_{q+1}(t)=-\frac{\dif}{\dif t} S_q(t)&=\sum_{j=1}^{q} c_{j,\myp
q}\left(\frac{j\mypp \rme^{-t j}}{(1-\rme^{-t})^j}+
\frac{j\mypp\rme^{-t
(j+1)}}{(1-\rme^{-t})^{j+1}}\right)\\
&=\sum_{j=1}^{q+1} c_{j,\myp q+1}\frac{\rme^{-t
j}}{(1-\rme^{-t})^j}\myp,
\end{align*}
where we set
\begin{equation*}
  c_{j,\myp{}q+1}:=
  \begin{cases}
  \ c_{1,\myp{}q}\myp,&j=1,\\
  \ j\myp c_{j,\myp{}q}+(j-1)\mypp c_{j-1,\myp{}q}\myp,\ \ &2\le j\le q,\\
  \ q\myp c_{q,\myp{}q}\myp,& j=q+1.
  \end{cases}
\end{equation*}
In particular, $c_{1,\myp q+1}= c_{1,\myp{}q}=1$ and $c_{q+1,\myp
q+1}=q\myp c_{q,\myp{}q}=q\myp(q-1)!=q!$\myp. Thus, the formula
\eqref{eq:S} holds for $q+1$ and hence, by induction, for all
$q\ge1$.
\endproof

\begin{lemma}\label{lm:C_k}
\textup{(a)} \,For each $q\in\NN$, there exists an absolute
constant\/ $\bar{c}_q>0$ such that
\begin{equation}\label{eq:Cc}
S_q(t)\le \frac{\bar{c}_q\mypp\rme^{-t}}{(1-\rme^{-t})^q}\myp,\qquad
t>0.
\end{equation}

 \textup{(b)} \,Moreover,
\begin{equation}\label{eq:Sq}
S_q(t)\sim \frac{(q-1)!}{t^q}\myp,\qquad t\to 0.
\end{equation}
\end{lemma}
\proof (a) \,Observe that for $j=1,\dots,q$ and all $t>0$
\begin{equation*}
\frac{\rme^{-tj}}{(1-\rme^{-t})^j}\le\frac{\rme^{-t}}{(1-\rme^{-t})^q}\myp.
\end{equation*}
Substituting these inequalities into \eqref{eq:S} and recalling that
all $c_{j,\myp q}>0$, we obtain \eqref{eq:Cc} with
$\bar{c}_q:=\sum_{j=1}^q c_{j,\myp q}$\mypp.

\smallskip
(b) \,For each term in the expansion \eqref{eq:S} we have $\rme^{-t
j}\myp(1-\rme^{-t})^{-j}\sim t^{-j}$ as $t\to0$. Hence, the overall
asymptotic behavior of $S_q(t)$ is determined by the term with $j=q$
and the corresponding coefficient $c_{q,\myp q}=(q-1)!$ (see
Lemma~\ref{lm:6.2}), and the formula \eqref{eq:Sq} follows.
\endproof

The next general lemma can be used to obtain a simplified polynomial
estimate for the right-hand side of the bound \eqref{eq:Cc}, which
is sometimes convenient.

\begin{lemma}\label{lm:exp<C}
For any\/ $q>0$ and $\theta>0$, there is a constant\/
$C_q(\theta)>0$ such that
\begin{equation}\label{eq:exp_bound}
\frac{\rme^{-\theta t}}{(1-\rme^{-t})^q} \le C_q(\theta)\mypp
t^{-q},\qquad t>0.
\end{equation}
\end{lemma}
\begin{proof}
Set $f(t):=t^q\mypp\rme^{-\theta t}(1-\rme^{-t})^{-q}$ and note that
\begin{equation*}
\lim_{t\downarrow 0}f(t)=1,\qquad \lim_{t\to +\infty} f(t)=0.
\end{equation*}
By continuity, the function $f(t)$ is bounded on $(0,\infty)$, and
the inequality \eqref{eq:exp_bound} follows.
\end{proof}

\section{Asymptotics of the expectation}
\label{sec3}

\subsection{Calibration of the parameter $z$}\label{sec3.1}

 Our aim in this section is to adjust the parameter
$z=(z_1,z_2)$, as a suitable function of $n=(n_1,n_2)$, in such a
way that under the corresponding measure $\QQ_z$ the following
asymptotic conditions are satisfied,
\begin{equation}\label{calibr1}
\lim_{n\to\infty} n_1^{-1} \EE_{z}(\xi_1)=\lim_{n\to\infty} n_2^{-1}
\EE_{z}(\xi_2)=1,
\end{equation}
where $\xi_j=\sum_{x\in\calX}x_j\mypp\nu(x)$ (see~\eqref{eq:xi}) and
$\EE_{z}$ denotes expectation with respect to $\QQ_{z}$. Let us use
the ansatz
\begin{equation}\label{alpha}
z_j=\rme^{-\alpha_j},\qquad \alpha_j=\delta_j\mypp n_j^{-1/3}\qquad
(j=1,2),
\end{equation}
where the quantities $\delta_1,\delta_2>0$, possibly depending on
$n$, are presumed to be bounded and separated from zero (i.e.,
$\delta_1, \delta_2\asymp 1$ as $n\to\infty$). Hence, using the
formula \eqref{eq:cumulants-xi} with $q=1$, we get (in vector form)
\begin{equation}\label{E_i'}
\EE_{z}(\xi)=\sum_{k=1}^\infty k a_{k} \sum_{x\in\calX}x\mypp
\rme^{-k\langle\alpha,\myp x\rangle}.
\end{equation}

\subsubsection{Evaluating sums over $\calX$ via the M\"obius inversion formula.}
Recall that the \textit{M\"obius function}
$\mu\colon\NN\to\{-1,0,1\}$ is defined as follows (see \cite[\S16.3,
p.~304]{HW}): $\mu(1):=1$, \,$\mu(m):=(-1)^{d}$ if $m$ is a product
of $d$ different prime numbers, and $\mu(m):=0$ otherwise; in
particular, $|\mu(m)|\le 1$ for all $m\in\NN$.

To deal with sums over the set $\calX$ (see~\eqref{eq:X}), the
following lemma will be instrumental.
\begin{lemma}\label{lm:Mobius}
Let $f\colon\RR_+^2\to\RR_+$ be a function such that
$f(x)|_{x=(0,0)}=0$ and for all $h>0$
\begin{equation}\label{eq:F}
F(h):= \sum_{x\in\ZZ^2_+}f(hx)<\infty.
\end{equation}
Moreover, assume that
\begin{equation}\label{eq:sum_sum<}
\sum_{k=1}^\infty F(hk)<\infty,\qquad h>0.
\end{equation}
Then the function
\begin{equation}\label{F_sharp}
F^\sharp(h):=\sum_{x\in \calX} f(hx), \qquad h>0,
\end{equation}
satisfies the identity
\begin{equation}\label{eq:Mobius}
F^\sharp(h)=\sum_{m=1}^\infty\mu(m)F(m h),\qquad h>0.
\end{equation}
\end{lemma}
\begin{proof}
Recalling the decomposition \eqref{eq:cone} and using that $f(x)$
vanishes at the origin, observe from \eqref{eq:F} and
\eqref{F_sharp} that
\begin{equation}\label{eq:F=sum}
F(h)=\sum_{m=1}^\infty \mynn F^\sharp(m h),\qquad h>0.
\end{equation}
Then the identity \eqref{eq:Mobius} follows by the M\"obius
inversion formula (see \cite[\S16.5, Theorem~270, p.~307]{HW}),
provided that $\sum_{k,\myp{}m} F^\sharp(km h)<\infty$ ($h>0$).
Indeed, the latter condition is satisfied,
\begin{align*}
\sum_{k=1}^\infty\sum_{m=1}^\infty F^\sharp(km h)&=
\sum_{k=1}^\infty F(kh)<\infty,
\end{align*}
according to \eqref{eq:F=sum} and the
hypothesis~\eqref{eq:sum_sum<}. This completes the proof.
\end{proof}

\subsubsection{The basic parameterization.}
\begin{theorem}\label{th:delta12}
Suppose that $A^+(2)<\infty$ \textup{(}see~\eqref{eq:A2-}\textup{)},
and choose $\delta_1,\delta_2$ in\/ \eqref{alpha} as follows
\begin{equation}\label{delta12}
\delta_1=\kappa\mypp \tau^{1/3},\qquad \delta_2=\kappa\mypp
\tau^{-1/3},
\end{equation}
where
\begin{equation}\label{eq:kappa}
\tau\equiv\tau_n:=\frac{n_2}{n_1}\myp,\qquad
\kappa\myn:=\left(\frac{A(2)}{\zeta(2)}\right)^{\myn1/3}.
\end{equation}
Then the asymptotic conditions\/ \eqref{calibr1} are satisfied.
\end{theorem}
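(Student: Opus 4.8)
The plan is to compute $\EE_z(\xi_j)$ explicitly via the Möbius machinery of Lemma~\ref{lm:Mobius} and then extract the leading-order asymptotics as $n\to\infty$. Starting from \eqref{E_i'}, for each coordinate $j=1,2$ I would write $\EE_z(\xi_j)=\sum_{k\ge1}ka_k\sum_{x\in\calX}x_j\rme^{-k\langle\alpha,x\rangle}$. The inner sum over $\calX$ is of the form $F^\sharp(h)$ with the vector-valued ``increment'' encoded by $f(hx):=x_j\rme^{-\langle u,x\rangle}$ suitably interpreted; more precisely, I would apply Lemma~\ref{lm:Mobius} to the scalar function obtained after absorbing $k$ into the scaling, so that summation over $x\in\calX$ together with the $k$-sum rearranges (using $\ZZ_+^2=\bigsqcup_m m\calX$ and Möbius inversion) into a sum over all of $\ZZ_+^2$ against a Dirichlet-type coefficient. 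Concretely, the key identity is that $\sum_{k\ge1}ka_k\,g(k\alpha)$ with $g(u)=\sum_{x\in\calX}x_j\rme^{-\langle u,x\rangle}$ can be rewritten by introducing $\tilde g(u)=\sum_{x\in\ZZ_+^2}x_j\rme^{-\langle u,x\rangle}$ and noting $g=\sum_m\mu(m)\tilde g(m\,\cdot)$, which after interchanging sums produces $\sum_{d\ge1}\bigl(\sum_{k:\,d\mid k}\dots\bigr)$ collapsing against $\zeta$ and $A(\cdot)$; the bookkeeping is routine but must be done carefully.

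The next step is the asymptotic evaluation of $\tilde g(u)=\sum_{x\in\ZZ_+^2}x_j\rme^{-\langle u,x\rangle}$ as $u=(u_1,u_2)\to 0$. Since this is a product geometric-type sum, I would factor it: e.g.\ for $j=1$, $\tilde g(u)=\bigl(\sum_{x_1\ge0}x_1\rme^{-u_1x_1}\bigr)\bigl(\sum_{x_2\ge0}\rme^{-u_2x_2}\bigr)=\dfrac{\rme^{-u_1}}{(1-\rme^{-u_1})^2}\cdot\dfrac{1}{1-\rme^{-u_2}}$. As $u_1,u_2\to0$ this is $\sim u_1^{-2}u_2^{-1}$. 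Plugging $u=k\alpha=(k\delta_1 n_1^{-1/3},k\delta_2 n_2^{-1/3})$ and summing, the $k$-dependence is $k\cdot k^{-2}\cdot k^{-1}=k^{-2}$ paired with $a_k$, so the Dirichlet-coefficient sum produces exactly $A(2)$ (up to the Möbius/$\zeta$ rearrangement, which contributes the factor $\zeta(2)^{-1}$ cancelling against what the reindexing introduces — this is precisely where the combination $A(2)/\zeta(2)$ appears). Collecting powers of $n$: the $u_1^{-2}$ gives $\delta_1^{-2}n_1^{2/3}$ and $u_2^{-1}$ gives $\delta_2^{-1}n_2^{1/3}$, so the leading term is $\asymp\delta_1^{-2}\delta_2^{-1}n_1^{2/3}n_2^{1/3}$. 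I would then verify this equals $n_1(1+o(1))$ for the stated choice \eqref{delta12}–\eqref{eq:kappa}: substituting $\delta_1=\kappa(n_2/n_1)^{1/3}$, $\delta_2=\kappa(n_1/n_2)^{1/3}$ gives $\delta_1^{-2}\delta_2^{-1}=\kappa^{-3}(n_1/n_2)^{2/3}(n_2/n_1)^{1/3}=\kappa^{-3}(n_1/n_2)^{1/3}$, so the product is $\kappa^{-3}n_1^{2/3}n_2^{1/3}(n_1/n_2)^{1/3}=\kappa^{-3}n_1$, and since $\kappa^3=A(2)/\zeta(2)$ the factors combine to give $\sim n_1$. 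The computation for $\xi_2$ is symmetric.

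The main obstacle I anticipate is controlling the error terms uniformly under Assumption~\ref{as:c} (i.e.\ $n_1\asymp n_2$, so $\alpha_1\asymp\alpha_2\asymp|n|^{-1/3}\to0$). Two issues need care: first, justifying the interchange of the $k$-sum with the $x$-sum and the Möbius inversion — this is exactly what the hypothesis $A^+(2)<\infty$ together with condition \eqref{eq:sum_sum<} of Lemma~\ref{lm:Mobius} is designed to supply, since $\sum_k\sum_x|x_j|\rme^{-k\langle\alpha,x\rangle}<\infty$ follows from geometric decay and $\sum_k|a_k|k/k^3<\infty$; second, showing the subleading corrections to $\tilde g(k\alpha)$ (arising from $\rme^{-u}\approx1-u$ and $(1-\rme^{-u})^{-1}\approx u^{-1}+\tfrac12+\cdots$) and the tail contributions from large $k$ (where $k\alpha$ is no longer small) are genuinely $o(n_j)$. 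For the large-$k$ tail one uses that $\tilde g(u)$ is bounded by a constant for $|u|$ bounded away from $0$ and decays exponentially for $|u|$ large, so $\sum_{k\gg\alpha^{-1}}k|a_k|\,\tilde g(k\alpha)$ is dominated by $A^+(2)\cdot o(1)$ after rescaling; the small-$k$ corrections contribute lower powers of $n_j$ (one order of $n_j^{1/3}$ down). Assembling these estimates into a clean $\sim$ statement — rather than merely $\asymp$ — is the one place where the argument requires genuine (if standard) care, and it is where I expect the bulk of the technical work to sit.
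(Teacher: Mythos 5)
Your overall route—convert the sum over $\calX$ to a sum over $\ZZ_+^2$ via M\"obius inversion, evaluate the resulting lattice sum in closed form, extract the $\alpha_1^{-2}\alpha_2^{-1}$ leading order, and justify the limit by dominated convergence under $A^+(2)<\infty$—is exactly the paper's route (it uses the same Lemma~\ref{lm:Mobius}, the same closed form, the same uniform bound, the same DCT step). However, your ``key identity'' contains a real error. You define $g(u)=\sum_{x\in\calX}x_j\rme^{-\langle u,x\rangle}$ and $\tilde g(u)=\sum_{x\in\ZZ_+^2}x_j\rme^{-\langle u,x\rangle}$ and then assert $g=\sum_m\mu(m)\,\tilde g(m\,\cdot)$. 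That inversion would be correct if the direct relation were $\tilde g(u)=\sum_{m\ge1}g(mu)$, but it is not: in the decomposition $\ZZ_+^2\setminus\{0\}=\bigsqcup_{m\ge1}m\calX$, the coordinate factor $x_j$ picks up an extra $m$ under the substitution $x\mapsto mx$, giving
\begin{equation*}
\tilde g(u)=\sum_{m=1}^\infty m\,g(mu),\qquad\text{hence}\qquad
g(u)=\sum_{m=1}^\infty m\,\mu(m)\,\tilde g(mu).
\end{equation*}
This missing factor of $m$ is not cosmetic. With $\tilde g(km\alpha)\sim(km)^{-3}\alpha_1^{-2}\alpha_2^{-1}$, the double sum $\sum_k k\,a_k\sum_m \mu(m)\,\tilde g(km\alpha)$ from \emph{your} formula collapses to $\alpha_1^{-2}\alpha_2^{-1}\,A(2)/\zeta(3)$, not $A(2)/\zeta(2)$; only with the extra $m$ does the $m$-sum become $\sum_m\mu(m)m^{-2}=1/\zeta(2)$, which is what matches the definition of $\kappa$ in \eqref{eq:kappa}. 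You state the correct final constant $A(2)/\zeta(2)$, but the sentence ``the M\"obius/$\zeta$ rearrangement contributes the factor $\zeta(2)^{-1}$'' does not follow from the formula you wrote down—applying the same $k$-power bookkeeping you used for $A(2)$ (namely $k\cdot k^{-2}\cdot k^{-1}=k^{-2}$) to $m$ under your formula gives $\mu(m)\,m^{-3}$, i.e.\ $\zeta(3)^{-1}$.

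The paper sidesteps this pitfall by building the scaling into the function: with $f(x)=x_j\rme^{-\langle\alpha,x\rangle}$ it works with $F^\sharp(h)=\sum_{x\in\calX}f(hx)=h\,g(h\alpha)$ and $F(h)=\sum_{x\in\ZZ_+^2}f(hx)=h\,\tilde g(h\alpha)$, so that the clean relations $F(h)=\sum_m F^\sharp(hm)$ and $F^\sharp(h)=\sum_m\mu(m)F(hm)$ hold with no extra $m$, and $F(h)\sim h^{-2}\alpha_1^{-2}\alpha_2^{-1}$ directly produces $\sum_m\mu(m)m^{-2}=1/\zeta(2)$. Either fix the inversion formula (insert the factor $m$), or adopt the paper's $F^\sharp,F$ normalization. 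The remaining ingredients in your sketch (the closed form for $\tilde g$, the uniform bound needed for dominated convergence, and the final arithmetic with $\delta_1,\delta_2$) are sound and essentially match the paper's argument.
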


\begin{remark}\label{rm:tau}
According to our convention about the limit $n\to\infty$ (see the
end of the Introduction), we have $\tau\asymp 1$. Observe also that
\eqref{alpha}, \eqref{delta12} and \eqref{eq:kappa} imply the
scaling relations
\begin{equation}\label{eq:alpha2:1}
\alpha^2_1\alpha_2\mypp n_1= \alpha_1\alpha^2_2\mypp
n_2=\kappa^3,\qquad \alpha_2=\alpha_1/\tau.
\end{equation}
\end{remark}

\begin{proof}[Proof of Theorem \textup{\ref{th:delta12}}]
Let us prove \eqref{calibr1} for $\xi_1$ (the proof for $\xi_2$ is
similar). Setting
\begin{equation}\label{f+}
f(x):=x_1\myp\rme^{-\langle\alpha,\myp
x\rangle}=x_1\myp\rme^{-\alpha_1 x_1-\alpha_2 x_2},\qquad
x=(x_1,x_2)\in\RR^2_+\myp,
\end{equation}
and following the notation \eqref{F_sharp} of Lemma \ref{lm:Mobius},
a projection of the equation \eqref{E_i'} onto the first coordinate
takes the form
\begin{equation}\label{E_1F}
\EE_{z}(\xi_1) =\sum_{k=1}^\infty
ka_{k}\sum_{x\in\calX}x_1\myp\rme^{-\langle\alpha,\myp
kx\rangle}=\sum_{k=1}^\infty a_{k}\sum_{x\in\calX} f(kx)=
\sum_{k=1}^\infty a_{k} F^\sharp(k).
\end{equation}
On the other hand, substituting \eqref{f+} into \eqref{F_sharp} and
using the expression \eqref{eq:Sq-exact} for $S_q(\cdot)$ with
$q=2$, we obtain
\begin{equation}\label{F}
F(h)=h\sum_{x_1=1}^\infty
x_1\myp\rme^{-h\alpha_1x_1}\sum_{x_2=0}^\infty \rme^{-h\alpha_2x_2}
=\frac{h\mypp
\rme^{-h\alpha_1}}{(1-\rme^{-h\alpha_1})^2(1-\rme^{-h\alpha_2})}\myp.
\end{equation}
It is evident that $F(h)$ satisfies the condition
\eqref{eq:sum_sum<}, hence by Lemma \ref{lm:Mobius}
the function $F^\sharp(\cdot)$ (see \eqref{F_sharp}) can be
expressed via the formula \eqref{eq:Mobius}. Thus, substituting also
\eqref{F}, we can rewrite \eqref{E_1F} as
\begin{equation}\label{E_1}
\EE_{z}(\xi_1)= \sum_{k=1}^\infty a_{k}\sum_{m=1}^\infty \mu(m) \myp
F(km) = \sum_{k,\myp m=1}^\infty \frac{k a_{k}\myp m\mu(m)\mypp
\rme^{-km\alpha_1}}
{(1-\rme^{-km\alpha_1\myn})^2\myp(1-\rme^{-km\alpha_2})}\myp.
\end{equation}

Now, using the representation \eqref{E_1} we can obtain the
asymptotics of $\EE_z(\xi_1)$ as $n\to\infty$. Recall that
$\alpha_1=\alpha_2\myp \tau$ (see~\eqref{eq:alpha2:1}), where
$\tau\equiv \tau_n\asymp 1$ (see \eqref{eq:kappa} and
Remark~\ref{rm:tau}) and so $\tau\ge\tau_*$ for some $\tau_*>0$ and
all $n$ large enough. Applying Lemma \ref{lm:exp<C} twice (with
$q=2$, \myp$\theta=1/2$ and $q=1$, \myp$\theta=\tau_*/2$,
respectively), we obtain, uniformly in $k,m\ge1$,
\begin{align}
\notag \frac{\alpha_1^{2}\alpha_2\mypp \rme^{-km\alpha_1}}
{(1-\rme^{-km\alpha_1\myn})^2\myp(1-\rme^{-km\alpha_2})}
&=\frac{\alpha_1^{2}\mypp \rme^{-km\alpha_1/2}}
{(1-\rme^{-km\alpha_1\myn})^2} \cdot\frac{\alpha_2\mypp
\rme^{-km\alpha_2\myp\tau/2}} {1-\rme^{-km\alpha_2}}\\
\label{eq:2,1} &\le \frac{C_2(1/2)}{(k\myp
m)^2}\cdot\frac{C_1(\tau_*/2)}{k\myp
m}=\frac{O(1)}{k^3m^3}\myp,\qquad n\to\infty.
\end{align}
Thus, remembering that $|\mu(m)|\le1$, the general summand in the
double sum \eqref{E_1}, multiplied by $\alpha_1^2\alpha_2$, is
bounded by $O(1)\myp |a_k|\mypp k^{-2}m^{-2}$, which is a term of a
convergent series due to the assumption $A^+(2)<\infty$. Hence, by
Lebesgue's dominated convergence theorem we get
\begin{align}
\notag \lim_{n\to\infty} \alpha_1^2\alpha_2\myp \EE_{z}(\xi_1)&=
\sum_{k,\myp m=1}^\infty  k a_{k}\myp m\mu(m)
\lim_{n\to\infty}\frac{\alpha_1^{2}\alpha_2\mypp
\rme^{-km\alpha_1}} {(1-\rme^{-km\alpha_1\myn})^2\myp(1-\rme^{-km\alpha_2})}\\
\label{zeta32*} &=\sum_{k=1}^\infty
\frac{a_{k}}{k^2}\sum_{m=1}^\infty
\frac{\mu(m)}{m^2}=\frac{A(2)}{\zeta(2)}\equiv\kappa^3,
\end{align}
according to the notation \eqref{eq:kappa}. Note that the identity
\begin{equation}\label{eq:zeta^{-1}}
\sum_{m=1}^\infty \frac{\mu(m)}{m^s}=\frac{1}{\zeta(s)}\myp,
\end{equation}
used in \eqref{zeta32*} for $s=2$, readily follows by the M\"obius
inversion formula \eqref{eq:Mobius} with $F^\sharp(h)=h^{-s}$ and
$F(h)=\sum_{m=1}^\infty (hm)^{-s} \allowbreak =h^{-s}\myp\zeta(s)$
\,(cf.\ \cite[\S17.5, Theorem~287, p.~326]{HW}).

To complete the proof, it remains to notice that the limit
\eqref{zeta32*} is equivalent to the first of the asymptotic
conditions \eqref{calibr1} due to the scaling relation
$\alpha_1^2\alpha_2=n_1^{-1}\myn\kappa^3$ (see~\eqref{eq:alpha2:1}).
\end{proof}

\begin{assumption}\label{as:z}
Throughout the rest of the paper, we assume that $A^+(2)<\infty$ and
the parameters $z_1,z_2$ are chosen according to the formulas
\eqref{alpha}, \eqref{delta12}, \eqref{eq:kappa}. In particular, the
measure $\QQ_z$ becomes dependent on $n=(n_1,n_2)$, as well as the
corresponding expected values.
\end{assumption}

\subsection{The ``expected'' limit shape}\label{sec3.2}
Given $n=(n_1,n_2)\in\ZZ^2_+$ \myp($n_1,\myp n_2>0$) and the ratio
$\tau=n_2/n_1$ (see~\eqref{eq:kappa}), for a polygonal line
$\varGamma\in\CP$ and $t\in[0,\infty]$ let us denote by
$\varGamma(t)\equiv \varGamma(t;\tau)$ \emph{the piece of
$\varGamma$ where the slope does not exceed $t\myp\tau$}. In case
all edges of $\varGamma$ have the slope bigger than $t\myp\tau$, we
set $\varGamma(t):=\varGamma_\emptyset$ (the trivial polygonal line,
see Section~\ref{sec2.1.1}).

\begin{remark}
The definition of $\varGamma(t)$ implies that under the scaling
$\mathfrak{s}_n$ (see \eqref{eq:scaling}) the scaled piece
$\tilde{\varGamma}_n(t):=\mathfrak{s}_n(\varGamma(t))$ has the slope
not bigger than~$t$.
\end{remark}

Consider the corresponding subset of $\calX$ (see \eqref{eq:X}),
\begin{equation}\label{eq:X_n}
\calX(t)\equiv \calX(t;\tau):=\{x=(x_1,x_2)\in
\calX\colon\,x_2/x_1\le t\myp\tau\}, \qquad t\in[0,\infty].
\end{equation}
According to the association
$\CP\ni\varGamma\leftrightarrow\nu\in\Phi_0$ described in
Section~\ref{sec2.1.1}, for each $t\in[0,\infty]$ the piece
$\varGamma(t)$ of $\varGamma$ is determined by a truncated
configuration $\{\nu(x),\,x\in\calX(t)\}$, hence its right endpoint
$\xi(t)=(\xi_1(t),\xi_2(t))$ is given by
\begin{equation}\label{xi(t)}
\xi(t)=\sum_{x\in \calX(t)}\myn x\myp \nu(x), \qquad t\in[0,\infty].
\end{equation}
In particular, $\calX(\infty)=\calX$, \,$\xi(\infty)=\xi$
(see~\eqref{eq:xi}). Similarly to \eqref{E_i'}, we have
\begin{equation}\label{eq:E(t)}
\EE_{z}[\myp\xi(t)]= \sum_{k=1}^{\infty} k a_{k}\sum_{x\in
\calX(t)}x\mypp \rme^{-k\langle\alpha,\myp x\rangle}, \qquad
t\in[0,\infty].
\end{equation}

Recall that the vector-function $g^*\myn(t)=(g_1^*(t),g_2^*(t))$ is
defined in~\eqref{eq:g*}.
\begin{theorem}\label{th:3.2}
Under Assumption \textup{\ref{as:z}}, for each\/ $t\in[0,\infty]$
\begin{equation}\label{sh}
\lim_{n\to\infty} n_j^{-1}\myn\EE_{z}[\myp\xi_j(t)]=g^*_j(t)\qquad
(j=1,2).
\end{equation}
\end{theorem}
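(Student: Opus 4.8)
The plan is to repeat the argument of Theorem \ref{th:delta12}, but now with the summation restricted to the truncated set $\calX(t)$ defined in \eqref{eq:X_n}. Fixing $j=1$ (the case $j=2$ being symmetric), I would start from \eqref{eq:E(t)} and apply Lemma \ref{lm:Mobius}, but with the test function $f$ from \eqref{f+} replaced by its restriction to the cone $\{x_2/x_1\le t\,n_2/n_1\}$, i.e.\ $f_t(x):=x_1\,\rme^{-\langle\alpha,x\rangle}\,\bfI\{x_2\le t\,(n_2/n_1)\,x_1\}$. Since $0\le f_t\le f$ pointwise and $f$ already satisfies the summability hypothesis \eqref{eq:sum_sum<} (as checked in the proof of Theorem \ref{th:delta12} via the explicit formula \eqref{F}), the hypotheses of Lemma \ref{lm:Mobius} hold for $f_t$ as well. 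Hence $\EE_z[\xi_1(t)]=\sum_{k\ge1}a_k F_t^\sharp(k)=\sum_{k\ge1}k\,a_k\sum_{m\ge1}m\,\mu(m)\,F_t^{(0)}(km)$ for a suitable ``full-lattice'' function $F_t^{(0)}(h)=\sum_{x\in\ZZ_+^2}f_t(hx)$, which I would compute (or at least estimate) explicitly as a double geometric-type sum over $x_1\ge1$ and $0\le x_2\le t\,(n_2/n_1)\,x_1$.

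The key computation is the $n\to\infty$ asymptotics of $n_1^{-1}F_t^{(0)}(km)$. Writing $\rho:=t\,n_2/n_1$, one has
\begin{equation*}
F_t^{(0)}(h)=h\sum_{x_1=1}^\infty x_1\,\rme^{-h\alpha_1 x_1}\sum_{x_2=0}^{\lfloor \rho x_1\rfloor}\rme^{-h\alpha_2 x_2},
\end{equation*}
and I would argue that, after the scaling $\alpha_1,\alpha_2\to0$ with the relations \eqref{eq:alpha2:1}, the inner sum over $x_1$ becomes a Riemann sum converging to the integral $\int_0^\infty u\,\rme^{-u}\,(1-\rme^{-(\alpha_2/\alpha_1)\rho u})\,du$ (up to the correct powers of $\alpha_1$), using $\alpha_2 n_2=\alpha_1 n_1$ so that $(\alpha_2/\alpha_1)\rho = t\,(\alpha_2 n_2)/(\alpha_1 n_1)\cdot(n_1/n_2)\cdot(n_2/n_1)$ collapses to a clean constant independent of $n$. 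The dominated-convergence step is justified exactly as in Theorem \ref{th:delta12}: the summand is bounded uniformly in $k,m$ by $O(|a_k|k^{-2}m^{-2})$ via the global bound \eqref{eq:exp_bound}, and $A^+(2)<\infty$ plus $\sum_m |\mu(m)|m^{-2}<\infty$ make the dominating series summable. The $\sum_m \mu(m)m^{-2}=1/\zeta(2)$ and $\sum_k a_k k^{-2}=A(2)$ factors then pull out just as in \eqref{zeta32}, and the remaining integral, together with the constant $\kappa$ from \eqref{eq:kappa}, must be shown to equal $g_1^*(t)$ from \eqref{eq:g*}; this is an elementary integral evaluation plus bookkeeping of the $\kappa^3$ normalization, with the case $t=\infty$ recovering Theorem \ref{th:delta12} and the boundary value $g_1^*(0)=0$ being immediate since $\calX(0)$ is (essentially) empty.

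The main obstacle is the Riemann-sum / truncation estimate: unlike in Theorem \ref{th:delta12}, the inner sum over $x_1$ no longer factors into closed form because the range of $x_2$ depends on $x_1$ through the floor $\lfloor \rho x_1\rfloor$. I would handle this by sandwiching the geometric sum $\sum_{x_2=0}^{\lfloor \rho x_1\rfloor}\rme^{-h\alpha_2 x_2}$ between $(1-\rme^{-h\alpha_2(\rho x_1+1)})/(1-\rme^{-h\alpha_2})$ and $(1-\rme^{-h\alpha_2 \rho x_1})/(1-\rme^{-h\alpha_2})$, so that the error from the floor is $O(\alpha_2)$ relative to the main term and vanishes in the limit; then the remaining sum over $x_1$ is again an explicit geometric-type series (now with modified exponential rates $h\alpha_1$ and $h\alpha_1 + h\alpha_2\rho = h\alpha_1(1+t)$ after using $\alpha_2\rho=\alpha_2 t\, n_2/n_1 = \alpha_1 t$, since $\alpha_2 n_2=\alpha_1 n_1$). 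This last identity is exactly what makes the slope parameter $t$ enter the limit only through the combination appearing in $g^*(t)$, so the bulk of the work is verifying that $\alpha_2\rho=\alpha_1 t$ and then matching the resulting closed-form limit
\begin{equation*}
\frac{1}{\kappa^3}\,\frac{A(2)}{\zeta(2)}\left(\frac{1}{1}-\frac{1}{(1+t)^2}\right)=1-\frac{1}{(1+t)^2}=\frac{t^2+2t}{(1+t)^2}=g_1^*(t)
\end{equation*}
against \eqref{eq:g*}, using $A(2)/(\kappa^3\zeta(2))=1$ from \eqref{eq:kappa}.
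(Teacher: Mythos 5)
Your proposal follows essentially the same route as the paper's proof of Theorem~\ref{th:3.2}: apply M\"obius inversion via Lemma~\ref{lm:Mobius} to the truncated sum, evaluate the inner geometric series over $x_2\le\hat{x}_2=\lfloor t\,(n_2/n_1)\,x_1\rfloor$ in closed form, invoke the key scaling identity $\alpha_2\, t\,(n_2/n_1)=\alpha_1 t$ from \eqref{eq:alpha2:1} so that the $n$-dependence collapses, control the rounding error coming from the floor, and pass to the limit by dominated convergence (using $A^+(2)<\infty$, the bound \eqref{eq:exp_bound}, $\sum_m\mu(m)m^{-2}=1/\zeta(2)$, and the normalization \eqref{eq:kappa}) to land on $1-(1+t)^{-2}=g^*_1(t)$, exactly as in the paper's display \eqref{UE_1}. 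One notational slip worth fixing: since $F_t^{(0)}(h)$ already carries a factor of $h$, the Möbius-inverted representation should be $\EE_z[\xi_1(t)]=\sum_{k,m}a_k\,\mu(m)\,F_t^{(0)}(km)$ without the extra $k$ and $m$ prefactors; and the Riemann-sum digression is dispensable, as your later sandwich argument on the geometric sum already produces the exact closed form the paper uses.
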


\begin{proof}
Let $j=1$ (the case $j=2$ is considered in a similar manner).
Theorem \ref{th:delta12} implies that the claim \eqref{sh} holds for
$t=\infty$ (with $\xi_1(\infty)=\xi_1$). Thus, noting from
\eqref{eq:g*} that $g^*_1(\infty)=1$ and $1-g^*_1(t)=(1+t)^{-2}$, we
can rewrite \eqref{sh} (with $j=1$) in the form
\begin{equation}\label{sh1}
\lim_{n\to\infty}
n_1^{-1}\myn\EE_{z}[\myp\xi_1-\xi_1(t)]=(1+t)^{-2}.
\end{equation}

Now, like in the proof of Theorem \ref{th:delta12} (cf.\
\eqref{E_i'}, \eqref{E_1F} and~\eqref{E_1}), from \eqref{eq:E(t)} we
have
\begin{align}
\notag
\EE_{z}[\myp\xi_1-\xi_1(t)]&=\sum_{k=1}^{\infty} k
a_{k}\sum_{x\in
\calX\setminus \calX(t)}x_1\myp \rme^{-k\alpha_1x_1}\mypp \rme^{-k\alpha_2x_2}\\
& =\sum_{k,\myp m=1}^\infty k a_{k}\myp m\mu(m)\sum_{x_1=1}^\infty
x_1\myp\rme^{-km\alpha_1x_1}\sum_{x_2=\hat x_2+1}^{\infty}
\rme^{-km\alpha_2x_2}
\notag\\
\label{Z1} &=\sum_{k,\myp m=1}^\infty \frac{k a_{k}\myp
m\mu(m)}{1-\rme^{-km\alpha_2}}\sum_{x_1=1}^\infty
x_1\myp\rme^{-km\myp(\alpha_1x_1+\alpha_2(\hat{x}_2+1))},
\end{align}
where $\hat{x}_2=\hat{x}_2(t):= \lfloor t\myp \tau x_1\rfloor$, so
that
\begin{equation}\label{eq:x*}
0<\hat{x}_2+1-t\myp\tau x_1\le 1.
\end{equation}
It is natural to expect that the internal sum in \eqref{Z1} may be
well approximated by replacing $\hat{x}_2+1$ with $t\myp \tau x_1$
and thus reducing it to $S_2(km(\alpha_1+\alpha_2\myp t\myp\tau))$
(see the notation~\eqref{eq:q} with $q=2$). More precisely,
recalling that $\alpha_2\mypp\tau=\alpha_1$
(see~\eqref{eq:alpha2:1}), we obtain the representation
\begin{equation}\label{eq:S+R}
\sum_{x_1=1}^\infty
x_1\myp\rme^{-km\myp(\alpha_1x_1+\alpha_2(\hat{x}_2+1))}=
S_2(km\myp\alpha_1(1+t))-R_n(t;km),
\end{equation}
with
\begin{align}
\label{eq:Rt} R_n(t;km):={}& \sum_{x_1=1}^\infty
x_1\myp\rme^{-km\alpha_1x_1(1+t)}\!
\left(1-\rme^{-km\alpha_2\myp(\hat{x}_2 +1 -t\tau x_1)}\right).
\end{align}
By the expression \eqref{eq:Sq-exact} for $S_2(\cdot)$ we have
\begin{equation}\label{eq:S1}
0\le S_2(km\myp\alpha_1(1+t))
=\frac{\rme^{-km\alpha_1(1+t)}}{(1-\rme^{-km\alpha_1(1+t)})^2}\le
\frac{\rme^{-km\alpha_1}}{(1-\rme^{-km\alpha_1})^2}\myp.
\end{equation}
On the other hand, applying the upper inequality \eqref{eq:x*} under
the second exponent in \eqref{eq:Rt} and replacing $1+t$ by $1$
under the first exponent, we obtain the estimates
\begin{align}
\notag 0\le R_n(t;km) &\le (1-\rme^{-km\alpha_2})\sum_{x_1=1}^\infty
x_1\myp\rme^{-km\alpha_1x_1}\\
\label{eq:R1}
&=\frac{(1-\rme^{-km\alpha_2})\mypp\rme^{-km\alpha_1}}{(1-\rme^{-km\alpha_1\myn})^2}\\
\label{eq:R2}
&\le\frac{\rme^{-km\alpha_1}}{(1-\rme^{-km\alpha_1\myn})^2}\myp.
\end{align}

On substituting \eqref{eq:S+R} back into \eqref{Z1}, from the bounds
\eqref{eq:S1} and \eqref{eq:R2} it is evident that we can repeat the
arguments used in the proof of Theorem \ref{th:delta12}
(see~\eqref{eq:2,1}) and thus pass to the limit in \eqref{Z1} by
Lebesgue's dominated convergence theorem, giving
\begin{equation}\label{limE}
\lim_{n\to\infty} \alpha_1^2\alpha_2\myp
\EE_{z}[\myp\xi_1-\xi_1(t)]= \sum_{k,\myp m=1}^\infty  k a_{k}\myp
m\mu(m) \lim_{n\to\infty}\frac{\alpha_1^{2}\alpha_2\mypp
\bigl(S_2(km\myp\alpha_1(1+t))-R_n(t;km)\bigr)}
{1-\rme^{-km\alpha_2}}\myp.
\end{equation}
By virtue of the equality in \eqref{eq:S1} we easily find
\begin{equation}
\label{zeta320*} \lim_{n\to\infty}\frac{\alpha_1^{2}\alpha_2\mypp
S_2(km\myp\alpha_1(1+t)) } {1-\rme^{-km\alpha_2}}=\lim_{n\to\infty}
\frac{\alpha_1^{2}\alpha_2\mypp\rme^{-km\alpha_1(1+t)}}
{(1-\rme^{-km\alpha_2})\myp(1-\rme^{-km\alpha_1(1+t)})^2}=
\frac{1}{k^3\myp m^3\myp(1+t)^2}\myp.
\end{equation}
Furthermore, the estimate \eqref{eq:R1} implies
\begin{equation}\label{zeta320**}
\lim_{n\to\infty}\frac{\alpha_1^{2}\alpha_2\mypp R_n(t;km)}
{1-\rme^{-km\alpha_2}}\le
\lim_{n\to\infty}\frac{\alpha_1^{2}\alpha_2\mypp \rme^{-km\alpha_1}}
{(1-\rme^{-km\alpha_1\myn})^2} =0.
\end{equation}
Hence, substituting \eqref{zeta320*} and \eqref{zeta320**} into
\eqref{limE}, we obtain (cf.~\eqref{zeta32*})
\begin{equation}\label{eq:lim(t)}
\lim_{n\to\infty} \alpha_1^2\alpha_2\myp
\EE_{z}[\myp\xi_1-\xi_1(t)]= \sum_{k=1}^\infty
\frac{a_{k}}{k^2}\sum_{m=1}^\infty \frac{\mu(m)}{m^2}\,(1+t)^{-2}=
\kappa^3\myp(1+t)^{-2}.
\end{equation}
Finally, recalling that $\alpha_1^2\alpha_2=n_1^{-1}\myn\kappa^3$
(see~\eqref{eq:alpha2:1}), the limit \eqref{eq:lim(t)} is reduced
to~\eqref{sh1}.
\end{proof}

\subsubsection{Enhancement: uniform convergence.}
There is a stronger version of Theorem~\ref{th:3.2}.
\begin{theorem}\label{th:8.1.1a}
The convergence in\/ \eqref{sh} is uniform in\/ $t\in[0,\infty]$,
that is,
\begin{equation*}
\lim_{n\to\infty}\sup_{0\le t\le\infty} \bigl|\myp
n_j^{-1}\myn\EE_{z}[\myp\xi_j(t)]-g^*_j(t)\bigr|=0\qquad (j=1,2).
\end{equation*}
\end{theorem}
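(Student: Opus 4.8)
The plan is to upgrade the pointwise convergence of Theorem~\ref{th:3.2} to uniform convergence via a monotonicity-plus-continuity argument of Dini type. First I would observe that for each fixed $n$, the map $t\mapsto n_j^{-1}\EE_z[\xi_j(t)]$ is nondecreasing in $t$, since by \eqref{xi(t)} enlarging $t$ only enlarges the index set $\calX(t)$ over which the nonnegative terms $x_j\myp\nu(x)$ are summed; likewise the limit function $g_j^*(t)$ is nondecreasing and, crucially, continuous on the compact interval $[0,\infty]$ (this is evident from the explicit formula \eqref{eq:g*}, including at the endpoints $t=0$ and $t=\infty$). We therefore have a sequence of monotone functions on a compact interval converging pointwise to a continuous monotone limit.

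The key step is then the classical fact that pointwise convergence of monotone functions to a \emph{continuous} limit on a compact interval is automatically uniform (a version of Dini's theorem, or equivalently P\'olya's theorem on convergence of distribution functions). Concretely: fix $\varepsilon>0$; by uniform continuity of $g_j^*$ choose a finite partition $0=t_0<t_1<\dots<t_N=\infty$ such that $g_j^*(t_{i})-g_j^*(t_{i-1})<\varepsilon$ for all $i$; by Theorem~\ref{th:3.2} choose $n$ large enough that $|n_j^{-1}\EE_z[\xi_j(t_i)]-g_j^*(t_i)|<\varepsilon$ simultaneously at the finitely many nodes $t_0,\dots,t_N$; then for arbitrary $t\in[t_{i-1},t_i]$, monotonicity sandwiches $n_j^{-1}\EE_z[\xi_j(t)]$ between $n_j^{-1}\EE_z[\xi_j(t_{i-1})]$ and $n_j^{-1}\EE_z[\xi_j(t_i)]$, and the same sandwiching holds for $g_j^*(t)$; combining these with the bound at the nodes gives $|n_j^{-1}\EE_z[\xi_j(t)]-g_j^*(t)|<2\varepsilon$ uniformly in $t$. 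This proves the claim for $j=1,2$.

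The main obstacle—and it is a mild one—is handling the endpoint $t=\infty$ cleanly, i.e.\ making sure the compactification $[0,\infty]$ is treated honestly: one should either work with the reparametrization $t\mapsto t/(1+t)$ mapping $[0,\infty]$ homeomorphically onto $[0,1]$, or simply note directly that $g_j^*$ extends continuously to $t=\infty$ with $g_1^*(\infty)=g_2^*(\infty)=1$ and that Theorem~\ref{th:delta12} already gives convergence at $t=\infty$, so the node $t_N=\infty$ causes no difficulty. A minor secondary point is to confirm the monotonicity of $t\mapsto\EE_z[\xi_j(t)]$ rigorously from \eqref{xi(t)}; this is immediate since $\calX(t)\subseteq\calX(t')$ whenever $t\le t'$ and all summands are nonnegative, so no quantitative estimates beyond those already in the proof of Theorem~\ref{th:3.2} are needed.
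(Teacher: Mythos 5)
Your proposal is correct and shares the core idea of the paper's proof: monotonicity of $t\mapsto n_j^{-1}\EE_z[\xi_j(t)]$ combined with continuity of the monotone limit $g_j^*$ upgrades pointwise convergence to uniform convergence (the paper encapsulates this as Lemma~\ref{lm:8.1}, a P\'olya-type criterion; your ``Dini'' framing is the same result). The genuine difference is in the treatment of the endpoint $t=\infty$. The paper applies Lemma~\ref{lm:8.1} only on finite intervals $[0,t^*]$ and then separately controls the tail $t\ge t^*$ by a quantitative estimate of $n_1^{-1}\EE_z[\xi_1(\infty)-\xi_1(t)]$, derived from the M\"obius/integral representation of $\EE_z[\xi_1(t)]$ under $A^+(2)<\infty$. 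You instead run the finite-partition sandwich argument directly on the compactified interval $[0,\infty]$, taking $t_N=\infty$ as one of the nodes and invoking the pointwise convergence there already supplied by Theorem~\ref{th:3.2}. Since $g_j^*$ is continuous and monotone with $g_j^*(\infty)=1$, such a partition exists, and the monotone sandwich on $[t_{N-1},\infty]$ absorbs the tail, making the paper's separate quantitative estimate unnecessary. Both routes are sound; yours is marginally shorter and relies only on pointwise convergence, at no extra cost in hypotheses.
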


We use the following simple criterion of uniform convergence proved
in \cite[Lemma 4.3]{BZ4}.

\begin{lemma}\label{lm:8.1}
Let\/ $\{f_n(t)\}$ be a sequence of nondecreasing functions on a
finite interval\/ $[a,b\myp]$, such that, for each\/
$t\in[a,b\myp]$, $\lim_{n\to\infty}f_n(t)=f(t)$, where\/ $f(t)$ is a
continuous \textup{(}nondecreasing\textup{)} function on\/
$[a,b\myp]$. Then the convergence\/ $f_n(t)\to f(t)$ as\/
$n\to\infty$ is uniform on\/ $[a,b\myp]$.
\end{lemma}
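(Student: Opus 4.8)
\textbf{Proof proposal for Lemma \ref{lm:8.1}.}

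The plan is to derive uniform convergence from pointwise convergence by exploiting monotonicity together with the uniform continuity of the limit, in the classical manner of Dini/Pólya. First I would fix $\varepsilon>0$. Since $f$ is continuous on the compact interval $[a,b\myp]$, it is uniformly continuous there, so I can choose a partition $a=t_0<t_1<\dots<t_M=b$ fine enough that $f(t_{i})-f(t_{i-1})<\varepsilon$ for every $i=1,\dots,M$ (this uses that $f$ is nondecreasing, so the oscillation of $f$ on $[t_{i-1},t_i]$ equals $f(t_i)-f(t_{i-1})$). Next, by pointwise convergence at each of the finitely many nodes $t_0,\dots,t_M$, I can pick $N$ so large that $|f_n(t_i)-f(t_i)|<\varepsilon$ for all $n\ge N$ and all $i=0,\dots,M$.

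The main step is then the ``sandwiching'' argument. For an arbitrary $t\in[a,b\myp]$, locate the subinterval $[t_{i-1},t_i]$ containing $t$. Using that each $f_n$ is nondecreasing, $f_n(t_{i-1})\le f_n(t)\le f_n(t_i)$. Combining this with the node estimates gives
\begin{equation*}
f_n(t)\le f_n(t_i)< f(t_i)+\varepsilon \le f(t)+ \bigl(f(t_i)-f(t_{i-1})\bigr)+\varepsilon < f(t)+2\varepsilon,
\end{equation*}
where I used $f(t)\ge f(t_{i-1})\ge f(t_i)-\varepsilon$; and symmetrically
\begin{equation*}
f_n(t)\ge f_n(t_{i-1})> f(t_{i-1})-\varepsilon \ge f(t) - \bigl(f(t_i)-f(t_{i-1})\bigr)-\varepsilon > f(t)-2\varepsilon.
\end{equation*}
Hence $|f_n(t)-f(t)|<2\varepsilon$ for all $n\ge N$, uniformly in $t\in[a,b\myp]$, which is the claim (with $\varepsilon$ replaced by $\varepsilon/2$ at the outset if one wishes a clean bound).

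I do not expect any genuine obstacle here; the only point requiring a little care is organizing the inequalities so that the monotonicity of $f_n$ is used to pass from an arbitrary $t$ to the neighboring grid points, while the monotonicity and continuity of $f$ control the error incurred in that passage. Note that finiteness of $[a,b\myp]$ is essential (it guarantees the finite partition and the uniform continuity of $f$); this is exactly why, in the application to Theorem \ref{th:8.1.1a}, one first reparameterizes $t\in[0,\infty]$ to a compact interval before invoking the lemma.
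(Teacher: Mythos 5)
Your argument is correct and is the standard Dini/P\'olya-type proof (fine partition via uniform continuity of $f$, pointwise convergence at the finitely many nodes, then sandwiching via monotonicity of $f_n$); the paper does not reprove the lemma but cites \cite[Lemma 4.3]{BZ4}, where the same argument is given. One small aside: in the application to Theorem \ref{th:8.1.1a} the paper does not compactify $[0,\infty]$ before invoking the lemma, but rather applies it on finite intervals $[0,t^*]$ and controls the tail $t\ge t^*$ by a separate expectation estimate; this does not affect the correctness of your proof of the lemma itself.
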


\begin{proof}[Proof\/ of\/ Theorem\/ \textup{\ref{th:8.1.1a}}] Suppose that
$j=1$ (the case $j=2$ is handled similarly). Note that for each
$n=(n_1,n_2)$ (with $n_1>0$) the function
\begin{equation*}
f_n(t):=n_1^{-1}\myn\EE_{z}[\myp\xi_1(t)]=\frac{1}{n_1} \sum_{x\in
\calX(t)} x_1\myp \EE_{z}[\nu(x)],\qquad t\in[0,\infty],
\end{equation*}
is nondecreasing in $t$, in view of the definition \eqref{eq:X_n} of
the sets $\calX(t)$. Therefore, by Lemma \ref{lm:8.1} the
convergence in \eqref{sh} is uniform on any finite interval
$[0,t_0]$.

For large $t$, by the triangle inequality we get
\begin{equation}\label{eq:triangle}
|\myp n_1^{-1}\myn\EE_{z}[\myp\xi_1(t)]-g_1^*(t)|\le |\myp
n_1^{-1}\myn\EE_{z}(\xi_1)-1|+|\myp g_1^*(t)-1|+
n_1^{-1}\myn\EE_{z}[\myp\xi_1-\xi_1(t)]
\end{equation}
(in the last term, $\xi_1\ge \xi_1(t)$ for all $t\ge0$). We know
that $\lim_{n\to\infty} n_1^{-1}\myn\EE_{z}(\xi_1)=1$ by Theorem
\ref{th:3.2} and $\lim_{t\to\infty} g_1^*(t)=1$ (see~\eqref{eq:g*});
thus, in view of \eqref{eq:triangle} it remains to show that for any
$\varepsilon>0$ there is a $t_0=t_0(\varepsilon)$ such that, for all
large enough $n=(n_1,n_2)$ and all $t\ge t_0$,
\begin{equation}\label{eq:E<epsilon}
n_1^{-1}\myn\EE_{z}[\myp\xi_1-\xi_1(t)]\le \varepsilon.
\end{equation}
To this end, from the formulas \eqref{Z1} and  \eqref{eq:S+R} we
have
\begin{equation}\label{eq:Ez}
0\le \EE_{z}[\myp\xi_1-\xi_1(t)]\le\sum_{k,\myp m=1}^\infty \frac{k
|a_{k}|\myp m}{1-\rme^{-km\alpha_2}}\myp\bigl(
S_2(km\myp\alpha_1(1+t))+R_n(t;km)\bigr).
\end{equation}
For the part of the sum \eqref{eq:Ez} with
$S_2(km\myp\alpha_1(1+t))$, on substituting the equality
\eqref{eq:S1} and adapting the estimate \eqref{eq:2,1} derived in
the proof of Theorem \ref{th:delta12} we obtain for all $k,m\ge1$
and $t>0$
\begin{align*}
\frac{\alpha_1^2\alpha_2\myp
S_2(km\myp\alpha_1(1+t))}{1-\rme^{-km\alpha_2}}
&=\frac{\alpha_1^2\alpha_2\mypp \rme^{-km\alpha_1(1+t)}}
{(1-\rme^{-km\alpha_2})\myp(1-\rme^{-km\alpha_1(1+t)})^2} \le
\frac{C_1(\tau_*/2)\myp C_2(1/2)}{\mypp (k\myp
m)^{3}\myp(1+t)^{2}}\myp.
\end{align*}
Therefore, recalling that $\alpha_1^2\alpha_2=n_1^{-1}\myn\kappa^3$
(see~\eqref{eq:alpha2:1}) and using the condition $A^+(2)<\infty$,
we have uniformly in $t$ (and for all $n$)
\begin{equation}\label{eq:|S|}
\frac{1}{n_1}\sum_{k,\myp m=1}^\infty \frac{k |a_{k}|\myp
m}{1-\rme^{-km\alpha_2}}\mypp
S_2(km\myp\alpha_1(1+t))= \frac{O(1)}{(1+t)^2}\sum_{k,\myp
m=1}^\infty \frac{|a_{k}|}{k^2\myp m^2}=\frac{O(1)}{(1+t)^2}\le
\frac{\varepsilon}{2}\myp,
\end{equation}
provided $t$ is large enough.

On the other hand, by the dominated convergence argument
(cf.~\eqref{limE}) and due to the bound \eqref{eq:R1} leading to the
limit \eqref{zeta320**}, the contribution from $R_n(t;km)$ to the
sum \eqref{eq:Ez} is asymptotically negligible, uniformly in $t$,
which implies that for all $n$ large enough,
\begin{equation}\label{eq:|R|}
\frac{1}{n_1}\sum_{k,\myp m=1}^\infty \frac{k |a_{k}|\myp
m}{1-\rme^{-km\alpha_2}}\mypp R_n(t;km)\le
\frac{\varepsilon}{2}\myp.
\end{equation}
Thus, substituting the estimates \eqref{eq:|S|} and \eqref{eq:|R|}
into \eqref{eq:Ez}  yields \eqref{eq:E<epsilon} as desired, which
completes the proof of the theorem.
\end{proof}

\subsection{Refined asymptotics of the expectation}\label{sec3.3}

We need to sharpen the asymptotic estimate $\EE_z(\xi)-n=o(|n|)$
provided by Theorem~\ref{th:delta12}.

\begin{theorem}\label{th:4.1}
Under the condition $A^+(1)<\infty$, we have
$\EE_{z}(\xi)-n=O(|n|^{2/3})$ as $n\to\infty$.
\end{theorem}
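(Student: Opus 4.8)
The plan is to compare the exact expression for $\EE_z(\xi_1)$ obtained in the proof of Theorem \ref{th:delta12} (see \eqref{E_1}) against its limit $n_1$, and to control the difference by a more careful expansion of the summands near the point where the dominated convergence argument was applied. Recall from \eqref{E_1} that
\begin{equation*}
\EE_{z}(\xi_1)= \sum_{k,\myp m=1}^\infty k a_{k}\mypp m\myp\mu(m)\mypp
\frac{\rme^{-km\alpha_1}}{(1-\rme^{-km\alpha_1})^2\myp(1-\rme^{-km\alpha_2})}\myp,
\end{equation*}
and that by \eqref{eq:alpha2:1} we have $\alpha_1^2\alpha_2 n_1=\kappa^3$, while \eqref{zeta32} identifies the leading term as $\kappa^{-3}A(2)\zeta(2)^{-1}n_1=n_1$. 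So the task is to show that
\begin{equation*}
\sum_{k,\myp m=1}^\infty k a_{k}\mypp m\myp\mu(m)\left(
\frac{\rme^{-km\alpha_1}}{(1-\rme^{-km\alpha_1})^2\myp(1-\rme^{-km\alpha_2})}
-\frac{1}{(km)^3\myp\alpha_1^2\alpha_2}\right)
=O\bigl(\alpha_1^{-2}\bigr),
\end{equation*}
since $\alpha_1^{-2}\asymp n_1^{2/3}\asymp|n|^{2/3}$ by \eqref{alpha} and Assumption \ref{as:c}.

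First I would introduce the single-variable function
$h(u):=u^{-1}\rme^{-u}(1-\rme^{-u})^{-2}$, so that the generic summand equals $(km)^{-1}\alpha_1^{-2}\,h(km\alpha_1)\cdot u\rme^{-u}(1-\rme^{-u})^{-1}\big|_{u=km\alpha_2}\cdot(km\alpha_2)^{-1}$; more cleanly, write the product of the two $\alpha$-factors as $G(km\alpha_1,km\alpha_2)$ where $G(u,v)=\rme^{-u}(1-\rme^{-u})^{-2}(1-\rme^{-v})^{-1}$, and observe $G(u,v)=u^{-2}v^{-1}+R(u,v)$ with $R(u,v)=O(u^{-1}v^{-1})+O(1)$ uniformly, and in fact $R(u,v)=-u^{-2}+O(v^{-1})\cdot(\text{bounded})+\dots$ — the point is that the Laurent expansions of $\rme^{-u}(1-\rme^{-u})^{-2}=u^{-2}-\tfrac{1}{12}+O(u^2)$ and $(1-\rme^{-v})^{-1}=v^{-1}+\tfrac12+O(v)$ near the origin give
\begin{equation*}
G(u,v)=\frac{1}{u^2 v}+\frac{1}{2u^2}-\frac{1}{12 v}+O(1)
\end{equation*}
for $u,v$ bounded, while for $u$ or $v$ large $G$ decays exponentially, so globally $G(u,v)-(u^2v)^{-1}$ is bounded by $C\min\{u^{-2},1\}+C\min\{v^{-1},1\}$ plus an exponentially small remainder. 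Substituting $u=km\alpha_1$, $v=km\alpha_2$ and using \eqref{eq:alpha2:1} to convert $\alpha_1^{-2}$ and $\alpha_2^{-1}$ into powers of $n_1$ (both $\asymp\alpha_1^{-2}$ after multiplying by the overall factor, since $\alpha_2\asymp\alpha_1$ by \eqref{delta12} and Assumption \ref{as:c}), one finds that the correction term contributes
\begin{equation*}
O\bigl(\alpha_1^{-2}\bigr)\sum_{k,\myp m=1}^\infty |a_k|\myp\frac{m}{km}\cdot\frac{1}{km}
= O\bigl(\alpha_1^{-2}\bigr)\sum_{k=1}^\infty\frac{|a_k|}{k}\sum_{m=1}^\infty\frac{1}{m^2}
= O\bigl(\alpha_1^{-2}\bigr),
\end{equation*}
where the $k$-sum converges by the hypothesis $A^+(1)<\infty$ and the $m$-sum is $\zeta(2)$. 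The same bookkeeping applied to $\xi_2$ (with the roles of $\alpha_1,\alpha_2$ swapped, using $\alpha_1^2\alpha_2\asymp\alpha_1\alpha_2^2\asymp n_2^{-2/3}$) gives $\EE_z(\xi_2)-n_2=O(n_2^{2/3})=O(|n|^{2/3})$.

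The one subtlety — and the main thing to get right — is that the $m$-summation must be handled \emph{before} discarding the M\"obius factor: the dominating series $\sum_m m^{-2}|\mu(m)|\le\sum_m m^{-2}$ is what makes the bound work, and the exact cancellation producing the leading term $n_1$ relies on $\sum_m\mu(m)m^{-2}=\zeta(2)^{-1}$ (see \eqref{eq:zeta^{-1}}), so one should keep $\mu(m)$ attached to the $(km)^{-3}$ term and bound only the \emph{difference} $G(km\alpha_1,km\alpha_2)-(km)^{-3}\alpha_1^{-2}\alpha_2^{-1}$ in absolute value. I would also double-check that replacing the Laurent remainder "$O(1)$" for bounded arguments by a genuinely integrable (against $\sum_k|a_k|k^{-1}\sum_m\cdot$) majorant requires only $A^+(1)<\infty$ rather than $A^+(2)<\infty$: indeed the worst term after extracting $(u^2v)^{-1}$ behaves like $u^{-2}\cdot 1=(km\alpha_1)^{-2}$, contributing $\alpha_1^{-2}\sum_k k^{-2}|a_k|\sum_m m^{-2}$, which is even better, so the binding constraint is the term $\asymp\alpha_1^{-2}\sum_k k^{-1}|a_k|\sum_m m^{-2}$ coming from the $-\tfrac{1}{12v}$ piece, whence the hypothesis $A^+(1)<\infty$ is exactly what is needed and no more.
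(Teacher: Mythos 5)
Your approach is genuinely different from the paper's: the paper passes to the Mellin transform of $F(h)$ and shifts the contour from $\Re s\in(1,2)$ to $\Re s=1$ (Lemma~\ref{lm:Delta1} and (\ref{dif4})), whereas you attempt a direct Laurent expansion of the summand in (\ref{E_1}). The idea can in principle be made to work, but as written it has a genuine gap. There is an arithmetical slip in the final display ($\tfrac{m}{km}\cdot\tfrac{1}{km}=\tfrac{1}{k^2m}$, not $\tfrac{1}{km^2}$, so the $m$-sum would be $\sum m^{-1}=\infty$), and more importantly the weights are miscounted. After reinstating the factor $k\mypp a_k\mypp m\mypp\mu(m)$ from (\ref{E_1}), the $\tfrac12 u^{-2}$ piece of the Laurent remainder (with $u=km\alpha_1$) contributes $\tfrac{a_k\myp\mu(m)}{2km\alpha_1^2}$ over the range $km\lesssim\alpha_1^{-1}$; bounding $|\mu(m)|\le1$ as you effectively do yields $\tfrac{1}{2\alpha_1^2}\sum_k\tfrac{|a_k|}{k}\sum_{m\lesssim1/(k\alpha_1)}\tfrac1m=O(\alpha_1^{-2}|\ln\alpha_1|)=O(|n|^{2/3}\ln|n|)$, weaker than $O(|n|^{2/3})$. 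This $u^{-2}$ piece, not the $v^{-1}$ piece, is the binding one, so your closing remark identifies the wrong worst term: the $-\tfrac1{12}v^{-1}$ piece contributes $\tfrac{a_k\mu(m)}{12\alpha_2}$ over $km\lesssim\alpha_1^{-1}$, hence $\asymp\tfrac1{\alpha_1\alpha_2}\sum_k\tfrac{|a_k|}{k}=O(\alpha_1^{-2})$ already under $A^+(1)<\infty$.

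To remove the logarithm you must actually use the M\"obius cancellation rather than just flag it: e.g.\ the elementary bound $\bigl|\sum_{m\le M}\mu(m)/m\bigr|\le 1$ (Hardy--Wright) turns the inner $m$-sum for the $u^{-2}$ piece into $O(1)$ and recovers $O(\alpha_1^{-2})$. You warn that one ``should keep $\mu(m)$ attached'', but then estimate everything in absolute value, which is precisely what produces the $\ln$. The paper's contour-shift proof is engineered to avoid this entirely: the $m$-summation is encapsulated once and for all in the factor $\zeta(s)^{-1}$ of (\ref{dif4}) (via (\ref{eq:zeta^{-1}})), and on the line $\Re s=1$ one only needs $\zeta(1+\rmi t)\ne0$ together with $\zeta(1+\rmi t)^{-1}=O(\ln|t|)$, giving a clean $O(\alpha_1^{-2})$ with no loss in $n$ and under exactly the hypothesis $A^+(1)<\infty$.
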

For the proof of Theorem \ref{th:4.1}, some preparations are
required.

\subsubsection{Integral approximation of sums.}\label{sec3.3.1}
Let a function $f\colon \RR^2_+\to\RR_+$ be continuous and
integrable on $\RR^2_+$\myp, together with its partial derivatives
up to the second order. Set (cf.~\eqref{F_sharp})
\begin{equation}\label{F0}
F(h):=\sum_{x\in\ZZ^2_+}f(hx), \qquad h>0.
\end{equation}
Adapting the well-known Euler--Maclaurin formula (see, e.g.,
\cite[\S12.2]{Cramer}) to the double summation in \eqref{F0}, one
can verify (see more details in \cite[\S\myp5.1]{BZ4}) that the
above conditions on the function $f(x)$ ensure the absolute
convergence of the double series \eqref{F0} for any $h>0$ and,
moreover, $F(h)$ has the following asymptotics at the origin,
\begin{equation}\label{eq:EM}
\lim_{h\downarrow 0} \mypp h^2 F(h) = \int_{\RR^2_+}\mynn
f(x)\,\dif{x}<\infty.
\end{equation}
In particular, \eqref{eq:EM} implies that
\begin{equation}\label{beta_h->0}
F(h)=O(h^{-2}),\qquad h\downarrow 0.
\end{equation}
Furthermore, assume that for some $\beta>2$
\begin{equation}\label{beta_h->infty}
F(h)=O(h^{-\beta}),\qquad h\to+\infty,
\end{equation}
and consider the Mellin transform of $F(h)$ (see, e.g.,
\cite[Ch.\,VI, \S\myp9]{Widder}),
\begin{equation}\label{Mel}
\widehat{F}(s):=\int_0^\infty\mynn h^{s-1}F(h)\,\dif{}h\myp \qquad
(s\in\CC\myp).
\end{equation}
The estimates \eqref{beta_h->0}, \eqref{beta_h->infty} ensure that
the function $\widehat{F}(s)$ is well defined (and analytic) if
$2<\Re\myp(s)<\beta$. Moreover, $\widehat{F}(s)$ can be analytically
continued into the strip $1<\Re\myp(s)<2$. More precisely, consider
the function
\begin{equation}\label{Delta}
\varDelta_f(h):=F(h)-h^{-2}\!\int_{\RR^2_+}\myn f(x)\,\dif{x},
\qquad h>0,
\end{equation}
that is, the error in the approximation of the function $F(h)$ by
the corresponding integral (cf.~\eqref{eq:EM}). The following lemma
was proved in \cite[Lemma~5.2]{BZ4}.
\begin{lemma}\label{lm:Delta1}
Under the above conditions, the function\/ $\widehat{F}(s)$ defined
in \eqref{Mel} is meromorphic in the strip\/ $1<\Re\myp(s)<\beta$,
with a single\/ \textup{(}\myn{}simple\myp\textup{)} pole at\/
$s=2$. Moreover, $\widehat{F}(s)$ satisfies the identity
\begin{equation}\label{Muntz2}
\widehat{F}(s)=\int_0^\infty\mynn
h^{s-1}\varDelta_f(h)\,\dif{}h\myp,\qquad 1<\Re\myp(s)<2.
\end{equation}
\end{lemma}

\begin{remark}
The identity \eqref{Muntz2} is a two-dimensional analog of the
M\"untz formula for univariate functions (see \cite[\S\myp2.11, pp.\
28--29]{Titch1}).
\end{remark}

In turn, by the inversion formula for the Mellin transform (see
\cite[Theorem 9a, pp.\ 246--247]{Widder}), from \eqref{Muntz2} it
follows that, for any $c\in(1,2)$,
\begin{equation}\label{eq:inverse}
\varDelta_f(h)=\frac{1}{2\pi \rmi}\int_{c-\rmi\infty}^{c+\rmi\infty}
\myn h^{-s} \widehat{F}(s)\,\dif{}s
\end{equation}
(see \cite[Lemma 5.3]{BZ4} for more details).

\subsubsection{Proof\/ of\/ Theorem\/ \textup{\ref{th:4.1}}.}\label{sec3.3.2}

Our argumentation follows the same lines as in the proof of a
similar result in \cite[\S\myp5.2]{BZ4} for the special case of the
coefficients \eqref{eq:b-k-r} (with~$\rho=1$), but adapted to a more
general context based on the cumulant expansions. To be specific,
let us consider the coordinate $\xi_1$ of the random vector
$\xi=(\xi_1,\xi_2)$ (for $\xi_2$ the proof is similar).

\subsubsection*{Step\/ \textup{1}.} According to \eqref{E_1} we have
\begin{equation}\label{E_z''}
\EE_{z}(\xi_1)=\sum_{k,\myp m=1}^\infty a_{k}\myp \mu(m) F(km),
\end{equation}
where $F(h)$ is given by~\eqref{F}.
Note that the corresponding function
$f(x)=x_1\myp\rme^{-\langle\alpha,\myp x\rangle}$ (see~\eqref{f+})
has the property
\begin{equation}\label{eq:int(f)=}
\int_{\RR^2_+}\myn f(x)\,\dif{}x=\int_0^\infty\mynn
x_1\myp\rme^{-\alpha_1x_1}\,\dif{}x_1\int_0^\infty\mynn
\rme^{-\alpha_2\myp x_2}\,\dif{}x_2= \frac{1}{\alpha_1^2\myp
\alpha_2}\myp.
\end{equation}
Moreover, by virtue of the relation
$\alpha_1^2\alpha_2=\kappa^3/n_1$
(see~\eqref{eq:alpha2:1}) we have (cf.~\eqref{zeta32*})
\begin{equation}\label{eq:int(f)}
\frac{1}{\alpha_1^2\alpha_2}\sum_{k,\myp m=1}^\infty \frac{a_{k}\myp
\mu(m)}{k^2\myp m^2}= \frac{n_1}{\kappa^3}\sum_{k=1}^\infty
\frac{a_{k}}{k^2}\sum_{m=1}^\infty \frac{\mu(m)}{m^2}\equiv n_1.
\end{equation}
Thus, subtracting \eqref{eq:int(f)} from \eqref{E_z''} and
substituting \eqref{eq:int(f)=} we obtain the representation
\begin{equation}\label{dif1}
\EE_{z}(\xi_1)-n_1=\sum_{k,\myp m=1}^\infty a_{k}\myp\mu(m)\mypp
\varDelta_{f}(km),
\end{equation}
where $\varDelta_f(h)$ is defined in~\eqref{Delta}.

\subsubsection*{Step\/ \textup{2}.}
Recalling the notation $\tau=n_2/n_1$ and the relation
$\alpha_2=\alpha_1/\tau$ (see \eqref{eq:kappa}
and~\eqref{eq:alpha2:1}), the Mellin transform \eqref{Mel} of the
function $F(h)$ may be represented in the form
\begin{equation}\label{eq:F-tilde}
\widehat{F}(s)=\alpha_1^{-s-1} \tilde{F}(s),
\end{equation}
where
\begin{equation}\label{eq:M_alpha}
\tilde{F}(s)=
\int_0^\infty \frac{y^s\myp
\rme^{-y}}{(1-\rme^{-y})^2\mypp(1-\rme^{-y/\tau})}\,\dif{}y,\qquad
\Re\myp(s)>2\myp.
\end{equation}
Clearly, the functions $f(x)$, $F(h)$ satisfy all the hypotheses of
Section \ref{sec3.3.1}, including the asymptotics \eqref{beta_h->0}
and~\eqref{beta_h->infty}, with any $\beta>2$. Hence, by Lemma
\ref{lm:Delta1} the function $\widehat{F}(s)$ is regular for
$1<\Re\myp(s)<2$, and the formula \eqref{eq:inverse} together with
\eqref{eq:F-tilde} yields
\begin{equation}\label{eq:inverse*}
\varDelta_f(h)=\frac{1}{2\pi \rmi}\int_{c-\rmi\infty}^{c+\rmi\infty}
\myn h^{-s}\myp \alpha_1^{-s-1}\tilde{F}(s)\,\dif{}s,\qquad 1<c<2.
\end{equation}
Thus, substituting the representation \eqref{eq:inverse*} (with
$h=km$) into \eqref{dif1} we get
\begin{equation}\label{dif2}
\EE_{z}(\xi_1)-n_1=\frac{1}{2\pi \rmi}\sum_{k,\myp m=1}^\infty
a_{k}\myp\mu(m)\mypp \int_{c-\rmi\infty}^{c+\rmi\infty} \myn
\frac{\tilde{F}(s)}{\alpha_1^{s+1}(km)^s}\,\dif{}s, \qquad 1<c<2.
\end{equation}

\subsubsection*{Step\/ \textup{3}.} Aiming to mollify the
singularity of the integrand in \eqref{eq:M_alpha} at zero, set
\begin{equation}\label{eq:phi}
\phi(y):=\frac{y\mypp \rme^{-y}}{(1-\rme^{-y})^2}
\left(\frac{1}{1-\rme^{-y/\tau}}-\frac{\tau}{y}-\frac12\right)\mynn,\qquad
y>0,
\end{equation}
and consider the regularized integral
\begin{equation}\label{J}
\mathcal{I}(s):=\int_0^\infty\mynn y^{s-1}\myp\phi(y)\,\dif{y},
\end{equation}
so that \eqref{eq:M_alpha} is rewritten in the form
\begin{align}\label{tildeM1}
\tilde{F}(s)&=
\mathcal{I}(s)+\tau\mynn\int_0^\infty\mynn\frac{y^{s-1}\rme^{-y}}
{(1-\rme^{-y})^2}\,\dif{}y+\frac{1}{2}\int_0^\infty\mynn
\frac{y^s\myp \rme^{-y}}{(1-\rme^{-y})^2}\,\dif{}y.
\end{align}
The integrals in \eqref{tildeM1} are easily evaluated: if
$\Re\myp(s)>2$ then
\begin{align}
\notag
\int_0^\infty\mynn\frac{y^{s-1}\myp\rme^{-y}}{(1-\rme^{-y})^2}\,\dif{}y
&=\int_0^\infty\mynn y^{s-1}\sum_{k=1}^\infty k\mypp
\rme^{-ky}\,\dif{}y
=\sum_{k=1}^\infty k\int_0^\infty\mynn y^{s-1}\myp\rme^{-ky}\,\dif{}y\\
\label{in_1} &=\sum_{k=1}^\infty \frac{1}{k^{s-1}}
\myp\int_0^\infty\mynn u^{s-1}\mypp\rme^{-u}\,\dif{}u
=\zeta(s-1)\mypp\Gamma(s),
\end{align}
and likewise
\begin{equation}\label{in_2}
\int_0^\infty\mynn \frac{y^{s}\myp
\rme^{-y}}{(1-\rme^{-y})^2}\,\dif{}y= \zeta(s)\mypp\Gamma(s+1).
\end{equation}
Thus, substituting \eqref{in_1} and \eqref{in_2} into
\eqref{tildeM1} we get
\begin{equation}\label{tildeM2}
\tilde{F}(s)= \mathcal{I}(s)+
\tau\myp\zeta(s-1)\mypp\Gamma(s)+\tfrac{1}{2}\myp\zeta(s)\mypp\Gamma(s+1),
\qquad \Re\myp(s)>2.
\end{equation}

\subsubsection*{Step\/ \textup{4}.} The representation \eqref{tildeM2} renders an
explicit analytic continuation of $\widehat{F}(s)$ into the strip
$0<\Re\myp(s)<2$ (cf.\ Lemma~\ref{lm:Delta1}). To show this, let us
first investigate the integral~\eqref{J}.

\begin{lemma}\label{lm:phi}
The function $\phi(y)$ defined in \eqref{eq:phi} has the following
asymptotic expansions
\begin{alignat}{2}
\label{eq:phi@0}
\phi(y)&=\tfrac{1}{12}\mypp \tau^{-1}\mynn+O(y^2),\qquad &&y\to0,\\[.2pc]
\label{eq:phi@infty} \phi(y)&=\tfrac{1}{2}\myp y\mypp
\rme^{-y}\,(1+o(1)),\qquad &&y\to\infty,
\end{alignat}
which can be formally differentiated
to produce the corresponding expansions of\/ $\phi^{\myp\prime}(y)$,
$\phi^{\myp\prime\prime}(y)$.
\end{lemma}

\begin{proof}By Taylor's expansion it is easy to check that,
as $y\to0$,
\begin{align*}
\frac{1}{1-\rme^{-y/\tau}}-\frac{\tau}{y}-\frac12&=\frac{y}{12\tau}\left(1+O(y^2)\right)\mynn,\\
\frac{y\mypp
\rme^{-y}}{(1-\rme^{-y})^2}&=\frac{1}{y}\left(1+O(y^2)\right)\mynn,
\end{align*}
and \eqref{eq:phi@0} follows on substituting this
into~\eqref{eq:phi}. Since differentiation of Taylor expansions is
legitimate, from \eqref{eq:phi@0} we also get
$\phi^{\myp\prime}(y)=O(y)$ and $\phi^{\myp\prime\prime}(y)=O(1)$,
as $y\to0$.

The asymptotics \eqref{eq:phi@infty} follow immediately from
\eqref{eq:phi}, and it is also straightforward to see that the main
asymptotic contribution to the derivatives of $\phi(y)$, as
$y\to\infty$, is furnished by the term $y\mypp\rme^{-y}$, so that
$\phi^{\myp\prime}(y)\sim \frac12\myp y\mypp\rme^{-y}$ and
$\phi^{\myp\prime\prime}(y)\sim \frac12\myp y\mypp\rme^{-y}$ as
$y\to\infty$.
\end{proof}

In view of \eqref{eq:phi@0} and \eqref{eq:phi@infty}, the integral
\eqref{J} is absolutely convergent if $\Re\myp(s)>0$, and therefore
the function $\mathcal{I}(s)$ is regular in the corresponding
half-plane.

Returning to the representation \eqref{tildeM2}, note that the gamma
function $\Gamma(s)$ is analytic for $\Re\myp(s) > 0$ (see, e.g.,
\cite[\S\myp{}4.41, p.\;148]{Titch2}), whereas the Riemann zeta
function $\zeta(s)$ is meromorphic in the complex plane $\CC$ with a
single (simple) pole at point $s = 1$ (see, e.g.,
\cite[\S\myp{}4.43, p.\;152]{Titch2}). Thus, the right-hand side of
\eqref{tildeM2} is meromorphic in the half-plane $\Re\myp(s)>0$,
with the simple poles at $s=1$ and $s=2$\myp.

\subsubsection*{Step\/ \textup{5}.}
Setting $s=\sigma+\rmi t$, let us estimate the function
$\tilde{F}(s)$ as $t\to\infty$. First of all, integrating by parts
(twice) in \eqref{J} and using the asymptotic formulas
\eqref{eq:phi@0}, \eqref{eq:phi@infty} for the function $\phi(y)$
and its derivatives, we obtain
\begin{equation}\label{eq:I(s)<}
\mathcal{I}(s)=\frac{1}{s\myp(s+1)}\int_0^\infty\mynn
y^{s+1}\myp\phi^{\myp\prime\prime}(y)\,\dif{y}=O(t^{-2}),\qquad
t\to\infty,
\end{equation}
uniformly in $0<c_1\le \sigma\le c_2<\infty$.
The gamma function in such a strip satisfies the uniform estimate
(see \cite[\S\myp{}4.42, p.\;151]{Titch2})
\begin{equation}\label{GFE}
\Gamma(s)=O\bigl(|t|^{\sigma-1/2}\mypp \rme^{-\pi|t|/2}\bigr),\qquad
t\to\infty.
\end{equation}
We also have the following uniform bounds on the growth of the
Riemann zeta function as $t\to\infty$
(see \cite[Theorem 1.9, p.~25]{Iv}),
\begin{equation}\label{zeta_123}
\zeta(s)=\left\{
\begin{aligned}
{}\myp&O(\ln\myn|t|),&&\ \ 1\le\sigma\le2,\\
&O\bigl(|t|^{(1-\sigma)/2} \ln\myn |t|\bigr),&&\ \ 0\le\sigma\le1.
\end{aligned}
\right.
\end{equation}
Therefore, substituting the estimates \eqref{eq:I(s)<}, \eqref{GFE}
and \eqref{zeta_123} into \eqref{tildeM2} and comparing the
resulting contributions, it is easy to check that for $1\le c\le 2$,
uniformly in $n\in\ZZ_+^2$\myp,
\begin{equation}\label{eq:F@infty}
\tilde{F}(c+\rmi t)=
O(t^{-2}),\qquad t\to\infty.
\end{equation}

\subsubsection*{Step\/ \textup{6}.}
Interchanging the order of summation and integration in \eqref{dif2}
gives
\begin{align}
\notag \EE_{z}(\xi_1)-n_1&=\frac{1}{2\pi
\rmi}\int_{c-\rmi\infty}^{c+\rmi\infty}
\frac{\tilde{F}(s)}{\alpha_1^{s+1}} \sum_{k=1}^\infty
\frac{a_{k}}{k^s} \sum_{m=1}^\infty
\frac{\mu(m)}{m^s}\,\dif{}s\\
\label{dif3} &=\frac{1}{2\pi \rmi}\int_{c-\rmi\infty}^{c+\rmi\infty}
\frac{\tilde{F}(s)\myp A(s)}{\alpha_1^{s+1}\zeta(s)}\,\dif{}s,
\end{align}
where we used the notation \eqref{eq:A2-} and the formula
\eqref{eq:zeta^{-1}}. This computation is justified by virtue of the
absolute convergence, since for $1<c<2$
\begin{equation}\label{eq:abs_conv}
\int_{c-\rmi\infty}^{c+\rmi\infty}
\frac{|\tilde{F}(s)|}{\alpha_1^{c+1}}\sum_{k=1}^\infty
\frac{|a_{k}|}{k^c} \sum_{m=1}^\infty \frac{1}{m^c}\,\dif{}|s|=
\frac{A^+(c)\mypp\zeta(c)}{\alpha_1^{c+1}}\int_{-\infty}^\infty
|\tilde{F}(c+\rmi t)|\,\dif{}t<\infty,
\end{equation}
where $A^+(c)\le A^+(1)<\infty$ due to the hypothesis of
Theorem~\ref{th:4.1}, whereas the last integral in
\eqref{eq:abs_conv} is finite thanks to the
bound~\eqref{eq:F@infty}.

Thus, on substituting \eqref{tildeM2} into \eqref{dif3} we have
\begin{equation}\label{dif4}
\EE_{z}(\xi_1)-n_1=\frac{1}{2\pi
\rmi}\int_{c-\rmi\infty}^{c+\rmi\infty}\mynn
A(s)\mypp\alpha_1^{-s-1}\mypp\varPsi(s)\,\dif{}s\qquad (1<c<2),
\end{equation}
where
\begin{equation}\label{Psi}
\varPsi(s):=\frac{\tilde{F}(s)}{\zeta(s)}=\frac{\mathcal{I}(s)+
\tau\myp\zeta(s-1)\myp\Gamma(s)}{\zeta(s)}+\frac{1}{2}\mypp\Gamma(s+1).
\end{equation}

\vspace{0.1pc}
\subsubsection*{Step\/ \textup{7}.} Since $\zeta(s)\ne 0$
for $\Re\myp(s) \ge 1$, the function $\varPsi(s)$ defined by the
expression \eqref{Psi} is analytic in the half-plane $\Re\myp(s)>1$;
moreover, it can be extended by continuity to the line
$\Re\myp(s)=1$, where the singularity at $s=1$ (due to the pole of
$\zeta(s)$ in the denominator) can be removed by setting
$\varPsi(1):=\lim_{s\to1}\varPsi(s)=\frac12\myp\Gamma(2)=\frac12$.

Let us show that the integration contour $\Re\myp(s)=c>1$ in
\eqref{dif4} can be moved to $\Re\myp(s)=1$. By the Cauchy theorem,
it suffices to check that
\begin{equation}\label{iT->0}
\lim_{T\to\pm\infty}\int_{1+\rmi\myp T}^{c+\rmi\myp T}\mynn
A(s)\mypp\alpha_1^{-s-1}\mypp\varPsi(s)\,\dif{}s=0.
\end{equation}
To this end, note that for $1\le \sigma\le c$
\begin{equation*}
|A(\sigma +\rmi\myp T)|\le A^+(\sigma)\le A^+(1)<\infty
\end{equation*}
(see \eqref{eq:A2-}) and
$\alpha_1^{-\sigma-1}\le \alpha_1^{-c-1}$ (since $\alpha_1\to0$, we
may assume that $\alpha_1<1$). From \eqref{eq:F@infty} we also have
$|\tilde{F}(\sigma+\rmi\myp T)|=O(T^{-2})$ as $T\to\infty$;
%
furthermore, it is known (see \cite[Eq.~(3.11.8), p.~60]{Titch1})
that
$\zeta(\sigma+\rmi\myp T)^{-1}=O(\ln\myn|T|)$ as $T\to\infty$,
uniformly in $\sigma\ge1$.
Hence, substituting these estimates into \eqref{Psi} we obtain,
uniformly in $1\le \sigma\le c$ and $n\in\ZZ_+^2$\myp,
\begin{equation}\label{eq:Psi@infty}
\varPsi(\sigma+\rmi\myp T)=\frac{\tilde{F}(\sigma+\rmi\myp
T)}{\zeta(\sigma+\rmi\myp T)}=O\bigl(T^{-2}\ln\myn |T|\bigr)\to
0,\qquad T\to\infty.
\end{equation}
As a result, the limit \eqref{iT->0} follows. Thus, the
representation \eqref{dif4} takes the form
\begin{align}
\label{dif5} \EE_{z}(\xi_1)-n_1&=\frac{1}{2\pi
\rmi}\int_{1-\rmi\infty}^{1+\rmi\infty}\mynn
A(s)\mypp\alpha_1^{-s-1}\mypp\varPsi(s)\,\dif{}s.
\end{align}

\vspace{.1pc}
\subsubsection*{Step\/ \textup{8}.} Finally, the
formula \eqref{dif5} yields the bound
\begin{align}
\label{eq:|Psi|}
|\EE_{z}(\xi_1)-n_1|&=\frac{A^+(1)}{2\pi\alpha_1^{2}}\int_{-\infty}^\infty
|\varPsi(1+\rmi\myp t)|\,\dif{}t =O(|n|^{2/3}),
\end{align}
since $\alpha_1\asymp |n|^{-1/3}$ according to \eqref{alpha} and the
integral in \eqref{eq:|Psi|} is finite thanks to the bound
\eqref{eq:Psi@infty}. This completes the proof of
Theorem~\ref{th:4.1}.


\section{Asymptotics of higher-order moments}\label{sec4}
Throughout this section, we again assume that $A^+(2)<\infty$,
except in Section \ref{sec4.4} where a stronger condition $A^+(1)$
is required.

\subsection{The variance--covariance of \myp$\xi$}\label{sec4.1}

Denote $\mu_z:=\EE_{z}(\xi)$ and let $K_z:=\Cov_z(\xi,\xi)=
\EE_{z}(\xi-\mu_z)^{\myn\topp}(\xi-\mu_z)$ be the covariance matrix
of the random vector \,$\xi=\sum_{x\in\calX} x\myp\nu(x)$. Recalling
that the random variables $\nu(x)$ are independent for different
$x\in \calX$ and using \eqref{eq:cumulants} with $q=2$, the elements
$K_z(i,j)=\Cov_z(\xi_i,\xi_j)$ of the matrix $K_z$ are given by
\begin{equation}\label{D_zxi}
K_z(i,j)=\sum_{x\in \calX}x_i\myp x_j \myp\Var_z[\nu(x)]=\sum_{x\in
\calX} x_i\myp x_j \sum_{k=1}^\infty k^2 a_{k}\mypp z^{kx},\qquad
i,j\in\{1,2\}.
\end{equation}

\subsubsection{Asymptotics of\/ the covariance matrix.}\label{sec4.1.1}

\begin{theorem}\label{th:K}
As\/ $n\to\infty$,
\begin{equation}\label{eq:Sigma}
K_z(i,j)\sim  B_{ij}\,(n_1n_2)^{2/3},\qquad i,j\in\{1,2\},
\end{equation}
where the matrix $B=(B_{ij})$ is given by
\begin{equation}\label{eq:Sigma1}
\arraycolsep=.18pc B=\kappa^{-1}\mynn
\begin{pmatrix}2\myp \tau^{-1}&1\\
1&2\myp \tau\myn
\end{pmatrix}.
\end{equation}
\end{theorem}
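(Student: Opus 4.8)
The plan is to mimic the proof of Theorem \ref{th:delta12}: use the M\"obius-inversion Lemma \ref{lm:Mobius} to pass from sums over $\calX$ to sums over $\ZZ^2_+$, evaluate the resulting sums in closed form, and then pass to the limit by dominated convergence, using the scaling relations (\ref{eq:alpha2:1}) to extract the correct power of $|n|$.

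First I would fix $i,j\in\{1,2\}$ and set $f(x):=x_ix_j\,\rme^{-\langle\alpha,x\rangle}$ for $x\in\RR^2_+$. Then, in the notation (\ref{f+lemma}), $F^\sharp(k)=\sum_{x\in\calX}f(kx)=k^2\sum_{x\in\calX}x_ix_j\,\rme^{-k\langle\alpha,x\rangle}$, so (\ref{D_zxi}) becomes $K_z(i,j)=\sum_{k=1}^\infty a_k\,F^\sharp(k)$. Condition (\ref{eq:sum_sum<}) is verified directly (each coordinate sum is geometric), so Lemma \ref{lm:Mobius} gives $F^\sharp(k)=\sum_{m=1}^\infty\mu(m)\,F(km)$ with $F(h)=\sum_{x\in\ZZ^2_+}f(hx)$, and hence
\[
K_z(i,j)=\sum_{k,\,m=1}^\infty a_k\,\mu(m)\,F(km).
\]
Summing the geometric-type series coordinatewise yields the closed forms
\[
F(h)=h^2\,\frac{\rme^{-h\alpha_1}(1+\rme^{-h\alpha_1})}{(1-\rme^{-h\alpha_1})^3(1-\rme^{-h\alpha_2})}\quad(i=j=1),\qquad
F(h)=h^2\,\frac{\rme^{-h\alpha_1}\rme^{-h\alpha_2}}{(1-\rme^{-h\alpha_1})^2(1-\rme^{-h\alpha_2})^2}\quad(i\neq j),
\]
the case $i=j=2$ being obtained from $i=j=1$ by interchanging the indices $1\leftrightarrow 2$.

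Next I would bound the general term uniformly in $k,m\ge1$. Splitting $\rme^{-h\alpha_1}=\rme^{-h\alpha_1/2}\rme^{-h\alpha_1/2}$, using $\rme^{-h\alpha_1/2}=\rme^{-(n_2/n_1)h\alpha_2/2}\le\rme^{-c_1 h\alpha_2/2}$ (from (\ref{eq:alpha2:1}) and Assumption \ref{as:c}) and applying the elementary bound (\ref{eq:exp_bound}), one obtains
\[
F(km)=O\bigl((km)^{-2}\alpha_1^{-3}\alpha_2^{-1}\bigr)\quad(i=j=1),\qquad
F(km)=O\bigl((km)^{-2}\alpha_1^{-2}\alpha_2^{-2}\bigr)\quad(i\neq j),
\]
uniformly in $k,m$. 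By (\ref{eq:alpha2:1}) and Assumption \ref{as:c} one has $\alpha_1^3\alpha_2\asymp\alpha_1^2\alpha_2^2\asymp(n_1n_2)^{-2/3}$, so after dividing $K_z(i,j)$ by $(n_1n_2)^{2/3}$ the general term is $O\bigl(|a_k|\,k^{-2}m^{-2}\bigr)$, a term of a convergent series because $A^+(2)<\infty$. Since $h\alpha_1,h\alpha_2\to0$ for every fixed $h=km$, we have $\alpha_1^3\alpha_2\,F(km)\to2\,(km)^{-2}$ when $i=j=1$ and $\alpha_1^2\alpha_2^2\,F(km)\to(km)^{-2}$ when $i\neq j$; hence Lebesgue's dominated convergence theorem applies. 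Evaluating the limit with $\sum_k a_k k^{-2}=A(2)$, $\sum_m\mu(m)m^{-2}=\zeta(2)^{-1}$ (see (\ref{eq:zeta^{-1}})) and $\kappa^3=A(2)/\zeta(2)$ (see (\ref{eq:kappa})), and re-expressing $\alpha_1,\alpha_2$ through (\ref{alpha}), (\ref{delta12}), one finds $K_z(i,j)\sim B_{ij}\,(n_1n_2)^{2/3}$ with $B_{ij}$ as in (\ref{eq:Sigma1}), which is (\ref{eq:Sigma}).

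The main obstacle is the dominated-convergence step: one must produce bounds on $F(km)$ that are simultaneously summable in $(k,m)$ and carry \emph{exactly} the power $(n_1n_2)^{2/3}$ of $|n|$, so that the dominating sequence is independent of $n$ after normalization. As in (\ref{eq:2,1}), this is where the bound (\ref{eq:exp_bound}), Assumption \ref{as:c} (which allows trading $\alpha_1$-decay for $\alpha_2$-decay) and the hypothesis $A^+(2)<\infty$ all enter.
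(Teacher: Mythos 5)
Your proof is correct and follows essentially the same route as the paper: start from (\ref{D_zxi}), apply M\"obius inversion (Lemma \ref{lm:Mobius}) to pass to a double sum over $(k,m)$, sum the geometric-type series in closed form, establish a uniform bound via (\ref{eq:exp_bound}) after splitting off part of the $\alpha_1$-exponential, and conclude by dominated convergence using $A^+(2)<\infty$, (\ref{eq:kappa}), (\ref{eq:zeta^{-1}}) and (\ref{eq:alpha2:1}). The paper only works out $K_z(1,1)$ explicitly and declares the remaining entries ``similar''; you usefully make the off-diagonal case explicit, but the argument is the same.
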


\begin{proof}
The calculations below follow the lines of the proof of
Theorem~\ref{th:delta12}, so we only sketch the proof. Let us first
consider the element $K_z(1,1)$. Substituting the parameterization
$z=\rme^{-\alpha}$ (see~\eqref{alpha}) into \eqref{D_zxi}, we obtain
(cf.~\eqref{E_i'})
\begin{equation}\label{D_zxi+}
K_z(1,1)=\sum_{x\in \calX} x_1^2\sum_{k=1}^\infty k^2
a_{k}\mypp\rme^{-k\langle\alpha,\myp x\rangle}.
\end{equation}
Using the M\"obius inversion formula \eqref{eq:Mobius}, similarly to
\eqref{E_1} the right-hand side of \eqref{D_zxi+} can be rewritten
in the form
\begin{align}
\notag K_z(1,1)&=\sum_{m=1}^\infty m^2\mu(m) \sum_{k=1}^\infty k^2
a_{k}\sum_{x\in\ZZ^2_+}
x_1^2\myp\rme^{-km\langle\alpha,\myp x\rangle}\\
\notag &=\sum_{k,\myp m=1}^\infty m^2\mu(m)\mypp k^2 a_{k}
\sum_{x_1=1}^\infty x_1^2\myp\rme^{-km\alpha_1 x_1}
\sum_{x_2=0}^{\infty}
\rme^{-km\alpha_2 x_2}\\
\label{D1_1(t)} &=\sum_{k,\myp m=1}^\infty m^2\mu(m)\mypp k^2
a_{k}\, \frac{\,\rme^{-km\alpha_1} (1+\rme^{-km\alpha_1\myn})}
{(1-\rme^{-km\alpha_1\myn})^3(1-\rme^{-km\alpha_2})}\myp,
\end{align}
where we used the expressions \eqref{eq:Sq-exact} for $S_1(\cdot)$
and $S_3(\cdot)$. Similarly to the estimate \eqref{eq:2,1}, by
virtue of  \eqref{eq:exp_bound} the general term in the sum
\eqref{D1_1(t)} is bounded by $O(\alpha_1^{-3}\alpha_2^{-1})\myp
|a_{k}|\myp k^{-2}\myp m^{-2}$, uniformly in $k,m$, and furthermore,
\begin{equation*}
\sum_{k,\myp m=1}^\infty \frac{|a_{k}|}{k^2\myp m^2}=
A^+(2)\mypp\zeta(2) <\infty.
\end{equation*}
Therefore, by the dominated convergence argument, from
\eqref{D1_1(t)} we obtain, similarly to~\eqref{eq:2,1},
\begin{equation}\label{eq:->A4}
\lim_{n\to\infty}\alpha_1^3\alpha_2\myp K_z(1,1) =2\sum_{k=1}^\infty
\frac{a_{k}}{k^2}\sum_{m=1}^\infty \frac{\mu(m)}
{m^2}=\frac{2A(2)}{\zeta(2)}=2\kappa^3.
\end{equation}
To reduce this limit to \eqref{eq:Sigma}, observe using the scaling
relations \eqref{eq:alpha2:1} that
\begin{equation}\label{eq:alpha*alpha}
\alpha_1^{3}\alpha_2=\frac{\alpha_1}{\alpha_2}\mypp\alpha_1^{2}\alpha_2^{2}=\tau\kappa^{4}(n_1n_2)^{-2/3},
\end{equation}
and from \eqref{eq:->A4} we get
\begin{equation}\label{eq:->B11}
\lim_{n\to\infty}(n_1n_2)^{-2/3}\myp
K_z(1,1)=\tau^{-1}\kappa^{-4}\lim_{n\to\infty}\alpha_1^3\myp\alpha_2\myp
K_z(1,1)=2\myp\tau^{-1}\kappa^{-1}=B_{11},
\end{equation}
as required (cf.\ \eqref{eq:Sigma},~\eqref{eq:Sigma1}).

The element $K_z(2,2)$ is analyzed in a similar fashion. Finally,
for $K_z(1,2)$ we obtain, similarly as in \eqref{D1_1(t)}
and~\eqref{eq:->A4},
\begin{align*}
K_z(1,2)&=\sum_{x\in \calX} x_1\myp x_2\sum_{k=1}^\infty k^2
a_{k}\mypp\rme^{-k\langle\alpha,\myp x\rangle}\\
&=\sum_{k,\myp m=1}^\infty k^2 a_{k}\, m^2\myn\mu(m)
\sum_{x_1=1}^\infty x_1\myp\rme^{-km\alpha_1 x_1}
\mynn\sum_{x_2=1}^{\infty}
x_2\mypp \rme^{-km\alpha_2 x_2}\\
\label{D1_1(t)} &=\sum_{k,\myp m=1}^\infty k^2 a_{k}\,
m^2\myn\mu(m)\, \frac{\rme^{-km\alpha_1}
\mypp\rme^{-km\alpha_2\myn}}{(1-\rme^{-km\alpha_1\myn})^2
\myp(1-\rme^{-km\alpha_2})^2}\\
&\sim \alpha_1^{-2}\alpha_2^{-2}\sum_{k=1}^\infty\frac{a_k}{k^2}
\sum_{m=1}^\infty\frac{\mu(m)}{m^2}= \alpha_1^{-2}\alpha_2^{-2}
\kappa^3\qquad (n\to\infty).
\end{align*}
Hence, using the identity
$\alpha_1^{2}\alpha_2^2=\kappa^{4}(n_1n_2)^{-2/3}$
(cf.~\eqref{eq:alpha*alpha}), it follows as in \eqref{eq:->B11} that
\begin{equation*}
\lim_{n\to\infty}(n_1n_2)^{-2/3}\myp
K_z(1,2)=\kappa^{-4}\lim_{n\to\infty}\alpha_1^2\myp\alpha_2^2\myp
K_z(1,2)=\kappa^{-1}\myn=B_{12},
\end{equation*}
according to the notation \eqref{eq:Sigma1}. Thus, the proof of the
theorem is complete.
\end{proof}

\subsubsection{The norm of\/ the covariance matrix.}\label{sec4.1.2}

The next lemma is an immediate corollary of Theorem~\ref{th:K}.
\begin{lemma}\label{lm:detK} As\/
$n\to\infty$,
\begin{equation}
\label{eq:detK} \det K_z\sim 3\myp \kappa^{-2}(n_1n_2)^{4/3}\asymp
|n|^{8/3}.
\end{equation}
\end{lemma}

This result
implies that the matrix $K_z$ is
non-degenerate, at least asymptotically as $n\to\infty$. In fact,
from \eqref{D_zxi} it is easy to deduce (e.g., using the
Cauchy--Schwarz inequality together with the characterization of the
equality case) that $K_z$ is positive definite; in particular, $\det
K_z>0$ and hence $K_z$ is invertible. Let $V_z:=K_z^{-1/2}$ be the
(unique) square root of the matrix $K_z^{-1}$, that is, a symmetric,
positive definite matrix such that $V_z^2=K_z^{-1}$.

We need some general facts about the matrix norm $\|{\cdot}\|$,
which we state as a lemma (see \cite[\S\myp7.2, p.~2301]{BZ4} for
simple proofs and bibliographic comments).

\begin{lemma}\label{lm:7.1.1}
\textup{(a)} \,If\/ $A$ is a real matrix then $\|A^{\topp}\mynn
A\|=\|A\|^2$.


\smallskip
\textup{(b)} \,If\/ $A=(a_{ij})$ is a\/ real $d\times d$
matrix\myp, then\/
\begin{equation*}
\frac{1}{d}\sum_{i,\myp{}j=1}^d a_{ij}^2\le
\|A\|^2\le\sum_{i,\myp{}j=1}^d a_{ij}^2\myp.
\end{equation*}

\textup{(c)} \,Let\/ $A$ be a real symmetric\/ $2\times 2$ matrix
with\/ $\det A\ne 0$. Then
\begin{equation*}
\|A^{-1}\|=\frac{\|A\|}{|\mynn\det A|}\myp.
\end{equation*}
\end{lemma}

Let us now estimate the norm of the matrices $K_z$ and
$V_z=K_z^{-1/2}$.
\begin{lemma}\label{lm:K_z}
As\/ $n\to\infty$, one has\/
\begin{equation}\label{eq:KV}
\|K_z\|\asymp |n|^{4/3},\qquad \|V_z\|\asymp |n|^{-2/3}.
\end{equation}
\end{lemma}

\begin{proof}
Lemma \ref{lm:7.1.1}\myp(b) and Theorem \ref{th:K} imply
\begin{equation*}
\|K_z\|^2\asymp \sum_{i,\myp{}j=1}^2 K_z(i,j)^2\asymp
(n_1n_2)^{4/3}\asymp |n|^{8/3}\qquad (n\to\infty),
\end{equation*}
which proves the first estimate in~\eqref{eq:KV}. Furthermore, using
parts (a) and (c) of Lemma \ref{lm:7.1.1}, we have
\begin{equation*}
\|V_z\|^2=\|V_z^2\|=\|K_z^{-1}\|=\frac{\|K_z\|}{\det
K_z}\asymp\frac{|n|^{4/3}}{|n|^{8/3}}=|n|^{-4/3},
\end{equation*}
according to the known asymptotics of $\det K_z$ and $\|K_z\|$ (see
\eqref{eq:detK} and \eqref{eq:KV}, respectively). Hence, the second
estimate in \eqref{eq:KV} follows, and the proof of the lemma is
complete.
\end{proof}

\subsection{The cumulants of \myp$\xi_j$}\label{sec4.3}

By the parameterization $z=\rme^{-\alpha}$ (see~\eqref{alpha}), the
expansion \eqref{eq:cumulants-xi} takes the form
\begin{equation}\label{varkappa_zxi+}
\varkappa_q[\myp\xi_j] =\sum_{x\in \calX} x_j^q\sum_{k=1}^\infty k^q
a_{k}\mypp \rme^{-k\langle \alpha,\myp x\rangle},\qquad q\in\NN.
\end{equation}

\begin{lemma}\label{lm:kappa_q}
For each $q\in\NN$, as $n\to\infty$,
\begin{equation}\label{eq:kappa_q}
\varkappa_q[\myp\xi_j]\sim
\frac{q!\,\kappa^3}{\alpha_1^{q+1}\alpha_2} \asymp
|n|^{(q+2)/3},\qquad n\to\infty.
\end{equation}
\end{lemma}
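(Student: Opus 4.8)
The plan is to prove the two-sided bound in \eqref{eq:kappa_q} by the same M\"obius-inversion-plus-dominated-convergence strategy already used for the mean in Theorem~\ref{th:delta12} and for the covariances in Theorem~\ref{th:K}; indeed, the case $q=2$ is precisely Theorem~\ref{th:K} (up to the explicit constant), and the general argument is a routine extension. Fix $j$; consider $j=1$, the case $j=2$ being symmetric. Starting from \eqref{varkappa_zxi+} and applying Lemma~\ref{lm:Mobius} with $f(x)=x_1^q\,\rme^{-\langle\alpha,x\rangle}$ (whose absolute summability condition \eqref{eq:sum_sum<} is immediate), I would rewrite
\begin{equation*}
\varkappa_q[\xi_1]=\sum_{k,\myp m=1}^\infty m^q\mu(m)\,k^q a_k
\sum_{x_1=1}^\infty x_1^q\,\rme^{-km\alpha_1 x_1}\sum_{x_2=0}^\infty \rme^{-km\alpha_2 x_2}.
\end{equation*}
The inner $x_1$-sum is $S_{q+1}(km\alpha_1)$ in the notation of Lemma~\ref{lm:6.2}, and the $x_2$-sum is the geometric series $(1-\rme^{-km\alpha_2})^{-1}$.

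Next I would extract the scaling. Using Lemma~\ref{lm:C_k} together with the global bound \eqref{eq:exp_bound}, the inner double product is $O\bigl(\alpha_1^{-(q+1)}\alpha_2^{-1}(km)^{-(q+1)}\bigr)$ uniformly in $k,m\ge1$ (here I use Assumption~\ref{as:c} to absorb the ratio $n_2/n_1$ into constants, exactly as in the second estimate of \eqref{eq:2,1}). Hence the general term of the series is $O\bigl(\alpha_1^{-(q+1)}\alpha_2^{-1}\,|a_k|\,k^{-1}m^{-1}\bigr)$. Unfortunately $\sum_k |a_k|/k$ and $\sum_m 1/m$ both diverge, so a cruder bound is needed for summability; but the sharper bound $S_{q+1}(\theta)\le C_{q+1}\rme^{-\theta}(1-\rme^{-\theta})^{-(q+1)}$ combined with one extra factor of $\rme^{-km\alpha_1/2}$ (split off from the exponential as in \eqref{eq:2,1}) gives instead a term bounded by $O\bigl(\alpha_1^{-(q+1)}\alpha_2^{-1}\,|a_k|\,k^{-2}m^{-2}\bigr)\cdot(km)^{?}$ --- more carefully, keeping $(km)^{-(q+1)}k^q=k^{-1}m^{-(q+1)}$ and then using $S_{q+1}$ with only part of the exponent, one arrives at a dominating series $\sum_k |a_k|k^{-2}\sum_m m^{-2}=A^+(2)\zeta(2)<\infty$. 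So by Lebesgue's dominated convergence theorem,
\begin{equation*}
\alpha_1^{\,q+1}\alpha_2\,\varkappa_q[\xi_1]\ \longrightarrow\ (q-1)!\sum_{k=1}^\infty \frac{a_k}{k}\sum_{m=1}^\infty \frac{\mu(m)}{m}\cdot(\text{finite, nonzero constant}),
\end{equation*}
where the factor $(q-1)!=c_{q,q}=c_{q+1,q+1}/q$ is the leading coefficient from Lemma~\ref{lm:6.2} and the relevant Dirichlet series converge because $A^+(2)<\infty$ (note $A^+(2)<\infty$ forces $A^+(1)$-type sums to be handled via the $k^{-2}$ domination above, not directly). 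Finally, recalling \eqref{alpha} and \eqref{delta12}, $\alpha_1^{\,q+1}\alpha_2\asymp n_1^{-(q+1)/3}n_2^{-1/3}\asymp |n|^{-(q+2)/3}$, which yields $\varkappa_q[\xi_1]\asymp|n|^{(q+2)/3}$.

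The only point requiring care --- and hence the main obstacle --- is obtaining a single dominating series valid under the standing hypothesis $A^+(2)<\infty$ while simultaneously isolating the correct power of $\alpha_1$: one must split $\rme^{-km\alpha_1 x_1}=\rme^{-km\alpha_1 x_1/2}\cdot\rme^{-km\alpha_1 x_1/2}$, apply Lemma~\ref{lm:6.2}/Lemma~\ref{lm:C_k} to one half to produce the $(1-\rme^{-km\alpha_1})^{-(q+1)}$ factor, and use the leftover $\rme^{-km\alpha_1/2}$ together with \eqref{eq:exp_bound} to manufacture enough negative powers of $km$ for absolute convergence against $\sum|a_k|k^{-2}$. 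Once the dominating series is in place, passing to the limit and reading off the $\asymp$ is mechanical; in particular one should check the limiting constant is strictly positive, which follows because $a_{k^*}>0$ for $k^*=\min\{k:a_k\ne0\}$ (Remark~\ref{rm:b=a}, together with $b_1>0$ in the cases of interest) makes $\sum_k a_k k^{-1}$ --- interpreted via the regular part --- nonzero, and $\sum_m \mu(m)m^{-1}=1/\zeta(1)$ is handled by the same Abel-summation device already invoked around \eqref{eq:zeta^{-1}}. I would phrase the write-up to cover $j=1$ in detail and remark that $j=2$ is identical after exchanging the roles of the coordinates.
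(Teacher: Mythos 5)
Your overall strategy — Möbius inversion, identify the inner $x_1$-sum as $S_{q+1}(km\alpha_1)$, bound the general term via Lemma~\ref{lm:C_k} and \eqref{eq:exp_bound} to get a dominating series, then pass to the limit and read off the scaling from $\alpha_1^{q+1}\alpha_2\asymp|n|^{-(q+2)/3}$ — is exactly the paper's proof. However, your limit computation contains an exponent-counting error that, if taken literally, kills the lower bound and hence the $\asymp$ claim.

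The correct accounting is: the Möbius prefactor contributes $m^q k^q$, the sum $S_{q+1}(km\alpha_1)$ contributes $c_{q+1,q+1}/(km\alpha_1)^{q+1}=q!/(km\alpha_1)^{q+1}$ in the limit, and the geometric $x_2$-sum contributes $1/(km\alpha_2)$. Thus, after multiplying by $\alpha_1^{q+1}\alpha_2$, each summand tends to $q!\,a_k\mu(m)/(k^2m^2)$, and
\begin{equation*}
\alpha_1^{q+1}\alpha_2\,\varkappa_q[\xi_1]\ \longrightarrow\ q!\sum_{k=1}^\infty\frac{a_k}{k^2}\sum_{m=1}^\infty\frac{\mu(m)}{m^2}=q!\,\frac{A(2)}{\zeta(2)}=q!\,\kappa^3>0,
\end{equation*}
which is positive for free by the calibration \eqref{eq:kappa}. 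You instead wrote $(q-1)!\sum_k a_k/k\sum_m\mu(m)/m$: the coefficient is $c_{q+1,q+1}=q!$ (the leading coefficient of $S_{q+1}$, not of $S_q$), and you dropped the factor $(km)^{-1}$ from the geometric sum, turning $k^{-2}m^{-2}$ into $k^{-1}m^{-1}$. This is not a harmless slip: $\sum_m\mu(m)/m$ converges to $1/\zeta(1)=0$ (this is equivalent to the prime number theorem), so your limit would be \emph{zero}, giving only $\varkappa_q[\xi_1]=o(|n|^{(q+2)/3})$ rather than $\asymp$. The closing ``one should check the limiting constant is strictly positive'' paragraph is therefore attempting to rescue a formula that is actually identically zero; the ``regular part of $\sum_k a_k/k$'' and ``Abel summation'' devices are not needed once the exponents are correct, because $A(2)/\zeta(2)=\kappa^3>0$ is already built into the choice of $z$.
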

\proof Let $j=1$ (the case $j=2$ is handled in a similar fashion).
Using the M\"obius inversion formula \eqref{eq:Mobius}, similarly to
\eqref{E_1} the right-hand side of \eqref{varkappa_zxi+} (with
$j=1$) can be rewritten as
\begin{align}
\notag \varkappa_q[\myp\xi_1]&=\sum_{k,\myp m=1}^\infty k^q
a_{k}\mypp m^q\mu(m)\sum_{x\in\ZZ^2_+}
x_1^q\myp\rme^{-km\langle\alpha,\myp x\rangle}\\
\notag &=\sum_{k,\myp m=1}^\infty k^q a_{k}\mypp m^q\mu(m)
\sum_{x_1=1}^\infty x_1^q\myp\rme^{-km\alpha_1 x_1}
\sum_{x_2=0}^{\infty}
\rme^{-km\alpha_2 x_2}\\
\label{D1_1(t)'} &=\sum_{k,\myp m=1}^\infty k^q a_{k}\mypp
m^q\mu(m)\mypp
S_{q+1}(km\alpha_1)\,\frac{1}{1-\rme^{-km\alpha_2}}\myp,
\end{align}
where we used the notation~\eqref{eq:q}. Applying Lemma
\ref{lm:C_k}\myp(a) and then the estimate \eqref{eq:exp_bound}
(cf.~\eqref{eq:2,1}), we obtain
\begin{align*}
\frac{\alpha_1^{q+1}\myn\alpha_2\mypp
S_{q+1}(km\alpha_1)}{1-\rme^{-km\alpha_2}}&\le
\frac{\alpha_1^{q+1}\myn\alpha_2\mypp
\bar{c}_{q+1}\mypp\rme^{-km\alpha_1}}{(1-\rme^{-km\alpha_1\myn})^{q+1}\myp(1-\rme^{-km\alpha_2\myn})}\\
&\le \frac{\bar{c}_{q+1}\mypp C_{q+1}(1/2)\mypp
C_1(\tau_*/2)}{(k\myp m)^{q+2}}\myp.
\end{align*}

Consequently, the general summand in the double series
\eqref{D1_1(t)'} multiplied by $\alpha_1^{q+1}\myn\alpha_2$ is
bounded, uniformly in $k,m\ge1$, by $O(1)\mypp |a_k|\myp
k^{-2}m^{-2}$, which is a term of a convergent series owing to the
condition $A^+(2)<\infty$. Hence, applying Lebesgue's dominated
convergence theorem and using Lemma \ref{lm:C_k}\myp(b), we obtain
\begin{align}\label{eq:q!kappa}
\lim_{n\to\infty}\alpha_1^{q+1}\myn\alpha_2\mypp\varkappa_q[\myp\xi_1]
&= q!\sum_{k,\myp m=1}^\infty \frac{\mu(m)\mypp a_{k}}{k^2
m^2}=q!\,\frac{A(2)}{\zeta(2)}=q!\,\kappa^3\myp.
\end{align}
Finally, according to \eqref{alpha} we have
$\alpha_1^{q+1}\alpha_2\asymp |n|^{-(q+2)/3}$, and hence
\eqref{eq:q!kappa} implies~\eqref{eq:kappa_q}.
\endproof

In Section \ref{sec5.4} we will require an asymptotic bound for the
\emph{sixth-order central moment} of $\xi_j$, which is established
next.

\begin{lemma}\label{lm:6.7}
Set\/ \myp$\xi^0_j:=\xi_j-\EE_{z}(\xi_j)$
\mypp\textup{(}$j=1,2$\textup{)}. Then
\begin{equation}\label{eq:E_xi_6}
\EE_{z}\myn\bigl[(\xi^0_j)^{6}\bigr] \asymp |n|^{4},\qquad
n\to\infty.
\end{equation}
\end{lemma}
\begin{proof}
Using the expression of the sixth central moment via the cumulants
(see \eqref{eq:kappa's} with $X=\xi_j$ and $q=6$), we have
\begin{equation}\label{eq:kappa_6}
\EE_{z}\myn\bigl[(\xi^0_j)^{6}\bigr]=\varkappa_6[\myp\xi_j\myp]+
15\myp\varkappa_4[\myp\xi_j\myp]\myp\varkappa_2[\myp\xi_j\myp] +
10\myp(\varkappa_3[\myp\xi_j\myp])^2+
15\myp(\varkappa_2[\myp\xi_j\myp])^3.
\end{equation}
Applying Lemma \ref{lm:kappa_q} to the cumulants involved in
\eqref{eq:kappa_6}, it is easy to check that the main asymptotic
term is given by $(\varkappa_2[\myp\xi_j\myp])^3\asymp |n|^4$, which
proves the relation~\eqref{eq:E_xi_6}.
\end{proof}


\subsection{The Lyapunov coefficient}\label{sec4.4}
Let us introduce the \emph{Lyapunov coefficient} (of the third
order)
\begin{equation}\label{L3}
L_z:=\|V_z\|^{3} \sum_{x\in \calX}|x|^3\mu_3[\nu(x)],
\end{equation}
where $\mu_3[\nu(x)]$ is the third-order absolute central moment of
$\nu(x)$,
\begin{equation}\label{eq:nu0}
\mu_3[\nu(x)] :=\EE_{z}\myn\bigl[|\nu^0(x)|^3\bigr],\qquad
\nu^0(x):=\nu(x)-\EE_z[\nu(x)].
\end{equation}

The next asymptotic estimate will play an important role in the
proof of the local limit theorem in Section \ref{sec5.3} below.

\begin{lemma}\label{lm:7.1}
Suppose that $A^+(1)<\infty$. Then
\begin{equation}\label{eq:L}
L_z\asymp |n|^{-1/3},\qquad n\to\infty.
\end{equation}
\end{lemma}

\begin{proof}
In view of the definition \eqref{L3} and the asymptotics
$\|V_z\|\asymp |n|^{-2/3}$ (see~\eqref{eq:KV}), for the proof of
\eqref{eq:L} it suffices to show that
\begin{equation}\label{eq:sum(mu3)}
M_3:=\sum_{x\in\calX} |x|^{3} \mu_3[\nu(x)]\asymp |n|^{5/3},\qquad
n\to\infty.
\end{equation}
Starting with a \emph{lower bound} for $M_3$, observe using the
relation \eqref{eq:kappa's} with $q=3$ that
\begin{equation}\label{eq:mu3lower}
\mu_3[\nu(x)] \ge \EE_z[\nu^0(x)^3]=\varkappa_3[\nu(x)].
\end{equation}
Hence, using the formula \eqref{eq:cumulants-xi} and
Lemma~\ref{lm:kappa_q} (with $q=3$), from \eqref{eq:sum(mu3)} we get
\begin{equation}\label{eq:M-lower}
M_3\ge \sum_{x\in\calX} |x|^{3} \varkappa_3[\nu(x)]\ge
\sum_{x\in\calX} x_1^{3}
\myp\varkappa_3[\nu(x)]=\varkappa_3[\myp\xi_1]\asymp
|n|^{5/3},\qquad n\to\infty,
\end{equation}
which is in agreement with the claim~\eqref{eq:sum(mu3)}.

Let us now obtain a suitable \emph{upper bound} on $M_3$. First,
using the elementary inequality
$$|x|^3=(x_1^2+x_2^2)^{3/2}\le
\sqrt{2}\,(x_1^3+x_2^3)
$$
(which follows from H\"older's inequality for the function
$y=x^{3/2}$), we have
\begin{equation}\label{eq:M3<}
M_3\le \sqrt{2}\myp\sum_{x\in\calX} (x_1^{3}+x_2^3)\myp
\mu_3[\nu(x)].
\end{equation}
To estimate the moment $\mu_3[\nu(x)]$ (see \eqref{eq:nu0}), observe
that for any $u,v\ge0$
\begin{equation}\label{eq:|u-v|}
|u-v|^3=(u-v)^2|u-v|\le (u-v)^2(u+v)=(u-v)^3 +2\myp v\myp (u-v)^2.
\end{equation}
Setting in \eqref{eq:|u-v|} $u=\nu(x)$, \,$v=\EE_z[\nu(x)]$ and
taking the expectation, we get the inequality
\begin{align*}
\mu_3[\nu(x)]&\le \EE_z[\nu^0(x)^3] +2\myp \EE_z[\nu(x)]\cdot
\EE_z[\nu^0(x)^2]\\[.1pc]
&=\varkappa_3[\nu(x)]+2\myp\varkappa_1[\nu(x)]\myp\varkappa_2[\nu(x)],
\end{align*}
according to the identities \eqref{eq:m1}, \eqref{eq:kappa's}
applied to $\nu(x)$. Note that the term $\varkappa_3[\nu(x)]$ here
is the same as in \eqref{eq:mu3lower}, so upon the substitution into
\eqref{eq:M3<} it gives the contribution of the order of $|n|^{5/3}$
into the upper bound for $M_3$.

Next, using the expansion \eqref{eq:cumulants} with $q=1$ and $q=2$,
we obtain
\begin{align}
\notag \sum_{x\in\calX}
x_1^3\mypp\varkappa_1[\nu(x)]\myp\varkappa_2[\nu(x)]&=
\sum_{x\in\calX} x_1^3\sum_{k=1}^\infty k
a_k\mypp\rme^{-k\langle\alpha,\myp x\rangle}\sum_{\ell=1}^\infty
\ell^2 a_\ell\,\rme^{-\ell\myp\langle\alpha,\myp x\rangle}\\
\label{eq:x1^3}
&\le \sum_{k,\myp \ell\myp\ge1} k|a_k|\mypp
\ell^2|a_\ell|\sum_{x\in\ZZ^2_+} x_1^3
\,\rme^{-(k+\ell)\myp\langle\alpha,\myp x\rangle}.
\end{align}
Using the notation \eqref{eq:q} and the bounds of Lemmas
\ref{lm:C_k} and~\ref{lm:exp<C}, the internal sum in \eqref{eq:x1^3}
can be estimated, uniformly in $k,\ell\ge1$, as follows (cf.\
\eqref{eq:2,1})
\begin{align}
\notag
\sum_{x\in\ZZ^2_+} x_1^3\mypp
\rme^{-(k+\ell)\myp\langle\alpha,\myp x\rangle}&=\sum_{x_1=1}^\infty
x_1^3\mypp \rme^{-(k+\ell)\myp\alpha_1 x_1}
\sum_{x_2=0}^\infty\rme^{-(k+\ell)\alpha_2 x_2}\\
\notag &=S_4((k+\ell)\myp\alpha_1)\cdot
\frac{1}{1-\rme^{-(k+\ell)\myp\alpha_2}}\\
\notag &\le \frac{\bar{c}_4\mypp\rme^{-(k+\ell)\myp\alpha_1} }{(1-\rme^{-(k+\ell)\myp\alpha_1})^4
\mypp(1-\rme^{-(k+\ell)\myp\alpha_2})}\\
\label{eq:(k+l)^5}
&=\frac{O(1)}{(k+\ell)^5\myp\alpha_1^4\alpha_2}=\frac{O(|n|^{5/3})}{(k+\ell)^5},
\end{align}
in view of the asymptotics $\alpha_1\asymp \alpha_2\asymp
|n|^{-1/3}$ (see~\eqref{alpha}). The analogous sum with $x_2^3$ in
place of $x_1^3$ in \eqref{eq:x1^3} is estimated similarly, so
combining \eqref{eq:M3<} and \eqref{eq:(k+l)^5} we get
\begin{equation}\label{eq:M3<)k+l)5}
M_3=O(|n|^{5/3}) \sum_{k,\myp \ell\myp\ge1} \frac{k|a_k|\mypp
\ell^2|a_\ell|}{(k+\ell)^5}\myp.
\end{equation}
Furthermore, by the elementary inequality
\begin{equation*}
(k+\ell)^5=(k+\ell)^2(k+\ell)^3\ge k^{2}\ell^{3}
\end{equation*}
the (double) series on the right-hand side of \eqref{eq:M3<)k+l)5}
is bounded by
\begin{equation*}
\sum_{k,\myp \ell\ge 1} \frac{k\myp |a_k| \:\ell^2
|a_\ell|}{k^{2}\myp \ell^{3}}= \sum_{k=1}^\infty \frac{|a_k|}{k}
\sum_{\ell=1}^\infty \frac{|a_\ell|}{\ell}=A^+(1)^2 <\infty,
\end{equation*}
according to the lemma's hypothesis.
Thus, returning to \eqref{eq:M3<)k+l)5} we see that
$M_3=O(|n|^{5/3})$, and together with the lower bound
\eqref{eq:M-lower} this completes the proof of~\eqref{eq:sum(mu3)}.
\end{proof}

\section{A local limit theorem and the limit shape}\label{sec5}

\subsection{Statement of the theorem}\label{sec5.1}

The role of the local limit theorem in our approach is to yield the
asymptotics of the probability $\QQ_z\{\xi=n\}\equiv\QQ_z(\CP_n)$
appearing in the representation of the measure $\PP_n$ as a
conditional distribution, $\PP_n(\cdot)=\QQ_z(\cdot\mypp|\CP_n)
=\QQ_z(\cdot)/\QQ_z(\CP_n)$ (see Section~\ref{sec2.1}).

To prove such a theorem (see Theorem~\ref{th:LCLT} below), we will
require a technical condition on the generating function $\beta(u)$
as follows.

\begin{assumption}\label{as:7.1}
There exists a constant $\delta_*\myn>0$ such that for any
$\theta\in(0,1)$ the function $u\mapsto \ln\beta(u)$ \myp($u\in\CC$)
satisfies the inequality
\begin{equation}\label{eq:<a1}
\ln\beta(\theta) -\Re\myp(\ln\beta(\theta\mypp\rme^{\myp\rmi t}))\ge
\delta_* \mypp\theta\mypp(1-\cos t),\qquad t\in\RR\myp.
\end{equation}
\end{assumption}

\begin{remark}\label{rm:a1>0}
In terms of the coefficients $\{a_k\}$ in the power series expansion
of the function $\ln\beta(u)$ (see~\eqref{eq:ln-beta}), the
left-hand side of \eqref{eq:<a1} is expressed as
$\sum_{k=1}^\infty\myn a_k\myp \theta^k(1-\cos kt)$. Consequently,
if $a_1>0$ and $a_k\ge0$ for all $k\ge2$ then the inequality
\eqref{eq:<a1} is satisfied, with $\delta_*=a_1>0$.
\end{remark}

As before, we denote $\mu_z=\EE_{z}(\xi)$, \myp$K_z=
\Cov_z(\xi,\xi)$, \myp$V_z=K_z^{-1/2}$ (see Section~\ref{sec4.1}).
Consider the probability density function of a two-dimensional
normal distribution $\mathcal{N}(\mu_z,K_z)$ \,(i.e., with mean
$\mu_z$ and covariance matrix $K_z$), given by
\begin{equation}\label{eq:phi1}
f_{\mu_z\myn,\myp K_z}(x)= \frac{1}{2\pi\sqrt{\det
K_z\vphantom{^k}}}\,\exp\left(-{\textstyle\frac12}|(x-\mu_z)\myp
V_z|^2\right),\qquad x\in\RR^2.
\end{equation}

\begin{theorem}\label{th:LCLT}
Assume that $A^+(1)<\infty$ and suppose that Assumption\/
\textup{\ref{as:7.1}} holds. Then, uniformly in\/ $x\in\ZZ^2_+$\myp,
\begin{equation}\label{eq:LCLT}
\QQ_{z}\{\xi=x\}=f_{\mu_z\myn,\myp K_z}(x)+O(|n|^{-5/3}),\qquad
n\to\infty.
\end{equation}
\end{theorem}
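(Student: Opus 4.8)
The plan is to prove the local limit theorem via Fourier inversion, following the classical scheme for lattice random vectors but tailored to the two-dimensional sum $\xi=\sum_{x\in\calX}x\,\nu(x)$ of independent (but not identically distributed) terms. First I would write, by the inversion formula on the lattice $\ZZ^2$,
\begin{equation*}
\QQ_z\{\xi=m\}=\frac{1}{(2\pi)^2}\int_{[-\pi,\pi]^2}\rme^{-\rmi\langle t,m\rangle}\,\EE_z\bigl[\rme^{\rmi\langle t,\xi\rangle}\bigr]\,\dif t,
\end{equation*}
where by independence the characteristic function factorizes, $\EE_z[\rme^{\rmi\langle t,\xi\rangle}]=\prod_{x\in\calX}\varphi_\nu(\langle t,x\rangle;x)$, with $\varphi_\nu$ as in \textup{(\ref{eq:c.f.})}--\textup{(\ref{eq:ln_c.f.})}. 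On the other side, the density \textup{(\ref{eq:phi1})} equals the inverse Gaussian-type integral $\frac{1}{(2\pi)^2}\int_{\RR^2}\rme^{-\rmi\langle t,m-a_z\rangle}\rme^{-\frac12|tK_z^{1/2}|^2}\,\dif t$ (up to the obvious change of variables $t\mapsto tV_z$). Subtracting, the error $\QQ_z\{\xi=m\}-f_{a_z,K_z}(m)$ splits into the usual pieces, and the goal is to bound each by $O(|n|^{-5/3})$ uniformly in $m$.

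The standard decomposition I would use is: (i) the \emph{central region} $|tV_z|\le\varepsilon$ (equivalently $|t|\le\varepsilon\|K_z\|^{-1/2}\asymp\varepsilon|n|^{-2/3}$), where I perform a quadratic Taylor expansion of $\ln\EE_z[\rme^{\rmi\langle t,\xi\rangle}]$ around $0$, controlling the remainder by the third-order cumulant sum through Lemma \ref{lm:muk} and the Lyapunov coefficient $L_z\asymp|n|^{-1/3}$ from Lemma \ref{lm:7.1}; a Berry--Esseen-type estimate then gives an error of order $L_z$ times the Gaussian mass, i.e.\ $O(|n|^{-1/3})$ relative, hence $O(|n|^{-1/3}\cdot|n|^{-4/3})=O(|n|^{-5/3})$ absolute after accounting for the $|n|^{-4/3}$ size of $f_{a_z,K_z}$ and $\det K_z\asymp|n|^{8/3}$ (Lemma \ref{lm:detK}); (ii) the \emph{Gaussian tail} $|tV_z|>\varepsilon$ in the inverse-Gaussian integral, which is exponentially small in $\varepsilon^2$ and thus negligible; (iii) the \emph{intermediate region} $\varepsilon\le|tV_z|\le\delta|n|^{2/3}$ for a small fixed $\delta$ (rescaled: $\varepsilon|n|^{-2/3}\le|t|\le\delta$), where one still has a workable quadratic lower bound $\ln|\EE_z[\rme^{\rmi\langle t,\xi\rangle}]|\le -c|tK_z^{1/2}|^2\le -c'\varepsilon^2$, giving exponentially small contribution; and (iv) the \emph{remaining region} bounded away from the origin on the torus, $\delta\le|t|\le\pi$ (in each coordinate), which is where Assumption \ref{as:7.1} enters decisively.

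For step (iv) I would use \textup{(\ref{eq:|phi|})}: $\ln|\EE_z[\rme^{\rmi\langle t,\xi\rangle}]|=-\sum_{x\in\calX}\sum_{k\ge1}a_k z^{kx}\bigl(1-\cos(k\langle t,x\rangle)\bigr)$, and by inequality \textup{(\ref{eq:<a1})} of Assumption \ref{as:7.1} this is bounded above by $-C_1 a_1\sum_{x\in\calX}z^x\bigl(1-\cos\langle t,x\rangle\bigr)$. One then needs a lower bound of the form $\sum_{x\in\calX}z^x(1-\cos\langle t,x\rangle)\ge c\,|n|^{1/3}$ uniformly for $t$ bounded away from $0$ on the torus, which would force $|\EE_z[\rme^{\rmi\langle t,\xi\rangle}]|\le\exp(-c'|n|^{1/3})$ there, and integrating over the bounded region $[-\pi,\pi]^2$ gives a contribution far smaller than $|n|^{-5/3}$. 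The estimate on $\sum_{x\in\calX}z^x(1-\cos\langle t,x\rangle)$ I would handle by first passing, via the M\"obius machinery of Lemma \ref{lm:Mobius}, to sums over all of $\ZZ_+^2$, then observing that as $\alpha_j=\delta_j n_j^{-1/3}\to0$ the sum behaves like $\alpha_1^{-1}\alpha_2^{-1}$ times an integral $\iint_{\RR_+^2}\rme^{-\langle\alpha',y\rangle}(1-\cos\langle t',y\rangle)\,\dif y$ that stays positive uniformly in the relevant directions; one has to be slightly careful that $t$ remaining away from $0$ modulo $2\pi$ does \emph{not} let $\langle t,x\rangle$ stay near $0$ for all small $x\in\calX$, but the two independent generators $x=(1,0)$ and $x=(0,1)$ (both in $\calX$) already guarantee $(1-\cos t_1)+(1-\cos t_2)$ is bounded below, and this alone yields a lower bound of order $z_1+z_2\asymp 1$, which is more than enough after multiplication by the $\alpha^{-2}\asymp|n|^{2/3}$-type factor coming from the full sum over $\calX$.

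The main obstacle I anticipate is the bookkeeping in step (iii)--(iv) transition, i.e.\ obtaining a clean uniform quadratic lower bound $\ln|\EE_z[\rme^{\rmi\langle t,\xi\rangle}]|\le -c|tK_z^{1/2}|^2$ valid all the way out to a \emph{fixed} threshold $|t|\le\delta$ (not merely $|t|\le o(1)$): Assumption \ref{as:7.1} reduces the characteristic function estimate to the single-mode quantity $\sum_{x}z^x(1-\cos\langle t,x\rangle)$, but one then must compare this to the quadratic form $\langle t,K_z t^\top\rangle/2=\frac12\sum_x z^x\langle t,x\rangle^2\cdot(1+o(1))$ (using \textup{(\ref{eq:nu2})}, \textup{(\ref{D_zxi})}), and the inequality $1-\cos u\ge c\,u^2$ holds only for $|u|\le\pi$, whereas $\langle t,x\rangle$ can be large for $x$ with large norm even when $|t|$ is small. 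The resolution is to exploit that those large-$x$ terms carry exponentially small weight $z^{kx}=\rme^{-k\langle\alpha,x\rangle}$ and to split the $x$-sum at $|x|\sim\delta/|t|$; on $|x|\lesssim\delta/|t|$ one has $|\langle t,x\rangle|\lesssim\delta<\pi$ and the quadratic comparison is valid, while the tail $|x|\gtrsim\delta/|t|$ is controlled crudely. This is essentially the argument carried out in \cite{BZ4} for the special coefficients, and the cumulant formulation adopted here (Section \ref{sec2.1}) together with Assumption \ref{as:7.1} is precisely what makes it go through for the general $(a_k)$; I would present it as a lemma isolating the bound $|\EE_z[\rme^{\rmi\langle t,\xi\rangle}]|\le\exp(-c|tK_z^{1/2}|^2)$ for $|t|\le\delta$ and $\le\exp(-c'|n|^{1/3})$ for $\delta\le|t|\le\pi$, after which assembling \textup{(\ref{eq:LCLT})} from pieces (i)--(iv) is routine.
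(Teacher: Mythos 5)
Your overall plan---Fourier inversion, a Berry--Esseen comparison near the origin controlled by $L_z\asymp|n|^{-1/3}$, and a characteristic-function bound via Assumption~\ref{as:7.1} away from it---is the paper's plan (Section~\ref{sec7.5}: the paper's $I_1$, $I_2$, $I_3$ correspond to your (i), (ii), and (iii)+(iv) merged). Two steps, as written, fall short of the target rate $O(|n|^{-5/3})$. The fixed cutoff $\varepsilon$ for the central region is too small: after standardizing via $u=\lambda V_z^{-1}$, the Gaussian tail $\{|u|>\varepsilon\}$ contributes $O\bigl((\det K_z)^{-1/2}\rme^{-c\varepsilon^2}\bigr)\asymp|n|^{-4/3}\rme^{-c\varepsilon^2}$, and your intermediate region (iii), even granting the quadratic bound on $\ln|\varphi_{\xi_0}|$, contributes the same $O\bigl(|n|^{-4/3}\rme^{-c\varepsilon^2}\bigr)$---this is not $o(|n|^{-5/3})$ but the same order as $f_{a_z,K_z}(m)$ itself. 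The cutoff must grow with $n$. The paper takes $|u|\le L_z^{-1}\asymp|n|^{1/3}$, precisely the range on which the Berry--Esseen bound (\ref{16L}) of Lemma~\ref{lm:7.2_F}(b) is valid; the Gaussian tail is then $O\bigl(|n|^{-4/3}\rme^{-L_z^{-2}/2}\bigr)$ with $L_z^{-2}\asymp|n|^{2/3}$, hence superpolynomially small, and your region (iii) disappears entirely.

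The far-region estimate is also insufficient as stated. Restricting $J_\alpha(\lambda)$ in (\ref{J_0}) to the two points $(1,0),(0,1)$ gives only $J_\alpha\gtrsim z_1(1-\cos\lambda_1)+z_2(1-\cos\lambda_2)\asymp1$, whence $|\varphi_{\xi_0}|\le\rme^{-C_0}<1$; integrating a bounded constant over the torus gives $O(1)$, nowhere near $o(|n|^{-5/3})$. You need $J_\alpha\gtrsim|n|^{1/3}$, and the ``multiply by the $\alpha^{-2}$-type factor from the full sum'' remark is a heuristic, not a bound. The paper obtains the correct estimate by summing the full one-parameter family $x=(x_1,1)$, $x_1\ge0$ (all automatically in $\calX$); the geometric series yields $J_\alpha\gtrsim\alpha_1^{-1}\bigl(1-(1+\eta^2)^{-1/2}\bigr)\asymp|n|^{1/3}$, and---critically---this lower bound holds uniformly for $\eta\myp\alpha_1\le|\lambda_1|\le\pi$, not merely for $|\lambda_1|$ bounded away from zero. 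Combined with the observation that $|\lambda V_z^{-1}|>L_z^{-1}$ forces $\max\{|\lambda_1|/\alpha_1,|\lambda_2|/\alpha_2\}>\eta$, this single estimate covers the entire complement of the central region at once, so the separate quadratic lower bound you anticipated as ``the main obstacle'' for regions (iii)--(iv) is never needed.
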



\begin{corollary}\label{cor:Q}
Under the conditions of\/ Theorem\/ \textup{\ref{th:LCLT}}
\begin{equation}\label{sim}
\QQ_{z}\{\xi=n\}\asymp |n|^{-4/3}, \qquad n\to\infty.
\end{equation}
\end{corollary}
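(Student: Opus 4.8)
The plan is to read the corollary off directly from the local limit theorem, Theorem \textup{\ref{th:LCLT}}. Setting $m=n$ in \textup{(\ref{eq:LCLT})} and recalling \textup{(\ref{eq:phi1})}, we obtain
\[
\QQ_z\{\xi=n\}=\frac{1}{2\pi\sqrt{\det K_z\vphantom{^k}}}\,
\exp\!\left(-{\textstyle\frac12}\,\bigl|(n-a_z)\myp V_z\bigr|^2\right)+O\bigl(|n|^{-5/3}\bigr),
\]
so it suffices to prove that the Gaussian density satisfies $f_{a_z,K_z}(n)\asymp (n_1n_2)^{-2/3}$. Since Assumption \textup{\ref{as:c}} gives $(n_1n_2)^{2/3}\asymp|n|^{4/3}$ and $4/3<5/3$, the remainder term is then of strictly smaller order, so \textup{(\ref{sim})} follows.

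First I would handle the normalizing constant: by Lemma \textup{\ref{lm:detK}}, $\det K_z\sim 3\myp\kappa^{-2}(n_1n_2)^{4/3}$, hence $\sqrt{\det K_z}\asymp (n_1n_2)^{2/3}$ and $\bigl(2\pi\sqrt{\det K_z}\bigr)^{-1}\asymp (n_1n_2)^{-2/3}$.

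Next I would show that the exponential factor is bounded away from $0$ and $\infty$, for which it is enough to check that $\bigl|(n-a_z)\myp V_z\bigr|=O(1)$. Using submultiplicativity of the induced matrix norm $\|\cdot\|$ (so that $|xA|\le|x|\,\|A\|$), we have $\bigl|(n-a_z)\myp V_z\bigr|\le |n-a_z|\cdot\|V_z\|$. Theorem \textup{\ref{th:4.1}} yields $|n-a_z|=\bigl|\myp\EE_z(\xi)-n\bigr|=O(|n|^{2/3})$, while Lemma \textup{\ref{NV}} gives $\|V_z\|\asymp|n|^{-2/3}$; the product is $O(1)$, so the exponential in the display above is $\asymp 1$. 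Combining with the previous step, $f_{a_z,K_z}(n)\asymp (n_1n_2)^{-2/3}$, and absorbing the $O(|n|^{-5/3})$ error completes the argument.

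The proof is little more than a bookkeeping of orders, so I do not anticipate a genuine difficulty; the one point deserving attention --- and the reason one needs the refined estimate $\EE_z(\xi)-n=O(|n|^{2/3})$ of Theorem \textup{\ref{th:4.1}} rather than the weaker $o(|n|)$ coming from Theorem \textup{\ref{th:delta12}} --- is that the shift $n-a_z$, measured in the natural scale $\|V_z\|\asymp|n|^{-2/3}$ of the limiting Gaussian, must remain bounded; the matching exponents $\tfrac23$ in $|n-a_z|$ and $-\tfrac23$ in $\|V_z\|$ are precisely what makes this happen.
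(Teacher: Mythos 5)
Your proposal is correct and follows essentially the same route as the paper: bound $|(n-a_z)V_z|=O(1)$ via Theorem \ref{th:4.1} and Lemma \ref{NV}, obtain $f_{a_z,K_z}(n)\asymp(n_1n_2)^{-2/3}$ from Lemma \ref{lm:detK}, and then absorb the $O(|n|^{-5/3})$ remainder of Theorem \ref{th:LCLT}. The only difference is that you spell out the comparison $|n|^{-4/3}$ vs.\ $|n|^{-5/3}$ that the paper leaves implicit in ``now readily follows''.
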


With the asymptotic results of Sections \ref{sec3.3} and
\ref{sec4.1} at hand, it is not difficult to deduce the corollary
from the theorem.

\begin{proof}[Proof\/ of\/ Corollary\/ \textup{\ref{cor:Q}}] According to
Theorem~\ref{th:4.1}, we have $\mu_z =n+O(|n|^{2/3})$. Together with
the asymptotics of $\|V_z\|$ (see~\eqref{eq:KV}) this implies
\begin{align*}
|(n-\mu_z)\myp V_z|&\le |\myp n-\mu_z|\,{\cdot}\, \|V_z\|
=O(|n|^{2/3})\cdot |n|^{-2/3}=O(1).
\end{align*}
Hence, with the help of Lemma \ref{lm:detK} we get
\begin{align*}
f_{\mu_z\myn,\myp K_z}(n)&=\frac{1}{2\pi\sqrt{\det
K_z\vphantom{^k}}} \:\rme^{-|(n-\mu_z)V_z|^2\myn/2}\asymp
\frac{1}{\sqrt{\det K_z\vphantom{^k}}}\asymp |n|^{-4/3},
\end{align*}
and \eqref{sim} now readily follows from~\eqref{eq:LCLT}.
\end{proof}

\subsection{Estimates of the characteristic functions}\label{sec5.2}

Before proving Theorem \ref{th:LCLT}, we have to make some technical
preparations. Recall from Section~\ref{sec2.1} that, with respect to
the measure $\QQ_{z}$, the random variables $\{\nu(x)\}_{x\in
\calX}$ are independent and have the characteristic
functions~\eqref{eq:c.f.}. Hence, the characteristic function
$\varphi_{\xi}(\lambda):=\EE_{z}
(\rme^{\myp\rmi\langle\lambda,\,\xi\rangle})$ of the vector sum
$\xi=\sum_{x\in \calX} x\myp\nu(x)$ is given by
\begin{equation}\label{x.f_5_0}
\varphi_{\xi}(\lambda)=\prod_{x\in \calX} \varphi_{\nu(x)}(\langle
\lambda,x\rangle)=\prod_{x\in \calX} \frac{\beta(z^{x}
\rme^{\myp\rmi \langle \lambda,\myp
x\rangle})}{\beta(z^{x})}\myp,\qquad \lambda\in\RR^2.
\end{equation}

The next lemma provides a useful estimate (proved in
\cite[Lemma~7.12]{BZ4}) for the characteristic function
$\varphi_{\xi^0}(\lambda)=\rme^{-\langle\lambda,\myp
\mu_z\myn\rangle}\varphi_{\xi}(\lambda)$ of the centered random
vector $\xi^0\myn:=\xi-\mu_z$\myp. Recall that the Lyapunov ratio
$L_z$ is defined in \eqref{L3}, and that $V_z=K_z^{-1/2}$ (see
Section~\ref{sec4.1}).
\begin{lemma}\label{lm:7.2_F}
If\/ $y\in\RR^2$ is such that $|y|\le L_z^{-1}$ then
\begin{equation*}
\bigl|\varphi_{\xi^0}(y\myp V_z)- \rme^{-|y|^2\myn/2}\bigr| \le
16\myp L_z|y|^3\mypp \rme^{-|y|^2/6}.
\end{equation*}
\end{lemma}

Under Assumption \ref{as:7.1}, $\varphi_{\xi}(\lambda)$ admits a
simple global bound (cf.\ \cite[Lemma 7.13]{BZ4}).
\begin{lemma}\label{lm:7.3}
Suppose that Assumption\/ \textup{\ref{as:7.1}} is satisfied
\textup{(}with $\delta_*\myn>0$\textup{)}. Then
\begin{equation}\label{f_J}
|\varphi_{\xi}(\lambda)|\le\exp\{-\delta_*
J_{\alpha}(\lambda)\},\qquad \lambda\in\RR^2,
\end{equation}
where
\begin{equation}\label{J_0}
J_{\alpha}(\lambda):=\sum_{x\in \calX} \rme^{-\langle\alpha,\myp
x\rangle}\bigl(1-\cos\myn\langle\lambda,x\rangle\bigr),\qquad
\lambda\in\RR^2.
\end{equation}
\end{lemma}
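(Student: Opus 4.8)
The plan is to start from the product representation \eqref{x.f_5_0} and take logarithms of absolute values, reducing the bound to an additive estimate. Using \eqref{eq:|phi|}, we have $\ln|\varphi_{\nu_0}(t;x)|=\ln|\varphi_{\nu}(t;x)|=-\sum_{k=1}^\infty a_k z^{kx}(1-\cos kt)$ (the centering only contributes a phase factor and does not affect the modulus), so that
\begin{equation*}
\ln|\varphi_{\xi_0}(\lambda)|=\sum_{x\in\calX}\ln|\varphi_{\nu}(\langle\lambda,x\rangle;x)|
=-\sum_{x\in\calX}\sum_{k=1}^\infty a_k\myp z^{kx}\bigl(1-\cos k\langle\lambda,x\rangle\bigr).
\end{equation*}
The right-hand side is exactly where Assumption~\ref{as:7.1} enters: applying inequality \eqref{eq:<a1} with $\theta=z^x\in(0,1)$ and $t=\langle\lambda,x\rangle$ bounds the inner $k$-sum from below by $C_1 a_1 z^x(1-\cos\langle\lambda,x\rangle)$. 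Since $z^x=\rme^{-\langle\alpha,x\rangle}$ by \eqref{alpha} and $a_1=b_1>0$, summing over $x\in\calX$ gives
\begin{equation*}
-\ln|\varphi_{\xi_0}(\lambda)|\ge C_1 a_1\sum_{x\in\calX}\rme^{-\langle\alpha,x\rangle}\bigl(1-\cos\langle\lambda,x\rangle\bigr)=C_1 a_1\myp J_\alpha(\lambda),
\end{equation*}
which is \eqref{f_J} with $C_0:=C_1 a_1>0$. The nonnegativity of $J_\alpha(\lambda)$ is immediate since each summand is nonnegative.

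First I would verify that all the infinite sums are absolutely convergent and that interchanging the order of summation over $x$ and $k$ is legitimate — this follows from $A^+(2)<\infty$ (indeed even $A^+(1)<\infty$ would suffice here, compare the estimates in the proof of Lemma~\ref{pr:F0}), since $\sum_{x\in\calX}z^{kx}$ is geometrically summable and $\sum_k|a_k|\sum_x z^{kx}\le\sum_k|a_k|\bigl(\tfrac{z_1^k}{1-z_1}+\tfrac{z_2^k}{(1-z_1)(1-z_2)}\bigr)<\infty$. Then I would record \eqref{eq:|phi|} for the modulus of the marginal characteristic function, noting that $|\varphi_{\nu_0}(t;x)|=|\varphi_\nu(t;x)|$ because $\nu_0$ differs from $\nu$ only by the deterministic shift $-\EE_z[\nu(x)]$. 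Finally I would substitute Assumption~\ref{as:7.1} termwise and sum.

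I do not expect a serious obstacle here: the lemma is essentially a bookkeeping consequence of Assumption~\ref{as:7.1}, which was precisely designed to make this step work (cf.\ Remark~\ref{rm:a1>0}, where the case of all $a_k>0$ gives $C_1=1$ directly). The only mild subtlety is making sure the logarithm of the product is handled correctly — i.e.\ that $\beta(z^x\rme^{\rmi t})\ne0$ so that $\ln|\varphi_\nu(t;x)|$ is finite and the series \eqref{eq:|phi|} is valid for all $t\in\RR$; but $\beta$ has nonnegative Taylor coefficients with $\beta(0)=1$, so $|\beta(z^x\rme^{\rmi t})|$ could in principle vanish, yet in that case $\ln|\varphi_{\xi_0}(\lambda)|=-\infty$ and \eqref{f_J} holds trivially. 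Thus the argument is complete once the convergence bookkeeping is in place.
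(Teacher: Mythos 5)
Your proof is correct and follows essentially the same route as the paper: both start from the product formula for $\varphi_\xi$, observe that $|\varphi_{\xi_0}|=|\varphi_\xi|$, expand $\ln|\varphi_\nu(t;x)|$ via (\ref{eq:|phi|}), and apply Assumption~\ref{as:7.1} termwise with $\theta=z^x$, $t=\langle\lambda,x\rangle$ to get $C_0=C_1 b_1=C_1 a_1$. The paper invokes the equivalent form (\ref{eq:<a1-1}) directly rather than (\ref{eq:<a1}), but that is the only cosmetic difference.
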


\begin{proof}
From \eqref{x.f_5_0} we have
\begin{equation}\label{eq:J_1}
|\varphi_{\xi}(\lambda)| =\exp\Biggl\{\sum_{x\in \calX}
\ln\mynn\bigl|\varphi_{\nu(x)}(\langle\lambda,x\rangle)\bigr|\Biggr\}.
\end{equation}
Furthermore, using \eqref{eq:ln_c.f.} and Assumption \ref{as:7.1}
(with $\theta=z^x$, see \eqref{eq:<a1}), we have
\begin{align*} \ln\mynn|\varphi_{\nu(x)}(t)|
=\Re\myp(\ln\varphi_{\nu(x)}(t)) &=
\Re\myp(\ln \beta(z^x\rme^{\myp\rmi t})) -\ln\beta(z^x)\\[.2pc]
&\le -\delta_* \myp z^{x}\myp(1-\cos t\myp),\qquad t\in\RR\myp.
\end{align*}
Utilizing this estimate under the sum in \eqref{eq:J_1} (with
$t=\langle\lambda,x\rangle$) and substituting $z^x=\rme^{-\langle
\alpha,\myp x\rangle}$ (see the notation~\eqref{alpha}), we arrive
at the inequality~\eqref{f_J}.
\end{proof}

\subsection{Proof of Theorem\/ \textup{\ref{th:LCLT}}}
\label{sec5.3}

By definition, the characteristic function of the random vector
$\xi$ is given by the Fourier series
\begin{equation*}
\varphi_{\xi}(\lambda)
=\sum_{x\in\ZZ_+^2}\QQ_{z}\{\xi=x\}\,\rme^{\myp\rmi\langle
\lambda,\myp{}m\rangle},\qquad \lambda\in\RR^2,
\end{equation*}
hence the Fourier coefficients are expressed as
\begin{equation}\label{l_1}
\QQ_{z}\{\xi=x\}=\frac{1}{4\pi^2}\int_{T^2} \rme^{-\rmi\langle
\lambda,\myp{}x\rangle}\mypp
\varphi_{\xi}(\lambda)\,\dif{}\lambda,\qquad x\in\ZZ_+^2\myp,
\end{equation}
where $T^2:=\{\lambda=(\lambda_1,\lambda_2)\in\RR^2\colon
|\lambda_1|\le\pi,\, |\lambda_2|\le\pi\}$. On the other hand, the
characteristic function corresponding to the normal probability
density $f_{\mu_z\myn,\myp K_z}(\cdot)$ (see~\eqref{eq:phi1}) is
given by
\begin{equation*}
\varphi_{\mu_z\myn,\myp K_z}(\lambda)=
\rme^{\myp\rmi\langle\lambda,\myp{}\mu_z\myn\rangle-|\lambda
V_z^{-1}\myn|{\vphantom{(_z}}^2\myn/2},\qquad \lambda\in\RR^2,
\end{equation*}
so by the Fourier inversion formula
\begin{equation}\label{f_o}
f_{\mu_z\myn,\myp K_z}(x)= \frac{1}{4\pi^2}
\int_{\RR^2}\rme^{-\rmi\langle\lambda,\myp
x\rangle}\mypp\rme^{\myp\rmi\langle\lambda,\myp
\mu_z\myn\rangle-|\lambda
V_z^{-1}\myn|{\vphantom{(_z}}^2\myn/2}\,\dif{}\lambda\myp,\qquad
x\in\ZZ^2_+\myp.
\end{equation}

Denote $D_z:=\{\lambda\in\RR^2\colon |\lambda V_z^{-1}\myn|>
L_z^{-1}\}$. If $\lambda\in D_z^c:=\RR^2\setminus D_z$ then, on
account of the asymptotics of $\|V_z\|$ and $L_z$ (see \eqref{eq:KV}
and \eqref{eq:L}, respectively), we get
\begin{equation*}
|\lambda|=|\lambda V_z^{-1} V_z|\le |\lambda
V_z^{-1}\myn|\cdot\|V_z\| \le L_z^{-1} \|V_z\|=O(|n|^{-1/3})=o(1),
\end{equation*}
which implies that $D_z^c\subset T^2$ for all $n=(n_1,n_2)$ large
enough. Hence, subtracting \eqref{f_o} from \eqref{l_1} it is easy
to see that, uniformly in $x\in\ZZ^2_+$\myp,
\begin{equation}\label{I}
\bigl|\QQ_{z}\{\xi=x\}-f_{\mu_z\myn,\myp K_z}(x)\bigr|\le
I_1+I_2+I_3\myp,
\end{equation}
where
\begin{align}
\label{eq:I1} &I_1:=\frac{1}{4\pi^2}\int_{D_z^c}
      \bigl|\varphi_{\xi}(\lambda)-\rme^{\myp\rmi
      \langle \lambda,\myp \mu_z\myn\rangle-|\lambda V_z^{-1}\myn|{\vphantom{(_z}}^2\myn/2}
      \bigr|\,\dif{}\lambda\myp,\\[.2pc]
\label{eq:I2} &I_2:=\frac{1}{4\pi^2}\int_{D_z}
      \rme^{-|\lambda V_z^{-1}\myn|{\vphantom{(_z}}^2\myn/2}\,\dif{}\lambda\myp,\\[.2pc]
\label{eq:I3} &I_3:=\frac{1}{4\pi^2}
      \int_{T^2\cap D_z}
      \!|\varphi_{\xi}(\lambda)|\:\dif{}\lambda\myp.
\end{align}

By the substitution $\lambda=y\myp V_z$, the integral \eqref{eq:I1}
is reduced to
\begin{align}
\notag I_1&=\frac{\det V_z}{4\pi^2} \int_{|y|\le L_z^{-1}}
\bigl|\varphi_{\xi}(y V_z)- \rme^{\myp\rmi
      \langle yV_z\myn,\myp \mu_z\myn\rangle-|y|^2\myn/2}\bigr|\,\dif{}y\\
\notag &=\frac{1}{4\pi^2\sqrt{\det K_z}} \int_{|y|\le L_z^{-1}}
\bigl|\varphi_{\xi^0}(y V_z)-
\rme^{-|y|^2\myn/2}\bigr|\,\dif{}y\\[.2pc]
\label{I1} &=O(|n|^{-4/3}) \myp L_z\int_{\RR^2}
|y|^3\mypp\rme^{-|y|^2\myn/6}\,\dif{}y= O(|n|^{-5/3}),
\end{align}
on account of Lemmas \ref{lm:detK}, \ref{lm:7.1} and \ref{lm:7.2_F}.
Similarly, using the change of variables $\lambda=y\myp V_z$ in the
integral \eqref{eq:I2} and passing to the polar coordinates, by
Lemmas \ref{lm:detK} and~\ref{lm:7.1} we get
\begin{align}
\notag I_2&= \frac{\det V_z}{4\pi^2} \int_{|y|>L_z^{-1}}
\!\rme^{-|y|^2\myn/2}\,\dif{y}\\[.2pc]
\label{I2} &= \frac{\det V_z}{2\pi} \int_{L_z^{-1}}^\infty r\mypp
\rme^{-r^2\myn/2}\,\dif{}r=O(|n|^{-4/3})\,
\rme^{-L_z^{-2}\myn/2}=o(|n|^{-5/3}).
\end{align}

Finally, let us turn to the integral \eqref{eq:I3}. Note that if
$\lambda\in\D_z$ (i.e., $|\lambda V_z^{-1}\myn|>L_z^{-1}$), then
$|\lambda|>\eta\myp|\alpha|$ for a small enough constant $\eta>0$,
and hence $\max\{|\lambda_1|/\alpha_1,|\lambda_2|/\alpha_2\}>\eta$;
for otherwise, from \eqref{alpha} and Lemmas \ref{lm:K_z}
and~\ref{lm:7.1} it would follow
\begin{align*}
1<L_z|\lambda V_z^{-1}\myn|&\le L_z\myp\eta\myp|\alpha|\,{\cdot}\,
\|K_z\|^{1/2}= O(\eta)\to0\quad\text{as}\ \ \eta\downarrow0,
\end{align*}
which is a contradiction.  Thus, also using Lemma \ref{lm:7.3} to
estimate the integrand in \eqref{eq:I3}, we get the bound
\begin{align}\label{I3}
I_3&\le \frac{1}{4\pi^2}\sum_{j=1}^2 \int_{T^2}
\mynn\bfOne_{\{|\lambda_j|>\eta\alpha_j\}}(\lambda)\,\rme^{-\delta_*
J_{\alpha}(\lambda)}\,\dif{}\lambda\myp,
\end{align}
where $\bfOne_{B}(\lambda)$ is the indicator of a set
$B\subset\RR^2$. To estimate the first integral in \eqref{I3} (i.e.,
with $j=1$), let us keep in the summation \eqref{J_0} only the pairs
of the form $x=(x_1,1)$, \,$x_1\in\ZZ_+$\myp, giving a lower bound
\begin{align}
\notag J_{\alpha}(\lambda) \ge \sum_{x_1=0}^\infty \rme^{-\alpha_1
x_1}\!\left(1-\Re\myp(\rme^{\myp\rmi (\lambda_1
x_1+\lambda_2)})\right) &=\frac{1}{1-\rme^{-\alpha_1}}-
\Re\left(\frac{\rme^{\myp\rmi\lambda_2}}{1-\rme^{- \alpha_1+\rmi\lambda_1}}\right)\\
\label{J_1}& \ge
\frac{1}{1-\rme^{-\alpha_1}}-\frac{1}{|1-\rme^{-\alpha_1 +
\rmi\lambda_1}|}\myp,
\end{align}
because $\Re\myp(s)\le |s|$ for any $s\in\CC$\myp. Since
$\eta\mypp\alpha_1\le|\lambda_1|\le \pi$, we have
\begin{align*}
|1-\rme^{-\alpha_1+\rmi\lambda_1}|
&\ge|1-\rme^{-\alpha_1+\rmi\eta\myp\alpha_1}|\sim\alpha_1
\sqrt{1+\eta^2} \qquad (\alpha_1\to0).
\end{align*}
Substituting this estimate into \eqref{J_1}, we conclude that
$J_\alpha(\lambda)$ is asymptotically bounded below by
$C(\eta)\mypp\alpha_1^{-1}\mynn \asymp |n|^{1/3}$ (with
$C(\eta):=1-(1+\eta^2)^{-1/2}>0$), uniformly in $\lambda$ such that
$\eta\mypp\alpha_1\le|\lambda_1|\le \pi$. Thus, the first integral
in \eqref{I3} is bounded by
\begin{equation*}
O(1)\exp\bigl(-\myp\const \cdot |n|^{1/3}\bigr)=o(|n|^{-5/3}).
\end{equation*}
The second integral in \eqref{I3} (with $j=2$) is estimated in a
similar fashion by reducing the summation in \eqref{J_0} to that
over the pairs $x=(1,x_2)$ only.

As a result, we get that $I_3=o(|n|^{-5/3})$. Substituting this
estimate, together with \eqref{I1} and \eqref{I2}, into \eqref{I} we
obtain \eqref{eq:LCLT}, and the proof of Theorem \ref{th:LCLT} is
complete.

\subsection{Proof of the limit shape results}\label{sec5.4}

With all preparations at hand, we are finally in a position to prove
the uniform convergence of the scaled polygonal paths
$\tilde\xi_n(\cdot):=(n_1^{-1}\xi_1(\cdot),\myp
n_2^{-1}\xi_2(\cdot))$ to the limit
$g^*(\cdot)=(g_1^*(\cdot),g_2^*(\cdot))$ \emph{in probability} with
respect to both measures $\QQ_z$ and $\PP_n$.
Note that Theorems \ref{th:LSQ} and \ref{th:LSP} below can be easily
reformulated using the tangential distance $d_{\mathcal
T}(\tilde{\varGamma}_n,\gamma^*)$ defined in \eqref{eq:d-T} (cf.\
Theorem \ref{th:main1} which is stated in these terms).

Let us first establish the universality of the limit shape under the
measure $\QQ_z$.
\begin{theorem}\label{th:LSQ}
Under Assumption \textup{\ref{as:z}}, for each\/ $\varepsilon>0$ we
have
\begin{equation*}
\lim_{n\to\infty} \QQ_{z}\biggl\{\sup_{0\le t\le\infty}\bigl|\myp
n_j^{-1}\xi_j(t)-g^*_j(t)\bigr| \le\varepsilon\biggr\}=1\qquad
(j=1,2).
\end{equation*}
\end{theorem}

\begin{proof}
By Theorems \ref{th:3.2} and \ref{th:8.1.1a}, the expectation of the
random process $n_j^{-1}\xi_j(t)$ uniformly converges to $g^*_j(t)$
as $n\to\infty$. Therefore, we only need to check that, for each
$\varepsilon>0$,
\begin{equation}\label{eq:lim-Q}
\lim_{n\to\infty}\QQ_{z}\biggl\{\sup_{0\le t\le\infty}
n_j^{-1}\bigl|\myp\xi_j(t)-\EE_{z}[\myp\xi_j(t)]\bigr|>\varepsilon\biggr\}=0.
\end{equation}

Note that the random process
$\xi^0_{j}(t):=\xi_j(t)-\EE_{z}[\myp\xi_j(t)]$ has independent
increments and zero mean, hence it is a martingale with respect to
the filtration ${\mathcal F}_t=\sigma\{\nu(x),\, x\in \calX(t)\}$,
\,$t\in[0,\infty]$. From the definition of $\xi_j(t)$
(see~\eqref{xi(t)}), it is also clear that $\xi^0_{j}(\cdot)$ is
\emph{c\`adl\`ag} (i.e., its paths are everywhere right-continuous
and have left limits). Therefore, applying the Doob--Kolmogorov
submartingale inequality (see, e.g., \cite[Theorem~6.14,
p.~99]{Yeh})
and using Theorem \ref{th:K}, we obtain
\begin{align*}
&\QQ_{z}\biggl\{\sup_{0\le t\le\infty}\myn|\myp\xi^0_{j}(t)|>
\varepsilon\myp n_j\biggr\}
\le\frac{\Var_z(\xi_j(\infty))}{\varepsilon^2\myp n_j^2}\asymp
|n|^{-2/3}\to0,\qquad n\to\infty.
\end{align*}
Hence, the limit \eqref{eq:lim-Q} follows.
\end{proof}

Let us now prove our main result about the universality of the limit
shape under the measure $\PP_n$ (cf.\ Theorem~\ref{th:main1}).

\begin{theorem}\label{th:LSP}
Let $A^+(1)<\infty$ and Assumption \textup{\ref{as:7.1}} be
satisfied. Then for any\/ $\varepsilon>0$
\begin{equation*}
\lim_{n\to\infty} \PP_n\biggl\{\sup_{0\le t\le\infty}\bigl|\myp
n_j^{-1}\xi_j(t)-g^*_j(t)\bigr| \le\varepsilon\biggr\}=1\qquad
(j=1,2).
\end{equation*}
\end{theorem}

\begin{proof}
Like in the proof of Theorem \ref{th:LSQ}, the claim is reduced to
the limit (cf.~\eqref{eq:lim-Q})
\begin{equation}\label{A1-0}
\lim_{n\to\infty} \PP_n\biggl\{\sup_{0\le t\le\infty}
\mynn|\myp\xi^0_{j}(t)|>\varepsilon\myp n_j\biggr\}=0,
\end{equation}
where $\xi^0_{j}(t)=\xi_j(t)-\EE_{z}[\myp\xi_j(t)]$.
Using the definition \eqref{Pn} we easily get the bound
\begin{align}\label{A1}
\PP_n\biggl\{\sup_{0\le t\le\infty}\mynn|\myp\xi^0_{j}(t)|>
\varepsilon\myp n_j\biggr\} \le \frac{\QQ_{z}\!\left\{\sup_{0\le
t\le\infty}\myn|\myp\xi^0_{j}(t)|> \varepsilon\myp
n_j\right\}}{\QQ_{z}\{\xi=n\}}\myp.
\end{align}
Again applying the Doob--Kolmogorov submartingale inequality
\cite[Theorem~6.14, p.~99]{Yeh} (but now with the sixth moment) and
using Lemma \ref{lm:6.7}, we obtain
\begin{equation}\label{A1-1}
\QQ_{z}\biggl\{\sup_{0\le
t\le\infty}\mynn|\myp\xi^0_{j}(t)|>\varepsilon\myp n_j \biggr\}\le
\frac{\EE_{z}\myn\bigl[(\xi_j^0)^{6}\bigr]}{\varepsilon^6 \myp
n_j^{6}} \asymp
|n|^{-2}.
\end{equation}
On the other hand, by Corollary \ref{cor:Q}
\begin{equation}\label{A1-2}
\QQ_{z}\{\xi=n\}\asymp |n|^{-4/3}.
\end{equation}
Combining \eqref{A1-1} and \eqref{A1-2}, we conclude that the
right-hand side of \eqref{A1} is dominated by a quantity of order of
$O(|n|^{-2/3})\to0$, and so the limit in \eqref{A1-0} follows.
\end{proof}

\section{Examples}\label{sec6}

%

Let us now consider a few illustrative examples by specifying the
generating function $u\mapsto \beta(u)=\sum_{k=0}^\infty b_k u^k$
(see \eqref{eq:beta}). Although the associated multiplicative
measures $\QQ_z$ and $\PP_n$ are defined primarily in terms of the
coefficients $\{b_k\}$ (see \eqref{Q1} and \eqref{condP},
respectively), explicit expressions for $b_k$ may be complicated, so
we will not always attempt to give such expressions. For our
purposes, it is more important to focus on the function $u\mapsto
\ln\beta(u)$ and its power expansion coefficients $\{a_k\}$, since
these are the ingredients that determine the convergence to the
limit shape \eqref{eq:gamma0}. In particular, we will have to check
the basic condition $A^+(2)<\infty$ (see Assumption~\ref{as:z}), as
well as the refined condition $A^+(1)<\infty$ and Assumption
\ref{as:7.1}, both needed for the limit shape result under the
measure $\PP_n$ (see Theorem~\ref{th:LSP}).

\begin{remark}
It is worth pointing out that Examples \ref{ex:1}, \ref{ex:2} and
\ref{ex:3} have direct analogs in the theory of decomposable
combinatorial structures, corresponding to the well-known
meta-classes of \emph{multisets}, \emph{selections}, and
\emph{assemblies}, respectively (see \cite{AT}
and~\cite[\S\myp2.2]{ABT}). For further details about this
correspondence and, more generally, for an extensive discussion of
the combinatorial interpretation of the generating functions
described in Examples \ref{ex:1}\myp--\,\ref{ex:6} below, the reader
is referred to the recent paper \cite[\S\S\,6.1,~6.2]{Bogachev}.
\end{remark}

\begin{example}
\label{ex:1} For $r\in(0,\infty)$, $\rho\in(0,1]$, let $\QQ_{z}$ be
a measure on the space $\CP$ determined by the formula \eqref{Q}
with coefficients
\begin{equation*}
b_k=\binom{r+k-1}{k}\, \rho^k,\qquad k\in\ZZ_+\myp.
\end{equation*}
A particular case with $\rho=1$ was considered in \cite{BZ4}
(cf.~\eqref{eq:b-k-r}). Note that $b_0=1$, in accordance with our
convention in Section~\ref{sec2.1}, and $b_1=r\rho$. By the binomial
expansion formula, the generating function of the sequence
\eqref{eq:b-k-r} is given by
\begin{equation}\label{G1}
\beta(u)=(1-\rho u)^{-r}, \qquad |u|<\rho^{-1},
\end{equation}
and formula \eqref{Q} specializes to
\begin{equation}\label{Qb1}
\QQ_{z}\{\nu(x)=k\}=\binom{r+k-1}{k}\, \rho^k z^{k x}(1-\rho
z^x)^r,\ \quad k\in\ZZ_+\ \quad (x\in\calX),
\end{equation}
which is a negative binomial distribution with parameters $r$ and
$p=1-\rho z^x$.

If $r=1$ then $b_k=\rho^k$, \,$\beta(u)=(1-\rho u)^{-1}$ and,
according to~\eqref{Qb1},
\begin{equation*}
\QQ_{z}\{\nu(x)=k\}= \rho^k z^{k x}(1-\rho z^x),\ \quad k\in\ZZ_+\
\quad (x\in\calX).
\end{equation*}
In turn, from formulas \eqref{eq:b-Gamma} and \eqref{condP} we get
\begin{equation}\label{eq:r=1}
\PP_n(\varGamma)=\frac{\rho^{N_{\varGamma}}}{\sum_{\varGamma'\in\CP_n}
\rho^{N_{\varGamma'}}},\qquad \varGamma\in\CP_n,
\end{equation}
where $N_{\varGamma}:= \sum_{x\in \calX} \nu(x)$ is the total number
of integer points on $\varGamma\setminus\{0\}$. Furthermore, if also
$\rho=1$ then \eqref{eq:r=1} is reduced to the uniform distribution
on $\CP_n$ (see~\eqref{condP}),
$$
\PP_n(\varGamma)=\frac{1}{\#(\CP_n)}\myp,\qquad \varGamma\in\CP_n.
$$

In the general case, using \eqref{G1} we note that
$$
\ln \beta(u)=-r\ln\myn(1-\rho u)=r\sum_{k=1}^\infty\frac{\rho^k
u^k}{k}\myp,
$$
and so the coefficients $\{a_k\}$ in the expansion
\eqref{eq:ln-beta} are given by
$$
a_k=\frac{r\rho^k}{k}>0,\qquad k\in\NN\qquad (0<\rho\le 1).
$$
As pointed out in Remark \ref{rm:a1>0}, this implies that Assumption
\ref{as:7.1} is satisfied; also, it readily follows that
$A^+(\sigma)<\infty$ for any $\sigma>0$ (and each $\rho\in(0,1]$).
\end{example}

\begin{example}
\label{ex:2} For \,$m\in\NN$, \,$\rho\in(0,1]$, consider the
generating function
\begin{equation}\label{eq:f(s)2}
\beta(u)=(1+\rho u)^m,\qquad |u|<\rho^{-1},
\end{equation}
with the coefficients in the expansion \eqref{eq:beta} given by
\begin{equation*}
b_k=\binom{m}{k}\,\rho^k=\frac{m\myp(m-1)\cdots(m-k+1)}{k!}\,\rho^k,\qquad
k=0,1,\dots,m.
\end{equation*}
In particular, $b_0=1$, $b_1=m\rho$. Accordingly, the formula
\eqref{Q} gives a binomial distribution
\begin{equation}\label{Qb2}
\QQ_{z}\{\nu(x)=k\}=\binom{m}{k} \frac{\rho^k z^{k x}}{(1+\rho
z^x)^{m}}\myp,\ \quad k=0,1,\dots,m\ \quad (x\in\calX),
\end{equation}
with parameters $m$ and $p=\rho z^x(1+\rho z^x)^{-1}$.

In the special case $m=1$, the measure $\QQ_z$ is concentrated on
the subspace $\check{\CP}$ of polygonal lines with ``simple'' edges,
that is, containing no lattice points between the adjacent vertices.
Here we have $b_0=1$, $b_1=\rho$, and $b_k=0$ \,($k\ge2$), so that
\eqref{Qb2} is reduced to
\begin{equation*}
\QQ_{z}\{\nu(x)=k\}= \frac{\rho^k z^{k x}}{1+\rho z^x}\myp,\ \quad
k=0,1\ \quad (x\in\calX).
\end{equation*}
Accordingly, the formula \eqref{condP} specifies on the
corresponding subspace $\check{\CP}_n$ the distribution
\begin{equation}\label{eq:r=1-2}
\PP_n(\varGamma)=\frac{\rho^{N_{\varGamma}}}{\sum_{\varGamma'\in\check\CP_n}
\rho^{N_{\varGamma'}}}\myp,\qquad \varGamma\in \check{\CP}_n,
\end{equation}
where the number of integer points $N_\varGamma$ coincides here with
the number of vertices on $\varGamma\setminus\{0\}$. Furthermore, if
also $\rho=1$ then \eqref{eq:r=1-2} is reduced to the uniform
distribution on $\check{\CP}_n$,
\begin{equation*}
\PP_n(\varGamma)=\frac{1}{\# (\check{\CP}_{n})}\myp,\qquad
\varGamma\in \check{\CP}_n.
\end{equation*}

In the general case, from \eqref{eq:f(s)2} we obtain
\begin{equation}\label{eq:ln(beta)2}
\ln \beta(u)=m\myp\ln\myn(1+\rho u)=m\sum_{k=1}^\infty
\frac{(-1)^{k-1}\rho^k}{k}\,u^k,
\end{equation}
hence the coefficients $\{a_k\}$ in the expansion \eqref{eq:ln-beta}
are given by
$$
a_k=\frac{m\myp(-1)^{k-1}\rho^k}{k},\qquad
k\in\NN\qquad(0<\rho\le1),
$$
and in particular $a_1=m\rho>0$. Note that $A^+(\sigma)<\infty$ for
any $\sigma>0$. Finally, let us check that Assumption \ref{as:7.1}
holds.
Using \eqref{eq:ln(beta)2} we obtain, for any $\theta\in(0,1)$ and
all $t\in\RR$\myp,
\begin{align*}
\ln\beta(\theta)-\Re\myp(\ln\beta(\theta\mypp\rme^{\rmi t}))
&=m\myp\ln\myn(1+\rho \mypp\theta)-m\mypp\Re\myp\bigl(\ln\myn(1+\rho
\mypp\theta\mypp\rme^{\rmi t})\bigr)\\
&=m\myp\ln\myn(1+\rho\mypp\theta)-m\myp\ln\myn|1+\rho\mypp\theta\mypp\rme^{\rmi t}|\\
&=-\frac{m}{2}\ln\!\myn\left(\frac{1+2\rho\mypp\theta\cos t+\rho^2\theta^2}{(1+\rho\mypp\theta)^2}\right)\\
&\ge -\frac{m}{2}\left(\frac{1+2\rho\mypp\theta\cos t+
\rho^2\theta^2}{(1+\rho\mypp\theta)^2}-1\right)\\
&= \frac{m\rho\,\theta\mypp(1-\cos t)}{(1+\rho\mypp\theta)^2}\ge
\frac{m\rho}{(1+\rho)^2}\,\theta\mypp(1-\cos t)\myp.
\end{align*}
Thus, the inequality \eqref{eq:<a1} holds with
$\delta_*=m\rho/(1+\rho)^2>0$.
\end{example}

\begin{example}
\label{ex:3} For $b\in(0,\infty)$, \,$\rho\in[0,1]$, consider the
generating function
\begin{equation}\label{eq:beta3}
\beta(u)=\exp\left(\frac{b\myp u}{1-\rho
u}\right)=\exp\left(b\sum_{k=1}^\infty u^{k}\rho^{k-1}\right),\qquad
|u|<\rho^{-1}.
\end{equation}
Clearly, the corresponding coefficients $b_k$ in the expansion
\eqref{eq:beta} are positive, with $b_0=1$, \,$b_1=b$,
\,$b_2=\frac12\myp b^2+b\myp\rho$, etc. More systematically, one can
use the well-known Fa\`a di Bruno's formula
(see, e.g., \cite[Ch.~I, \S12, p.~34]{Jordan}) to obtain (for
$\rho>0$)
\begin{equation}\label{eq:Faa}
b_k=\rho^k\sum_{m=1}^k\left(\frac{\,b\,}{\rho}\right)^m\!\!\sum_{(j_1\myn,\dots,\myp
j_k) \myp\in\myp\mathcal{J}_m}\frac{1}{j_1\myn!\cdots j_k !}\myp,
\qquad k\in\NN,
\end{equation}
where $\mathcal{J}_m$ is the set of all nonnegative integer
$k$-tuples $(j_1,\dots,j_k)$ such that $j_1+\dots+j_k=m$ and $1\cdot
j_1+2\cdot j_2+\dots+k\cdot j_k =k$.

Taking the logarithm of \eqref{eq:beta3}, we see that the
coefficients $\{a_k\}$ in \eqref{eq:ln-beta} are given by
\begin{equation}\label{eq:ak}
a_k=b\myp\rho^{k-1}>0,\qquad k\in\NN\qquad (0<\rho\le1).
\end{equation}
Therefore, Assumption \ref{as:7.1} is automatic (see
Remark~\ref{rm:a1>0}); moreover, $A^+(\sigma)<\infty$ for any
$\sigma>0$, except for the case $\rho=1$ where $A^+(\sigma)<\infty$
only for $\sigma>1$.

In the special case $\rho=0$, we have $\beta(u)=\rme^{\myp bu}$ and
the expression \eqref{eq:Faa} is reduced to $b_k=b^k\myn/k!$\myp,
whereas \eqref{eq:ak} simplifies to $a_1=b$ and $a_k=0$ for $k\ge2$.
In this case, the random variables $\nu(x)$ ($x\in\calX$) have a
Poisson distribution with parameter $b\myp z^x$,
\begin{equation*}
\QQ_{z}\{\nu(x)=k\}=\frac{b^k z^{k x}}{k!}\,\rme^{-b\myp z^x},\
\quad k \in\ZZ_+\ \quad (x\in\calX),
\end{equation*}
which leads, according to \eqref{condP}, to the following
distribution on $\CP_n$
\begin{equation*}
\PP_n(\varGamma)=\left(\sum_{\{k^{\myp\prime}_x\}\in\CP_n}
\prod_{x\in\calX}\frac{b^{\myp
k^{\myp\prime}_x}}{k^{\myp\prime}_x!}\right)^{-1}
\prod_{x\in\calX}\frac{b^{\myp k_x}}{k_x!}\myp,\qquad
\varGamma\leftrightarrow\{k_x\}\in \CP_n.
\end{equation*}
\end{example}

\begin{example}\label{ex:4}
Extending Example \ref{ex:3} (for simplicity, with $b=1$), let us
set for $r>0$
and $\rho\in(0,1]$
\begin{equation}\label{eq:beta4a}
\beta(u):=\exp\!\mynn\left(\frac{u}{(1- \rho\myp
u)^r}\right)\!,\qquad |u|<\rho^{-1}.
\end{equation}
Taking the logarithm of \eqref{eq:beta4a} we get the power series
expansion (cf.~\eqref{G1})
\begin{equation}\label{eq:H04}
\ln \beta(u)=\frac{u}{(1-\rho\myp u)^r}=\sum_{k=1}^\infty
\binom{r+k-2}{k-1}\myp\rho^{k-1} u^{k},
\end{equation}
which has positive coefficients $a_k$ (cf.~\eqref{eq:b-k-r}). Hence,
Assumption \ref{as:7.1} is satisfied by virtue of
Remark~\ref{rm:a1>0}. To check the condition $A^+(\sigma)<\infty$,
observe using Stirling's asymptotic formula for the gamma function
(see \cite[\S12.5, p.\;130]{Cramer}) that
\begin{equation*}
a_k=\binom{r+k-2}{k-1}\myp\rho^{k-1}
=\frac{\Gamma(k+r-1)}{\Gamma(r)\mypp\Gamma(k)}\mypp\rho^{k-1}\sim
\frac{k^{\myp r-1}}{\Gamma(r)}\,\rho^{k-1}\qquad (k\to\infty),
\end{equation*}
hence $A^{+}(\sigma)<\infty$ for any $\sigma>0$ if $\rho<1$, whereas
if $\rho=1$ then $A^{+}(\sigma)<\infty$ only for $\sigma>r$.

On substituting \eqref{eq:H04} into Taylor's expansion of the
exponential function in \eqref{eq:beta4a}, it is evident that the
corresponding coefficients $b_{k}$ in the power series expansion of
$\beta(u)$ are also positive, with $b_0=b_1=1$,
$b_2=r\myn\rho+\frac12$, etc.
\end{example}

\begin{example}\label{ex:5}
Combining the exponential form of Example \ref{ex:4} with the
generating function from Example \ref{ex:2}, for $\rho\in[0,1]$,
\,$m\in\NN$ consider
\begin{equation}\label{eq:F05}
\beta(u):=\exp\mynn\bigl\{u\myp(1+\rho\myp u)^{m-1}\bigr\}.
\end{equation}
Since $u\mapsto u\myp(1+\rho\myp u)^{m-1}$ is a polynomial of degree
$m$ with positive coefficients, it follows that the coefficients
$\{b_k\}$ in the power series expansion of the function
\eqref{eq:F05} are positive for all $k\in\ZZ_+$\myp.

From \eqref{eq:F05} by the binomial formula we obtain the expansion
\begin{equation*}
\ln\beta(u)=u\myp(1+\rho\myp
u)^{m-1}=\sum_{k=1}^{m}\binom{m-1}{k-1}\myp\rho^{k-1} u^{k},
\end{equation*}
with the expansion coefficients $a_k>0$ for $k=1,\dots,m$ and
$a_k=0$ for $k\ge m+1$. Hence, Assumption \ref{as:7.1} is satisfied
and $A^+(\sigma)<\infty$ for any $\sigma>0$.
\end{example}

\begin{example}\label{ex:6}
With $r\in(0,\infty)$, \,$\rho\in(0,1]$, consider the generating
function
\begin{equation}\label{eq:f(s)4}
\beta(u)=\left(\frac{-\ln\myn(1-\rho u)}{\rho
u}\right)^r=\left(1+\sum_{k =1}^\infty \frac{\rho^k u^{k }}{k
+1}\right)^r=:\beta_1(u)^r.
\end{equation}
If $r=m\in\NN$ then from \eqref{eq:f(s)4} it is evident that the
coefficients $\{b_k\}$ in the power series expansion of $\beta(u)$
are positive for all $k\in\ZZ_+$\myp; however, for non-integer $r>0$
this is not so clear, since the binomial expansion of $(1+t)^r$
involves negative terms. Yet the positivity of $b_k$ for $k\ge0$
holds for \emph{any real} $r>0$, which will be established below.

Let us first analyze the coefficients $\{a_k\}$ in the power series
expansion of $\ln\beta(u)=r\ln\beta_1(u)$ (see~\eqref{eq:f(s)4}).
Differentiation of the identity $r\ln\beta_1(u)=\sum_{k=1}^\infty
a_k u^k$  gives
\begin{equation}\label{eq:dif-once}
r \beta_1'(u)=\beta_1(u)\sum_{k=1}^\infty ka_k\myp u^{k-1}.
\end{equation}
Differentiating \eqref{eq:dif-once} again $k-1$ times ($k\ge1$), by
the Leibniz rule we obtain
\begin{equation}\label{eq:f'(0)}
\beta_1^{(k)}(0)=\frac{1}{r}\sum_{i=0}^{k-1} \binom{k-1}{i}\myp
\beta_1^{(k-1-i)}(0)\, (i+1)! \,a_{i+1},\qquad k\in\NN.
\end{equation}
But we know from \eqref{eq:f(s)4} that $\beta_1^{(j)}(0)=\rho^j
j!/(j+1)$ ($j\in\ZZ_+$), and so the recurrence relation
\eqref{eq:f'(0)} specializes (after some cancellations) to the
equation
\begin{equation}\label{eq:m/m}
\frac{k}{k+1}=\frac{1}{r}\sum_{i=0}^{k-1} \frac{\rho^{-i-1}
(i+1)}{k-i} \,a_{i+1}.
\end{equation}
Furthermore, denoting for short $\tilde{a}_j:=r^{-1}\rho^{-j}j a_j$
($j\in\NN$) we can simplify
\eqref{eq:m/m} to
\begin{equation}\label{eq:a_m+1}
\frac{k}{k+1}=\sum_{i=0}^{k-1}\frac{\tilde{a}_{i+1}}{k-i}\myp.
\end{equation}
Setting here $k=1,2,3,\dots$ we find successively
$$
\tilde{a}_1=\tfrac{1}{2},\quad \tilde{a}_2=\tfrac{5}{12},\quad
\tilde{a}_3=\tfrac{3}{8},\quad \tilde{a}_4=\tfrac{251}{720},\ \
\dots
$$
More generally, let us prove that
\begin{equation}\label{eq:<a<*}
\frac{1}{k\myp(k+1)}\le \tilde{a}_k\le \frac{k}{k+1}\myp,\qquad
k\in\NN.
\end{equation}
Since $\tilde{a}_1=\frac12$, the claim \eqref{eq:<a<*} is true for
$k=1$. Suppose now that the inequalities \eqref{eq:<a<*} hold for
$\tilde{a}_1,\dots, \tilde{a}_{k-1}$ ($k\ge2$), which entails that
$\tilde{a}_1,\dots, \tilde{a}_{k-1}>0$. Observe that the recurrence
\eqref{eq:a_m+1} (with $k$ replaced by $k-1$) implies
\begin{align*}
\frac{k}{k+1} &=\sum_{i=0}^{k-2}
\frac{\tilde{a}_{i+1}}{k-i}+\tilde{a}_{k}\le \sum_{i=0}^{k-2}
\frac{\tilde{a}_{i+1}}{k-1-i}+\tilde{a}_{k}=\frac{k-1}{k}+\tilde{a}_{k},
\end{align*}
and it follows that
\begin{equation}\label{eq:a<}
\tilde{a}_{k}\ge
\frac{k}{k+1}-\frac{k-1}{k}=\frac{1}{k\myp(k+1)}\myp.
\end{equation}
On the other hand, using that $\tilde{a}_1,\dots,
\tilde{a}_{k-1}>0$, from \eqref{eq:a_m+1} we also get
\begin{equation}\label{eq:a>}
\frac{k}{k+1}=\tilde{a}_{k}+\sum_{i=0}^{k-2}
\frac{\tilde{a}_{i+1}}{k-i}\ge \tilde{a}_{k}.
\end{equation}
Thus, the inequalities \eqref{eq:a<} and \eqref{eq:a>} prove the
claim \eqref{eq:<a<*} for the $\tilde{a}_k$, and by induction it is
valid for all $k\in\NN$.

For the original coefficients $a_k$, the inequalities
\eqref{eq:<a<*} are rewritten as
\begin{equation}\label{eq:<a<}
\frac{r\rho^k}{k^2\myp(k+1)}\le a_k\le\frac{r\rho^k}{k+1}\myp,\qquad
k\in\NN,
\end{equation}
and in particular $a_k>0$ for all $k\in\NN$, so that Assumption
\ref{as:7.1} is automatically satisfied due to Remark~\ref{rm:a1>0}.
Furthermore, the inequalities \eqref{eq:<a<} imply that
$A^+(\sigma)<\infty$ for any $\sigma>0$.

Finally, we can resolve the question of why the formula
\eqref{eq:f(s)4} defines a generating function with
\emph{nonnegative} coefficients: since Taylor's coefficients of the
exponential function are positive, it is evident from the relation
$\beta(u)=\exp\mynn\left\{\sum_{k=1}^\infty a_k\myp u^k\right\}$
that $b_k>0$ for all $k\in\ZZ_+$\myp.

\end{example}


\section*{Acknowledgments}
This work was supported in part by a Leverhulme Research Fellowship.
Partial support by the Hausdorff Research Institute for Mathematics
(Bonn) in the framework of Trimester Program ``Universality and
Homogeneity'' is also acknowledged.
The author is grateful to Boris Granovsky, Ilya
Molchanov, Stanislav Molchanov, Anatoly Vershik and Yuri Yakubovich
for helpful discussions, and to the anonymous referees for pointing
out an error in the original manuscript and for constructive
comments that have helped to improve the presentation.

\end{document}